\documentclass[a4paper]{amsart}

\usepackage[T1]{fontenc}
\usepackage[utf8]{inputenc}

\usepackage[english]{babel}

\usepackage{amsmath,mathtools,amssymb,extarrows,mathrsfs,amsthm}
\usepackage{tikz-cd}
\tikzcdset{scale cd/.style={every label/.append style={scale=#1},
		cells={nodes={scale=#1}}}}
\usepackage{xcolor}
\usepackage{float}
\usepackage{hyperref}
\usepackage{bm}

\setlength{\textwidth}{\paperwidth}
\addtolength{\textwidth}{-3.15in}
\calclayout

\theoremstyle{definition}
\newtheorem{defi}{Definition}[section]
\newtheorem{rem}[defi]{Remark}
\newtheorem{ex}[defi]{Example}

\theoremstyle{plain}
\newtheorem{prop}[defi]{Proposition}
\newtheorem{lemma}[defi]{Lemma}
\newtheorem{cor}[defi]{Corollary}
\newtheorem{thm}[defi]{Theorem}

\newcommand{\A}{\mathbb{A}}
\newcommand{\D}{\mathbb{D}}
\newcommand{\C}{\mathbb{C}}
\newcommand{\E}{\mathbb{E}}
\newcommand{\PP}{\mathbb{P}}

\newcommand{\R}{\mathbb{R}}
\newcommand{\Z}{\mathbb{Z}}
\newcommand{\DD}{\mathrm{D}}
\newcommand{\EE}{\mathrm{E}}
\newcommand{\RR}{\mathrm{R}}
\renewcommand{\k}{\Bbbk}
\newcommand{\cA}{\mathcal{A}}

\newcommand{\cD}{\mathcal{D}}
\newcommand{\cE}{\mathcal{E}}
\newcommand{\cF}{\mathcal{F}}
\newcommand{\cG}{\mathcal{G}}
\newcommand{\cL}{\mathcal{L}}
\newcommand{\cM}{\mathcal{M}}
\newcommand{\cN}{\mathcal{N}}
\newcommand{\cO}{\mathcal{O}}
\newcommand{\cR}{\mathcal{R}}

\newcommand{\cX}{\mathcal{X}}
\newcommand{\cY}{\mathcal{Y}}

\newcommand{\sP}{\mathsf{P}}

\newcommand{\Mod}[1]{\mathrm{Mod}(#1)}
\newcommand{\ModRc}[1]{\mathrm{Mod}_{\R\text{-}\mathrm{c}}(#1)}

\newcommand{\ModCc}[1]{\mathrm{Mod}_{\C\text{-}\mathrm{c}}(#1)}
\newcommand{\ModcohD}[1]{\mathrm{Mod}_{\mathrm{coh}}(\mathcal{D}_{#1})}
\newcommand{\ModholD}[1]{\mathrm{Mod}_{\mathrm{hol}}(\mathcal{D}_{#1})}
\newcommand{\ModrhD}[1]{\mathrm{Mod}_{\mathrm{rh}}(\mathcal{D}_{#1})}
\newcommand{\Isuban}[1]{\mathrm{I}_{\mathrm{suban}}(#1)}
\newcommand{\indlim}[1]{\mathop{``\!\varinjlim\! "}\limits_{#1}}
\newcommand{\DbD}[1]{\mathrm{D}^\mathrm{b}(\mathcal{D}_{#1})}
\newcommand{\DbrhD}[1]{\mathrm{D}^\mathrm{b}_\mathrm{rh}(\mathcal{D}_{#1})}
\newcommand{\DbholD}[1]{\mathrm{D}^\mathrm{b}_\mathrm{hol}(\mathcal{D}_{#1})}
\newcommand{\DbcohD}[1]{\mathrm{D}^\mathrm{b}_\mathrm{coh}(\mathcal{D}_{#1})}
\newcommand{\Dbk}[1]{\mathrm{D}^\mathrm{b}(\k_{#1})}

\newcommand{\DbL}[1]{\mathrm{D}^\mathrm{b}(L_{#1})}

\newcommand{\DbRck}[1]{\mathrm{D}^\mathrm{b}_{\R\text{-}\mathrm{c}}(\k_{#1})}
\newcommand{\DbRcK}[1]{\mathrm{D}^\mathrm{b}_{\R\text{-}\mathrm{c}}(K_{#1})}

\newcommand{\DbCcC}[1]{\mathrm{D}^\mathrm{b}_{\C\text{-}\mathrm{c}}(\C_{#1})}
\newcommand{\DbCck}[1]{\mathrm{D}^\mathrm{b}_{\C\text{-}\mathrm{c}}(\k_{#1})}
\newcommand{\DbIC}[1]{\mathrm{D}^\mathrm{b}(\mathrm{I}\C_{#1})}
\newcommand{\DbIk}[1]{\mathrm{D}^\mathrm{b}(\mathrm{I}\k_{#1})}

\newcommand{\EbIk}[1]{\mathrm{E}^\mathrm{b}(\mathrm{I}\k_{#1})}
\newcommand{\EzIk}[1]{\mathrm{E}^0(\mathrm{I}\k_{#1})}
\newcommand{\EbIC}[1]{\mathrm{E}^\mathrm{b}(\mathrm{I}\C_{#1})}

\newcommand{\EbIK}[1]{\mathrm{E}^\mathrm{b}(\mathrm{I}K_{#1})}
\newcommand{\EbIL}[1]{\mathrm{E}^\mathrm{b}(\mathrm{I}L_{#1})}
\newcommand{\EbRcIC}[1]{\mathrm{E}^\mathrm{b}_{\mathbb{R}\textnormal{-}\mathrm{c}}(\mathrm{I}\C_{#1})}

\newcommand{\EbRcIL}[1]{\mathrm{E}^\mathrm{b}_{\mathbb{R}\textnormal{-}\mathrm{c}}(\mathrm{I}L_{#1})}
\newcommand{\EbRcIK}[1]{\mathrm{E}^\mathrm{b}_{\mathbb{R}\textnormal{-}\mathrm{c}}(\mathrm{I}K_{#1})}
\newcommand{\EbRcIk}[1]{\mathrm{E}^\mathrm{b}_{\mathbb{R}\textnormal{-}\mathrm{c}}(\mathrm{I}\k_{#1})}
\newcommand{\EzRcIk}[1]{\mathrm{E}^0_{\mathbb{R}\textnormal{-}\mathrm{c}}(\mathrm{I}\k_{#1})}
\newcommand{\EzRcIL}[1]{\mathrm{E}^0_{\mathbb{R}\textnormal{-}\mathrm{c}}(\mathrm{I}L_{#1})}
\newcommand{\EzRcIK}[1]{\mathrm{E}^0_{\mathbb{R}\textnormal{-}\mathrm{c}}(\mathrm{I}K_{#1})}

\newcommand{\Amod}{\cA^\mathrm{mod}_{\widetilde{X}}}

\newcommand{\DR}[1]{\mathcal{DR}_{#1}}
\newcommand{\Sol}[1]{\mathcal{S}ol_{#1}}

\newcommand{\DRE}[1]{\mathcal{DR}^\EE_{#1}}

\newcommand{\SolE}[1]{\mathcal{S}ol^\EE_{#1}}

\newcommand{\RIhom}{\mathrm{R}\mathcal{I}hom}
\newcommand{\RHomE}{\RR\mathcal{H}om^\EE}
\newcommand{\DE}[1]{\mathrm{D}^\EE_{#1}}
\newcommand{\RHom}{\RR\mathcal{H}om}
\newcommand{\Hom}{\mathcal{H}om}
\newcommand{\sHom}{\mathrm{Hom}}

\newcommand{\Ihom}{\mathcal{I}hom}

\newcommand{\Db}[1]{\mathcal{D}b_{{#1}}}

\newcommand{\DbE}{\mathcal{D}b^{\mathrm{E}}_{X}}
\newcommand{\Dbt}[1]{\mathcal{D}b^\mathrm{t}_{#1}}

\newcommand{\Cinft}[1]{\mathscr{C}^{\infty,\mathrm{t}}_{#1}}
\renewcommand{\OE}[1]{\cO^\EE_{#1}}

\newcommand{\Prop}[1]{\mathrm{P}_X(#1)}

\newcommand{\XD}{{\wt{X}}}

\newcommand{\Xc}{\overline{X}}
\newcommand{\Yc}{\overline{Y}}

\newcommand{\Xsac}{X_{sa}^c}
\newcommand{\sa}{sa}

\newcommand{\beps}{\bm{\varepsilon}}

\newcommand{\defeq}{\vcentcolon=}
\newcommand{\wt}[1]{\widetilde{#1}}
\newcommand{\iso}{\simeq}
\newcommand{\conv}{\overset{+}{\otimes}}

\newcommand{\e}[2]{e^{#1}_{#2}}

\newcommand{\To}{\longrightarrow}
\newcommand{\ToPO}{\overset{+1}{\longrightarrow}}

\newcommand{\real}{\mathop{\mathrm{Re}}}

\newcommand{\id}{\mathrm{id}}
\newcommand{\supp}{\mathop{\mathrm{supp}}}
\newcommand{\singsupp}{\mathop{\mathrm{sing\,supp}}}

\newcommand{\Ltens}[1]{\overset{\mathrm{L}}{\otimes}_{#1}}
\newcommand{\LtensD}[1]{\overset{\mathrm{L}}{\otimes}_{\mathcal{D}_{#1}}}
\newcommand{\tensOD}{\overset{\mathrm{D}}{\otimes}}

\newcommand{\LtensO}[1]{\overset{\mathrm{L}}{\otimes}_{\mathcal{O}_{#1}}}

\newcommand{\piLK}[1]{\pi^{-1}L_{#1}\otimes_{\pi^{-1}K_{#1}}}

\title[Kashiwara conjugation and enhanced Riemann--Hilbert]{Kashiwara conjugation and the enhanced Riemann--Hilbert correspondence}
\author{Andreas Hohl}
\thanks{The author's research was funded by the Deutsche Forschungsgemeinschaft (DFG, German Research Foundation), Projektnummer 465657531. The author was also supported by the grant G0B3123N from the Fonds voor Wetenschappelijk Onderzoek -- Vlaanderen (FWO, Research Foundation -- Flanders).}
\address{Université Paris Cité and Sorbonne Université, CNRS, IMJ-PRG, F-75013 Paris, France\newline \text{}\quad
\textnormal{\textit{Current address:}} KU Leuven, Departement Wiskunde, Celestijnenlaan 200B, B-3001 Leuven, Belgium.}
\email{andreas.hohl@kuleuven.be}

\begin{document}
	
\begin{abstract}
	We study some aspects of conjugation and descent in the context of the irregular Riemann--Hilbert correspondence of D'Agnolo--Kashiwara. First, we give a proof of the fact that Kashiwara's conjugation functor for holonomic D-modules is compatible with the enhanced de Rham functor. Afterwards, we work out some complements on Galois descent for enhanced ind-sheaves, slightly generalizing results obtained in previous joint work with Barco, Hien and Sevenheck. Finally, we show how local decompositions of an enhanced ind-sheaf into exponentials descend to lattices over smaller fields. This shows in particular that a structure of the enhanced solutions of a meromorphic connection over a subfield of the complex numbers has implications on its generalized monodromy data (notably the Stokes matrices), generalizing and simplifying an argument given in our previous work.
\end{abstract}

	\maketitle

\section{Introduction}

In general, a Riemann--Hilbert correspondence is an equivalence of categories between some category of differential systems (such as integrable connections, meromorphic connections, or D-modules) and some category of topological objects (such as local systems, perverse sheaves, Stokes-filtered local systems, or enhanced ind-sheaves). In the context of differential equations in one or several complex variables (i.e.\ in the theory of connections or D-modules on a complex algebraic variety or complex manifold), these topological objects are a priori defined over the field of complex numbers (as their field of coefficients).

This being said, three questions immediately come to our mind:
\begin{itemize}
	\item[(1)] How does the Riemann--Hilbert functor behave with respect to the natural complex conjugation on the target?
	\item[(2)] How can we detect if the target object is already defined over a subfield of $\C$?
	\item[(3)] What implications does such a structure over a smaller field have?
\end{itemize}

The first question was studied by M.\ Kashiwara in \cite{KasConj} in the case of regular holonomic D-modules. At the time, it was known that the de Rham functor induces an equivalence between the derived category of regular holonomic D-modules and the derived category of constructible sheaves (see \cite{KasRHreg}), and in \cite{KasConj} a conjugation functor on the category of D-modules was defined that corresponds to complex conjugation on constructible sheaves via this equivalence. Around 30 years later, in \cite{DK16}, A.\ D'Agnolo and M.\ Kashiwara generalized Kashiwara's equivalence from \cite{KasRHreg} to (not necessarily regular) holonomic D-modules. It is the aim of the first part (Section~\ref{sec:KashiwaraConj}) of this article to show that the conjugation functor of \cite{KasConj} is -- not surprisingly -- still compatible with this new equivalence of categories (Theorem~\ref{prop:DREconj}). Indeed, many parts of the proof are due to important previous studies of holonomic D-modules and the Hermitian duality functor, notably by C.\ Sabbah, K.\ Kedlaya and T.\ Mochizuki.

In the second part (Section~\ref{sec:GaloisDescentEb}) of the paper, we give some complements on a study of the second question that has been initiated in \cite{BHHS22}. In loc.~cit., together with D.\ Barco, C.\ Sevenheck and M.\ Hien, we studied the question of when topological data associated to hypergeometric differential systems are defined over a subfield of $\C$. For this, we used a technique called \emph{Galois descent}, whose underlying idea is the following: Given a finite Galois extension $L/K$ and an object over $L$ (e.g.\ a sheaf of $L$-vector spaces), then in order to find a structure of this object over $K$ (i.e.\ an object over $K$ that yields our given object after extending scalars), it should be enough to find suitable isomorphisms with all its Galois conjugates. In our joint work, we developed some statements in this direction for sheaves and enhanced ind-sheaves. In particular, we showed that Galois descent is possible for $\R$-constructible enhanced ind-sheaves concentrated in one degree on compact spaces. We will reformulate and slightly generalize this statement here, dropping the compactness assumption (Theorem~\ref{thm:GaloisDescentEb}).

Finally, in the last part (Section~\ref{sec:Monodromy}), we give a consequence of the existence of $K$-structures (for an arbitrary subfield $K\subset \C$) in the irregular setting, thus addressing Question (3): We show that a $K$-structure on an enhanced ind-sheaf associated to a meromorphic connection on a Riemann surface via the irregular Riemann--Hilbert correspondence implies that its Stokes matrices (and indeed all its generalized monodromy data) can be defined over $K$. In the case of hypergeometric systems, this was already done in \cite[§5]{BHHS22}. Our proof here is, however, valid in general and simplifies significantly the quite involved and slightly artificial argument given in our previous work. To this purpose, we study the topological counterpart of meromorphic connections on complex curves (already introduced in \cite{DKmicrolocal}), which we call \emph{enhanced ind-sheaves of meromorphic normal form}, and prove that this property descends to a $K$-lattice (Theorem~\ref{thm:HLTdescent}). Our argument makes use of the theory of subanalytic sheaves.

\subsection*{Acknowledgements} I would like to thank Andrea D'Agnolo and Claude Sabbah for several helpful discussions and hints, in particular concerning the part on Hermitian duality. I also thank Pierre Schapira for some useful conversations. I am grateful to Philip Boalch for his explanations that brought new insights into the Stokes phenomenon to me. I want to thank Davide Barco, Marco Hien and Christian Sevenheck who, in many discussions related to our common work \cite{BHHS22}, inspired me to develop further results in this direction. I am also grateful to Takuro Mochizuki for answering my questions during the preparation of \cite{BHHS22}, the answers to which have inspired this work in various places. Finally, I thank the Simons Center for Geometry and Physics, Stony Brook, for hosting me in the course of the program ``The Stokes phenomenon and its Applications in Mathematics and Physics'' in May and June 2023, during which parts of the research for this work have been performed.
	
\section{Background and notation}
In this work, we want to study objects related to Riemann--Hilbert correspondences for holonomic D-modules, and we will mainly use the approach via (enhanced) ind-sheaves here, great parts of which have been developed in \cite{KS01} and \cite{DK16}. We will therefore need some analytic and topological notions, which we briefly recall in the following, referring to the existing literature for further details.

\subsection{Topological spaces and manifolds} Even if not strictly necessary in all places, we will assume all our topological spaces to be \emph{good}, i.e.\ Hausdorff, locally compact, second countable and of finite flabby dimension. This is especially important when it comes to the construction of proper direct images and exceptional inverse images, and since we are mainly interested in all these objects in the context of the Riemann--Hilbert correspondence on complex manifolds, goodness is not a restriction.

For a complex manifold $X$ with structure sheaf $\cO_X$, we can consider its \emph{complex conjugate} manifold $\Xc$, having the same underlying topological space, but the structure sheaf $\cO_{\Xc}$ is the sheaf of antiholomorphic functions (with respect to the original complex structure). Also, $X$ has an underlying real analytic manifold, which is the same as that of $\overline{X}$. It is often denoted by $X_\R$ to emphasize the real structure, but we will mostly just denote it by $X$ if it is clear from the context. Note also that a morphism $f\colon X\to Y$ of complex manifolds induces a morphism of complex manifolds $\overline{f}\colon \overline{X}\to\overline{Y}$ (which is the same as a morphism of the underlying topological spaces).

\subsection{Holonomic D-modules}\label{sec:Holonomic}
We briefly recall some notions and notation from the theory of D-modules, and we refer to standard references such as \cite{Bjo}, \cite{HTT} and \cite{KasDmod} for details. We mostly use the notation from the last of these three references.

Let $X$ be a complex manifold with structure sheaf $\cO_X$. We denote by $\cD_X$ the sheaf of (non-commutative) rings of linear partial differential operators with coefficients in $\cO_X$ on $X$. The category of (left) $\cD_X$-modules is denoted by $\Mod{\cD_X}$, and its bounded derived category by $\DbD{X}$. We denote by $\ModcohD{X}$ the full subcategory of \emph{coherent} $\cD_X$-modules, by $\ModholD{X}$ the full subcategory of \emph{holonomic} $\cD_X$-modules, and by $\ModrhD{X}$ the full subcategory of \emph{regular holonomic} $\cD_X$-modules.
We moreover denote by $\DbrhD{X}$ (resp.\ $\DbholD{X}$, $\DbcohD{X}$) the full subcategory of $\DbD{X}$ of complexes with cohomologies in $\ModrhD{X}$ (resp.\ $\ModholD{X}$, $\ModcohD{X}$).

We write $\D_X$ for the duality functor for $\cD_X$-modules, and if $f\colon X\to Y$ is a morphism, we denote by $\DD f_*$ (resp.\ $\DD f^*$) the direct image for $\cD_X$-modules (resp.\ the inverse image for $\cD_Y$-modules) along $f$.

If $D\subset X$ is a hypersurface, we denote by $\cO_X(*D)$ the sheaf of meromorphic functions with poles on $D$ at most. It is a left $\cD_X$-module and for $\cM\in\DbholD{X}$ we denote its \emph{localization at $D$} by $\cM(*D)\vcentcolon= \cM\otimes_{\cO_X} \cO_X(*D)$. We also introduce the notation  $\cM(!D)\defeq \D_X ((\D_X\cM)(*D))$. We call $\cM\in\ModholD{X}$ a \emph{meromorphic connection} along $D$ if $\singsupp(\cM) = D$ and $\cM(*D)\iso\cM$, where $\singsupp(\cM)$ denotes the singular support of $\cM$.

An important basic example of a meromorphic connection which is not regular holonomic is the following: Let $X$ be a complex manifold, $D\subset X$ a normal crossing divisor and $\varphi\in\Gamma(X;\cO_X(*D))$, then the \emph{exponential $\cD_X$-module} $\cE^\varphi$ is defined by $\cE^\varphi\vcentcolon= (\cD_X/\mathrm{ann}(\varphi))(*D)$, where $\mathrm{ann}(\varphi)$ is the (left) ideal of $\cD_X$ of operators annihilating $\varphi$.
	
The classification of holonomic D-modules -- in particular in the case of irregular singularities in higher dimensions -- has been a difficult problem. In the one-dimensional case, it turned out that the Stokes phenomenon is the key ingredient to achieve a complete description of connections with irregular singular points. Very roughly, the Stokes phenomenon in this context is the observation that a meromorphic connection decomposes as a direct sum of certain ``elementary'' connections (namely, exponential D-modules) in sufficiently small sectors with vertex at the singular point, but this decomposition might not exist globally in an open neighbourhood of the singularity. It was conjectured that a similar statement holds for meromorphic connections in higher dimensions, i.e.\ that such local decompositions still exist (as in Definition~\ref{def:normalform} below), at least after suitable blow-ups of the singular locus and de-ramification. A proof was given under certain assumptions in the two-dimensional case in \cite{SabAst}, and finally for the general case by K.\ Kedlaya \cite{Ked1,Ked2} and T.\ Mochizuki \cite{Moc1,Moc2} (see also \cite{Maj} for the asymptotic analysis).

This classification leads to a useful technique for proving statements about holonomic D-modules, and we will briefly recall it here for later use. If $X$ is a complex manifold and $D\subset X$ a normal crossing divisor, we denote by $\XD$ the real oriented blow-up of $X$ along the components of $D$, and we denote by $\Amod$ the sheaf of functions which are holomorphic on $\XD\setminus \partial\XD$ having moderate growth along $\partial \XD$ (see e.g.\ \cite[Notation~7.2.1]{DK16} for a more precise definition, where this sheaf is denoted by $\cA_{\XD}$, or also \cite[§4.1.2]{MocBetti}).
For a $\cD_X$-module $\cM$, we write
$$\cM^{\cA}\vcentcolon= \Amod\otimes_{\varpi^{-1} \cO_X} \varpi^{-1}\cM.$$
Note that $(\cM(*D))^\cA\iso \cM^\cA$ (cf.\ \cite[Lemma~7.2.2]{DK16}).

We give the following definition of a good normal form, combining elements from \cite[Definition 7.3.3]{DK16} and \cite[§2.1]{SabAst} (cf.\ also \cite{Ito}).
\begin{defi}[{cf.\ \cite[Definition 7.3.3]{DK16}}]\label{def:normalform}
	Let $X$ be a complex manifold and $D\subset X$ a normal crossing divisor. A holonomic $\cD_X$-module $\cM\in\ModholD{X}$ is said to have a \emph{good normal form along $D$} if
	\begin{itemize}
		\item $\cM\iso\cM(*D)$,
		\item $\singsupp \cM = D$,
		\item for any $p\in D$, there exists an open neighbourhood $U\subset X$ and a finite good set of functions $\{\varphi_i\mid i\in I\}\subset \Gamma(U;\cO_X(*D))$ such that any $x\in \varpi^{-1}(p)\subset \partial \XD$ has an open neighbourhood $V\subset \varpi^{-1}(U)\subset \XD$ on which there exists an isomorphism
		$$(\cM^\cA)|_V \iso \Big(\bigoplus_{i\in I} \big(\cE^{\varphi_i}\big)^\cA\Big)\Big|_V.$$
		Here, the set $\{\varphi_i\mid i\in I\}$ is called \emph{good} if the divisors associated to the functions $\varphi_i$ and $\varphi_i-\varphi_j$ that are not elements of $\Gamma(U;\cO_X)$, for $i,j\in I$, $i\neq j$, are supported on $D$ and non-positive.
	\end{itemize}
\end{defi}

Using this notion of good normal form, one can deduce from the above-mentioned classification of holonomic D-modules the following fundamental lemma. (This is a slight variation of \cite[Lemma 7.3.7]{DK16}).

\begin{lemma}[{cf.\ \cite[Lemma 7.3.7]{DK16}}]\label{lemma:classif}
	Let $\mathrm{P}_X(\cM)$ be a statement concerning a complex manifold $X$ and an object $\cM\in\DbholD{X}$. Then, $\mathrm{P}_X(\cM)$ is true for any complex manifold $X$ and any $\cM\in \DbholD{X}$ if all of the following conditions are satisfied:
	\begin{itemize}
		\item[(a)] Locality: If $X=\bigcup_{i\in I} U_i$ is an open covering, then $\mathrm{P}_X(\cM)$ is true if and only if $\mathrm{P}_{U_i}(\cM|_{U_i})$ is true for every $i\in I$.
		\item[(b)] Stability by translation: If $n\in \Z$ and $\mathrm{P}_X(\cM)$ is true, then $\mathrm{P}_X(\cM[n])$ is true.
		\item[(c)] Stability in exact triangles: If $\cM'\to\cM\to\cM''\ToPO$ is a distinguished triangle in $\DbholD{X}$ and both $\mathrm{P}_X(\cM')$ and $\mathrm{P}_X(\cM'')$ are true, then $\mathrm{P}_X(\cM)$ is true.
		\item[(d)] Stability by direct summands: If $\cM,\cM'\in\ModholD{X}$ and $\mathrm{P}_X(\cM\oplus \cM')$ is true, then $\mathrm{P}_X(\cM)$ is true.
		\item[(e)] Stability by projective pushforward: If $f\colon X\to Y$ is a projective morphism, $\cM\in\ModholD{X}$ and $\mathrm{P}_X(\cM)$ is true, then $\mathrm{P}_Y(\DD f_*\cM)$ is true.
		\item[(f)] If $\cM\in \ModholD{X}$ has a good normal form along a normal crossing divisor $D\subset X$, then $\mathrm{P}_X(\cM)$ is true.
	\end{itemize}
\end{lemma}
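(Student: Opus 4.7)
The plan is to follow the dévissage strategy used in the proof of \cite[Lemma~7.3.7]{DK16}. First, the standard truncation triangles $\tau^{\le n}\cM\To\cM\To\tau^{>n}\cM\ToPO$ combined with (b) and (c) reduce the problem to proving $\Prop{\cM}$ for a single module $\cM\in\ModholD{X}$. By (a), one may further restrict to an open coordinate neighborhood of any chosen point of $X$.

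I would then proceed by induction on $d\defeq\dim\singsupp(\cM)$. The case $d<0$, where $\cM$ is an integrable connection on $X$, is handled directly by (f) with the empty divisor. For the inductive step, set $D\defeq\singsupp(\cM)$ and consider the distinguished triangle
\[
\mathrm{R}\Gamma_{[D]}\cM \To \cM \To \cM(*D) \ToPO
\]
whose first term is supported on $D$. A Kashiwara-type dévissage along a resolution of $D$ reduces the assertion for $\mathrm{R}\Gamma_{[D]}\cM$ to that of a holonomic D-module with singular support of dimension strictly smaller than $d$, and the induction hypothesis together with (a) and (c) dispose of this term. We are thus reduced to the case $\cM\iso\cM(*D)$, that is, $\cM$ is a meromorphic connection along $D$.

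For such an $\cM$, the theorem of Kedlaya--Mochizuki provides a composition $g=f\circ\pi\colon X''\xrightarrow{\pi}X'\xrightarrow{f}X$ of a projective birational modification $f$ and a finite ramified cover $\pi$, together with a normal crossing divisor $D''\defeq g^{-1}(D)\subset X''$, such that $(\DD g^*\cM)(*D'')$ has a good normal form along $D''$. Condition (f) then yields $\mathrm{P}_{X''}\bigl((\DD g^*\cM)(*D'')\bigr)$, and (e) applied to $g$ (which is projective) gives the corresponding property on $X$ for the pushforward $\DD g_*(\DD g^*\cM)(*D'')$. A trace/adjunction argument then exhibits $\cM$ as a direct summand of this pushforward modulo holonomic modules whose singular support is strictly contained in $D$, to which the inductive hypothesis applies via (c). Property (d) then concludes $\Prop{\cM}$.

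The main obstacle is precisely this last descent step: producing, at the level of holonomic D-modules, the direct-summand decomposition that recovers $\cM$ from $\DD g_*\DD g^*\cM$ up to modules of strictly smaller singular-support dimension, by combining the projection formula for the projective morphism $f$ with the trace of the ramified cover $\pi$. Once this splitting is in hand, the rest of the dévissage is formal, and the inductive scheme on $d$ closes.
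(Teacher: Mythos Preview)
The paper does not give its own proof of this lemma; it is stated as a slight variation of \cite[Lemma~7.3.7]{DK16} and the argument is deferred entirely to that reference. Your outline reproduces precisely the d\'evissage strategy of loc.~cit.\ (reduction to a single holonomic module, induction via the localization triangle, Kedlaya--Mochizuki to reach good normal form after modification and ramification, and the trace/direct-summand descent), so there is nothing to compare---the approaches coincide.
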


\subsection{Sheaves}
Let $\k$ be a field and let $X$ be a topological space.
We denote by $\Mod{\k_X}$ the category of sheaves of $\k$-vector spaces on $X$ and by $\Dbk{X}$ its bounded derived category. For a $\k$-vector space $V$, denote by $V_X$ the constant sheaf on $X$ with stalk $V$, and an object $\cF\in\Mod{\k_X}$ is called a \emph{local system} (of $\k$-vector spaces) if it is locally isomorphic to a constant sheaf.

If $X$ is a real analytic (resp.\ complex) manifold, we have the full subcategory $\ModRc{\k_X}$ (resp.\ $\ModCc{\k_X}$) of $\Mod{\k_X}$ consisting of $\R$-constructible (resp.\ $\C$-constructible) sheaves and the full subcategory $\DbRck{X}$ (resp.\ $\DbCck{X}$) of $\Dbk{X}$ consisting of complexes with $\R$-constructible (resp.\ $\C$-constructible) cohomologies.

The six Grothendieck operations for sheaves are denoted by $\RR\Hom$, $\otimes$, $\RR f_*$, $f^{-1}$, $\RR f_!$ and $f^!$ for $f\colon X\to Y$ a morphism.

For details on the theory of sheaves of vector spaces and constructibility, we refer to the standard literature, such as \cite{KS90} or \cite{Dimca}.

\subsection{Ind-sheaves and subanalytic sheaves}
Let $X$ be a good topological space.
In \cite{KS01}, M.\ Kashiwara and P.\ Schapira introduced and studied the category  $\mathrm{I}(\k_X)$ of \emph{ind-sheaves} on $X$ as a generalization of the category $\Mod{\k_X}$, and its bounded derived category $\DbIk{X}$. There is a fully faithful and exact embedding $\iota\colon \Mod{\k_X}\hookrightarrow \mathrm{I}(\k_X)$. Since this embedding does not commute with inductive limits in general but the functor $\iota$ is sometimes suppressed in the notation, one uses the notation $\indlim{}$\vspace{-0.25cm} (instead of just $\varinjlim$) for the inductive limit in the category of ind-sheaves. The functor $\iota$ has an exact left adjoint $\alpha\colon \mathrm{I}(\k_X)\to \Mod{\k_X}$, and this functor admits a fully faithful and exact left adjoint giving another embedding $\beta\colon \Mod{\k_X}\hookrightarrow \mathrm{I}(\k_X)$.

It was also shown that if $X$ is a real analytic manifold, the category of ``subanalytic ind-sheaves'' (called ``ind-$\R$-constructible ind-sheaves'' in \cite{KS01}) is equivalent to the category of sheaves on the (relatively compact) subanalytic site $\Xsac$ (see \cite[Theorem 6.3.5]{KS01} and cf.\ \cite{Pre} for a more detailed study of subanalytic sheaves).

Let us briefly recall this version of the subanalytic site: Open sets of $\Xsac$ are relatively compact open subanalytic subsets of $X$. A covering of an open subset $U$ in $\Xsac$ is a covering $U=\bigcup_{i\in I} U_i$ by open sets $U_i$ in $\Xsac$ admitting a finite subcover. For a more detailed study of subanalytic sets, we refer to \cite{BM88} 	and the references therein.
We denote the subcategory of $\mathrm{I}(\k_X)$ consisting of subanalytic ind-sheaves by $\Isuban{\k_X}$.

In some regards, subanalytic sheaves behave differently from sheaves on a usual topology. For example, if we are given a filtrant inductive system $F_j\in\Isuban{\k_X}$, $j\in J$, and $U\in \Xsac$, we have
$$\Gamma(U;\indlim{j\in J} F_j)\iso \varinjlim\limits_{j\in J} \Gamma(U;F_j).$$

The following lemma is also easily proved using the finiteness of the gluing procedure for subanalytic sheaves (see \cite[Proposition~6.4.1]{KS01}).
\begin{lemma}\label{lemma:latticeSectionsSuban}
	Let $F\in \Isuban{\k_X}$ and let $A$ be a $\k$-algebra. Then for $U\subseteq X$ a relatively compact subanalytic open subset, we have $\Gamma(U;A_X\otimes_{\k_X} F)\iso A\otimes_{\k} \Gamma(U;F)$.
\end{lemma}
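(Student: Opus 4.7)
The plan is to reduce to the case of finite-dimensional $\k$-vector spaces $A$ by writing an arbitrary $A$ as the filtered colimit of its finite-dimensional $\k$-subspaces. The proof then combines two ingredients: that the tensor product of ind-sheaves commutes with filtered inductive limits in either variable, and the formula $\Gamma(U;\indlim{j\in J}F_j)\iso\varinjlim_{j\in J}\Gamma(U;F_j)$ for subanalytic ind-sheaves on $U\in \Xsac$, which is precisely the manifestation of the finite gluing (\cite[Proposition~6.4.1]{KS01}) recalled just before the lemma.

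Concretely, I would let $\{A_i\}_{i\in I}$ denote the filtered system of finite-dimensional $\k$-subspaces of $A$, so that $A\iso\varinjlim_{i\in I}A_i$ in $\Vect{\k}$. This induces an isomorphism $A_X\iso\indlim{i\in I}(A_i)_X$ in $\Isuban{\k_X}$, and after tensoring with $F$ one obtains
$$A_X\otimes_{\k_X}F\iso\indlim{i\in I}(A_i)_X\otimes_{\k_X}F.$$
Applying $\Gamma(U;-)$ and invoking the formula above yields
$$\Gamma(U;A_X\otimes_{\k_X}F)\iso\varinjlim_{i\in I}\Gamma\bigl(U;(A_i)_X\otimes_{\k_X}F\bigr).$$

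For each fixed $i$, since $A_i$ is finite-dimensional a choice of basis identifies $(A_i)_X\otimes_{\k_X}F$ with a finite direct sum of copies of $F$; sections commute with such finite sums, giving $\Gamma(U;(A_i)_X\otimes_{\k_X}F)\iso A_i\otimes_\k\Gamma(U;F)$. The only mild subtlety is the functoriality in $A_i$, which can be handled by observing that the two functors $V\mapsto\Gamma(U;V_X\otimes_{\k_X}F)$ and $V\mapsto V\otimes_\k\Gamma(U;F)$ from the category of finite-dimensional $\k$-vector spaces to $\k$-modules are both exact and agree on $\k$, hence canonically isomorphic. Finally, passing to the filtered colimit and using that $\otimes_\k$ commutes with filtered colimits gives
$$\varinjlim_{i\in I}A_i\otimes_\k\Gamma(U;F)\iso A\otimes_\k\Gamma(U;F),$$
which combined with the previous isomorphisms yields the claim. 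The principal (and only) step that genuinely uses the subanalytic setting is the commutation $\Gamma(U;\indlim{}(-))\iso\varinjlim\Gamma(U;-)$, which would fail in the usual topology but holds here because $U$ is relatively compact subanalytic and all coverings in $\Xsac$ admit finite subcoverings.
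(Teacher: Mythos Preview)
Your proof is correct and follows essentially the same approach the paper indicates: the paper only says the lemma ``is easily proved using the finiteness of the gluing procedure for subanalytic sheaves (see \cite[Proposition~6.4.1]{KS01})'', and you have spelled out exactly how this works, via the commutation $\Gamma(U;\indlim{}(-))\iso\varinjlim\Gamma(U;-)$ (stated in the paper just before the lemma) combined with a reduction to finite-dimensional $A_i$.
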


Some important examples of ind-sheaves are the following: On a real analytic manifold $X$, one has the ind-sheaf $\Cinft{X}$ of tempered complex-valued smooth functions (i.e.\ smooth functions with moderate growth near the boundary of their domain) and the ind-sheaf $\Dbt{X}$ of tempered complex-valued distributions (i.e.\ distributions extending to the whole space). If $X$ is a complex manifold, it is in particular a real analytic manifold, and these two ind-sheaves are modules over $\beta\cD_X$ and $\beta\cD_{\Xc}$. Their Dolbeault complexes are isomorphic and one defines them to be the complex of tempered holomorphic functions
\begin{equation}\label{eq:DefOt}
\cO^{\mathrm{t}}_X\vcentcolon= \RIhom_{\beta\cD_{\Xc}}(\beta\cO_{\Xc},\Cinft{X})\iso \RIhom_{\beta\cD_{\Xc}}(\beta\cO_{\Xc},\Dbt{X})
\end{equation}
(see \cite[§7.2]{KS01}).

\subsection{Enhanced ind-sheaves}\label{sec:enhanced}
In \cite{DK16} and \cite{DK19}, A.\ D'Agnolo and M.\ Kashiwara extended the theory further, introducing and studying the category of \emph{enhanced ind-sheaves} on a so-called \emph{bordered space}. We recall very few basics here and refer to loc.~cit.\ for more details (see also \cite{KS16} for an exposition).

A bordered space $\cX=(X,\widehat{X})$ is a pair of good topological spaces such that $X\subseteq \widehat{X}$ is an open subspace. In particular, every good topological space $X$ can be considered a bordered space $(X,X)$. A morphism $f\colon (X,\widehat{X})\to(Y,\widehat{Y})$ is a continuous map $X\to Y$ such that the morphism $\overline{\Gamma}_f\to \widehat{X}$ induced by the projection to the first factor is proper. Here, $\overline{\Gamma}_f$ denotes the closure in $\widehat{X}\times\widehat{Y}$ of the graph $\Gamma_f\subset X\times Y$. A morphism is called \emph{semi-proper} if also the morphism $\overline{\Gamma}_f\to\widehat{Y}$ induced by the second projection is proper.

We say that $\cX$ is a \emph{real analytic bordered space} if $\widehat{X}$ is a real analytic manifold and $X\subseteq \widehat{X}$ is a subanalytic open subset. Morphisms of real analytic bordered spaces are additionally required to be subanalytic in the sense that $\Gamma_f$ is a subanalytic subset of $\widehat{X}\times\widehat{Y}$.
(Everything we do for real analytic manifolds or bordered spaces could also be done slightly more generally for subanalytic spaces or bordered spaces, see e.g.\ \cite[Exercise~9.2]{KS90} and \cite[§3.1]{DK19} for these notions.)

For the rest of this subsection, let $\cX=(X,\widehat{X})$ be a real analytic bordered space.

The category $\EbIk{\cX}$ of \emph{enhanced ind-sheaves} on $\cX=(X,\widehat{X})$ is a quotient category of $\DbIk{\widehat{X}\times\overline{\R}}$, where $\overline{\R}=\R\sqcup\{\pm\infty\}$. To be more precise, it is constructed in two steps:
First, for any bordered space $\cY=(Y,\widehat{Y})$, one sets
$$\DbIk{\cY} \defeq \DbIk{\widehat{Y}}/\DbIk{\widehat{Y}\setminus Y}.$$
This type of category comes equipped with six Grothendieck operations. One then defines
$$\EbIk{\cX}\defeq \DbIk{\cX\times\R_\infty}/\pi_\cX^{-1}\DbIk{\cX},$$
where one sets $\R_\infty\vcentcolon=(\R,\overline{\R})$, so $\cX\times\R_\infty=(X\times\R,\widehat{X}\times \overline{\R})$ and $\pi_\cX\colon \cX\times\R_\infty\to \cX$ is given by the projection $X\times\R\to X$.

The category $\EbIk{\cX}$ has six Grothendieck operations denoted by $\RR\Ihom^+$, $\conv$, $\EE f_*$, $\EE f^{-1}$, $\EE f_{!!}$, and $\EE f^!$ (for morphisms $f$ of bordered spaces). One also has a sheaf-valued hom functor $$\RHomE\colon \EbIk{\cX}\times\EbIk{\cX} \to \Dbk{X}$$
and for any $\cF\in\Dbk{X}$ a tensor product functor
$$\EbIk{\cX} \to \EbIk{\cX}, \quad H \mapsto \pi^{-1}\cF \otimes H.$$
Here, $\pi\colon X\times\R\to X$ is the projection (and $\pi^{-1}\cF$ is to be seen as extended by zero to $\widehat{X}\times\overline{\R}$).

The natural t-structure on the derived category $\DbIk{\widehat{X}\times\overline{\R}}$ induces a t-structure on $\EbIk{\cX}$, whose heart is denoted by $\EzIk{\cX}$ (these are therefore the objects represented by complexes of ind-sheaves concentrated in degree $0$).
Moreover, a notion of $\R$-constructibility is defined for enhanced ind-sheaves, leading to the full subcategory $\EbRcIk{\cX}\subset \EbIk{\cX}$. We write $\EzRcIk{\cX}\vcentcolon= \EzIk{\cX}\cap \EbRcIk{\cX}$.

An important object in $\EbIk{\cX}$ is
$$\k^\EE_\cX \vcentcolon= \indlim{a\to\infty} \k_{\{t\geq a\}}\in\EzRcIk{\cX},$$
where $\{t\geq a\}\vcentcolon= \{(x,t)\in \widehat{X}\times\overline{\R}\mid x\in X, t\in\R, t\geq a\}\subset \widehat{X}\times\overline{\R}$.

One has an embedding of the category of sheaves on $X$ into the category of enhanced ind-sheaves on $\cX$
$$\e{\k}{\cX}\colon \Dbk{X} \hookrightarrow \EbIk{\cX}, \quad \cF\mapsto \pi^{-1}\cF\otimes \k^\EE_{\cX}.$$
One obtains a natural duality functor for enhanced ind-sheaves, which is given as
$$\DE{\cX}\colon \EbIk{\cX}^\mathrm{op}\to \EbIk{\cX}, \quad H \mapsto \RIhom^+(H,\e{\k}{\cX}(\omega_X)),$$
where $\omega_X\in\Dbk{X}$ is the dualizing complex of $X$.

A fundamental class of objects is that of so-called \emph{exponential enhanced ind-sheaves}:
If $W\subseteq X$ is open such that $W\subseteq \widehat{X}$ is subanalytic, and if $f\colon W\to \R$ is a continuous subanalytic function, we set
$$\E^f_{W|\cX,\k} \vcentcolon= \indlim{a\to\infty} \k_{\{t\geq -f + a\}}\iso \k^\EE_{\cX} \conv \k_{\{t=-f\}}\in\EzRcIk{\cX},$$
where $\{t\geq -f+a\}\vcentcolon= \{(x,t)\in \widehat{X}\times \overline{\R} \mid x\in W, t\in \R, t \geq -f(x)+a \}\subset \widehat{X}\times\overline{\R}$, and similarly one defines $\{t=-f\}\subset \widehat{X}\times\overline{\R}$.
If $\k=\C$ or if there is no confusion about the field of coefficients, we often omit the subscript $\k$ and just write $\E^f_{W|\cX}$ for this object.

\begin{rem}\label{rem:EfSuban}
	For simplicity, we assume $\widehat{X}=X$.
	The category $\EbIk{X}$ can be viewed (via the left adjoints of the quotient functors) as a full subcategory of $\DbIk{X\times\overline{\R}}$. In particular, the image of $\EzRcIk{X}$ lies in $\Isuban{\k_{X\times\overline{\R}}}$. Hence, the object $\E^f_{W|X,\k}$ can be understood as a sheaf on the subanalytic site $(X\times\overline{\R})^c_{\sa}$.
	
	If $X$ is a real analytic manifold, $W\subset X$ an open subanalytic subset and $f\colon W\to \R$ a continuous subanalytic function, then $\E^f_{W|X}=\E^f_{W|X,\k}$ is described as follows (we do not write the index $\k$ here for better readability):
	
	Let $U\in \mathrm{Op}((X\times\overline{\R})^c_{\sa})$.
	\begin{itemize}
		\item If $U\subseteq W\times\R$,
		$$\E^f_{W|X}(U)\iso \varinjlim_{a\to\infty} \Gamma(U;\k_{\{t\geq -f+a\}})\iso \varinjlim_{a\to\infty} \k^{\pi_0(U\cap\{t\geq- f+a\})},$$
		where $\pi_0(U\cap\{t\geq-f+a\})$ is the set of connected components of the intersection $U\cap \{t\geq -f+a\}$ (note that the number of connected components is finite since this intersection is subanalytic and relatively compact).
		\item More generally, if $U\subseteq X\times\overline{\R}$, then
		$$\E^f_{W|X}(U)\iso \varinjlim_{a\to\infty} \k^{\pi_0^{<\infty}(U\cap\{t\geq-f+a\})},$$
		where $\pi_0^{<\infty}(U\cap\{t\geq-f+a\})$ is the set of connected components $Y$ of the intersection $U\cap \{t\geq -f+a\}$ such that $\overline{Y}\cap U$ does not intersect $X\times\{+\infty\}$ and $(X\setminus W)\times \overline{\R}$.
	\end{itemize}
	In particular, we have $\E^f_{W|X}(W\times\R)=\k$ and $\E^f_{W|X}(S\times\overline{\R})=0$ for $S\subseteq X$ open, as well as $\E^f_{W|X}(X\times I)=0$ for $I\subseteq \overline{\R}$ open if $W\neq X$.
	For $V\subseteq U$, the restriction maps $\E^f_{W|X}(U)\to\E^f_{W|X}(V)$ are given in the obvious way, by a combination of identities, zero maps or diagonal maps $\k\to \k^M$ (for $M$ a set).
	
	The sheaves $\E^f_{W|X}$ are therefore very similar to sheaves of the form $\k_Z$ on the usual topology, i.e.\ constant sheaves on a closed subset $Z\subseteq X$, extended by zero outside $Z$. For $\E^f_{W|X}$, the set $Z$ is to be thought of as a half-space lying over the graph of some real-valued function and considered to be shifted towards ``infinitely high real values''.
	
	A typical example is the case where $X$ is a disc around a point $p$ in the complex plane, $W=X\setminus\{p\}$ or $W=S$ an open sector with vertex $p$, and $f=\real \varphi$ is the real part of a holomorphic function $\varphi\colon W\to \C$ with a pole at $p$.
\end{rem}

We recall the following definition (note that the conventions are different between \cite{MocCurveTest2} and \cite{DKmicrolocal}, and we use the latter one).
\begin{defi}[{cf.\ e.g.\ \cite[Notation~3.2.1]{DKmicrolocal}}]
	Let $\cX$ be a bordered space. Let $W\subseteq X$ be an open subset and let $f,g\colon W\to\R$ be continuous functions. Then we write
	\begin{align*}
		f\sim_W g \;\; \vcentcolon\Leftrightarrow\;\; &f-g \text{ is bounded on any $V\subseteq W$, $V\subset\subset \widehat{X}$},\\
		f\preceq_W g \;\; \vcentcolon\Leftrightarrow\;\;  &f-g \text{ is bounded from above on any $V\subseteq W$, $V\subset\subset \widehat{X}$},\\
		f\prec_W g \;\; \vcentcolon\Leftrightarrow\;\;  &f\preceq_W g \text{ but not } f\sim_W g,\\
		&\text{i.e.\ $f-g$ bounded above on each $V$ but unbounded below on some $V$.}
	\end{align*}
	Here, $V\subset\subset\widehat{X}$ means ``$V$ is relatively compact in $\widehat{X}$''.
	
	If $X$ is a complex manifold and $\varphi,\psi\colon W\to \C$ are holomorphic, we will write for short $\varphi\preceq_W\psi$ instead of $\real\varphi \preceq_W \real\psi$ (and similarly for $\sim_W$ and $\prec_W$).
\end{defi}

Note that if $f\sim_{W}g$, then $\E^f_{W|\cX}\iso \E^g_{W|\cX}$.

To end this subsection, let us also recall the following fundamental property (cf.\ e.g.\ \cite[Lemma~9.8.1]{DK16} or \cite[Lemma~3.2.2]{DKmicrolocal}). It can directly be proved using \cite[Proposition~4.7.9]{DK16}.
\begin{lemma}\label{lemma:homSumE}
	Let $\cX$ be a bordered space, let $W\subseteq X$ be open and relatively compact in $\widehat{X}$, and let $f_1,\ldots,f_n, g_1,\ldots, g_m\colon W\to\R$ be continuous subanalytic functions. Then
	\begin{align*}
	\mathrm{Hom}_{\EbIk{\cX}}&\Big(\bigoplus_{k=1}^n \E^{f_k}_{W|\cX}, \bigoplus_{j=1}^m \E^{g_j}_{W|\cX} \Big)\\
	 &\iso  \Big\{ A=(a_{jk})\in \k^{m\times n} \Big| a_{jk}=0 \text{ if } f_k \prec_{W'} g_j \text{ for some open } W'\subseteq W \Big\}.
	 \end{align*}
\end{lemma}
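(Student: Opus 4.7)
My plan is to reduce by bilinearity to the case of a single pair of exponents and then to apply \cite[Proposition~4.7.9]{DK16} to the resulting Hom computation, which is the technical input that handles the quotient-category subtleties.

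First, since $\mathrm{Hom}_{\EbIk{\cX}}(-,-)$ commutes with finite direct sums in each variable, one has
\[
\mathrm{Hom}_{\EbIk{\cX}}\Big(\bigoplus_{k=1}^n \E^{f_k}_{W|\cX}, \bigoplus_{j=1}^m \E^{g_j}_{W|\cX}\Big) \iso \bigoplus_{j,k} \mathrm{Hom}_{\EbIk{\cX}}(\E^{f_k}_{W|\cX}, \E^{g_j}_{W|\cX}),
\]
and the $\k$-module of admissible matrices $(a_{jk})$ on the right-hand side of the claim decomposes in the same fashion into one-dimensional pieces, one for each position $(j,k)$ where the vanishing condition does not apply. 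It therefore suffices to show, for a single pair of continuous subanalytic functions $f,g\colon W\to\R$, that $\mathrm{Hom}_{\EbIk{\cX}}(\E^f_{W|\cX},\E^g_{W|\cX})$ is isomorphic to $\k$ when no open $W'\subseteq W$ satisfies $f\prec_{W'}g$, and vanishes otherwise.

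For the single-pair statement I would use the description $\E^f_{W|\cX}\iso \k^\EE_\cX\conv\k_{\{t=-f\}}$ (and similarly for $g$) and apply \cite[Proposition~4.7.9]{DK16} directly. That proposition computes morphism spaces between enhanced ind-sheaves of the form $\k^\EE_\cX\conv F$, $\k^\EE_\cX\conv G$ in the localized category $\EbIk{\cX}$, identifying them with data controlled by the geometry of the relevant half-spaces. Applied to $F=\k_{\{t=-f\}}$, $G=\k_{\{t=-g\}}$, it reduces our Hom to a statement about the existence of inclusions $\{t\geq -g+b\}\subseteq \{t\geq -f+a\}$ in $\widehat{X}\times\overline{\R}$ over relatively compact subsets of $W$ for $a,b\to +\infty$.

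Finally, the geometric translation: an inclusion as above on a relatively compact $V\subseteq W$ is equivalent to the uniform bound $f-g\leq a-b$ on $V$. If $f\prec_{W'}g$ on some open $W'\subseteq W$, then $f-g$ is bounded above but unbounded below on some $V\subseteq W'$, obstructing any uniform bound and hence forcing the scalar to vanish. Conversely, if no such $W'$ exists, the collection of compatible scalars glues to a well-defined nonzero morphism in $\EbIk{\cX}$, producing a one-dimensional Hom space. The main subtle point—and the reason \cite[Proposition~4.7.9]{DK16} is needed rather than a naive direct computation—is that the quotient $\EbIk{\cX}=\DbIk{\cX\times\R_\infty}/\pi_\cX^{-1}\DbIk{\cX}$ can both suppress and create morphisms relative to the ambient ind-sheaf category, so one must verify that exactly the scalars described here survive the localization.
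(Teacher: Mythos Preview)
Your approach is the same as the paper's: the paper states only that the lemma ``can directly be proved using \cite[Proposition~4.7.9]{DK16}'', which is exactly the reduction you carry out (bilinearity to single pairs, then the enhanced Hom computation via that proposition). One small slip: the inclusion $\{t\geq -g+b\}\subseteq \{t\geq -f+a\}$ is equivalent to the \emph{lower} bound $f-g\geq a-b$, not $f-g\leq a-b$; your conclusion that $f\prec_{W'}g$ (so $f-g$ unbounded below) obstructs the morphism is still correct once this is fixed, so the argument goes through unchanged.
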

Morphisms between direct sums of exponentials can therefore -- after numbering the direct summands -- be represented by matrices (and composition corresponds to matrix multiplication). In particular, if $f_1\prec_W f_2\prec_W \ldots \prec_W f_n$, then endomorphisms of $\bigoplus_{i=1}^n \E^{f_i}_{W|\cX}$ are represented by upper-triangular square matrices.

\subsection{Riemann--Hilbert correspondences for analytic D-modules}
It is a classical idea to ask if the functor associating to a differential system its solution space is an equivalence. (The question about surjectivity of such a functor for Fuchsian differential equations goes back at least to Hilbert's 21st problem.)

Let $X$ be a complex manifold of (complex) dimension $d_X$.
Classically, in the theory of D-modules, one studies the following objects:
Let $\cM\in\DbholD{X}$. Denote by $\Omega_X$ the sheaf of top-degree holomorphic differential forms on $X$. Then one defines the \emph{holomorphic de Rham complex} of $\cM$ by
$$\DR{X}(\cM)\vcentcolon= \Omega_X\LtensD{X} \cM$$
and the \emph{holomorphic solution complex} of $\cM$ by
$$\Sol{X}(\cM)\vcentcolon= \RR\Hom_{\cD_X}(\cM,\cO_X).$$

In \cite{KasRHreg}, M.\ Kashiwara proved that the de Rham functor gives an equivalence
$$\DR{X}\colon \DbrhD{X}\overset{\sim}{\To} \DbCcC{X}.$$

It was then natural to search for a generalization of this result to holonomic D-modules (relaxing the regularity assumption). It was clear that the functor $\DR{X}$ is no longer fully faithful on the category $\DbholD{X}$, and hence the framework (the functor and the target category) would need to be modified.

Finally, A.\ D'Agnolo and M.\ Kashiwara (see \cite{DK16}) were able to establish a fully faithful functor, the \emph{enhanced de Rham functor}
$$\DRE{X}\colon \DbholD{X}\lhook\joinrel\To \EbRcIC{X}.$$

Let us briefly give some details on its construction:

Starting from the ind-sheaf of tempered holomorphic functions $\cO^\mathrm{t}_X\in \DbIC{X}$ that had been introduced in \cite{KS01}, the authors of \cite{DK16} define the enhanced ind-sheaf of enhanced tempered holomorphic functions $\OE{X}\in \EE^\mathrm{b}(\mathrm{I}\C_X)$. This is done by first defining enhanced tempered distributions $\DbE$ and setting $\OE{X}\vcentcolon=\Omega_{\Xc}\otimes_{\cD_{\Xc}} \DbE$, in analogy to \eqref{eq:DefOt}. (Recall that $\Xc$ denotes the complex conjugate manifold.)
They then set $\Omega^\EE_X\vcentcolon= \Omega_X \LtensO{X} \OE{X}$ and define the functors
$$\DRE{X}(\cM) \vcentcolon= \Omega^\EE_X\LtensD{X} \cM$$
and
$$\SolE{X}(\cM) \vcentcolon= \RHom_{\cD_X}(\cM,\OE{X}).$$

These two functors are related by duality (recall the duality functors for D-modules and enhanced ind-sheaves from Sections~\ref{sec:Holonomic} and \ref{sec:enhanced}, respectively):
$$\DE{X}\DRE{X}(\cM)\iso\DRE{X}(\D_X\cM)\iso \SolE{X}(\cM)[d_X].$$

For exponential D-modules, the enhanced solution and de Rham functors are explicitly described as follows: Consider a normal crossing divisor $D\subset X$ and a function $\varphi\in\Gamma(X;\cO_X(*D))$, then
$$\SolE{X}(\cE^\varphi) \iso \E^{\real\varphi}_{X\setminus D|X},\qquad \DRE{X}(\cE^\varphi)\iso \RIhom(\pi^{-1}\C_{X\setminus D},\E^{-\real \varphi}_{X\setminus D|X})[d_X]$$
(see \cite[Corollary~9.4.12 and Lemma~9.3.1]{DK16}).

\begin{rem}\label{rem:beta}
	The framework and notation are obviously set up to be similar to the regular case, and we refer to \cite{DK16} and \cite{KS16} for more details. Let us, however, remark that the notation is not self-explanatory here, and quite some simplification in notation has happened between \cite{KS01} and works like \cite{DK16} and \cite{KS16}. In particular, whenever a sheaf (as opposed to an enhanced ind-sheaf) appears in the above formulae, it should be read as $\beta\pi^{-1}$ of this sheaf to make sense of these expressions.
	For example, in the notation of \cite{KS01}, the definition of the enhanced solutions functor reads as
	$$\SolE{X}(\cM)=\RR\Ihom_{\beta\pi^{-1}\cD_X}(\beta\pi^{-1}\cM,\OE{X}).$$
\end{rem}

\subsection{Galois conjugation}\label{subsec:GalConj}
Let $L/K$ be a field extension and let $g\in \mathrm{Aut}(L/K)$, i.e.\ $g$ is a field automorphism of $L$ with $g|_K=\id_K$. Then, for an $L$-vector space $V$ we can define its $g$-conjugate $L$-vector space $\overline{V}^g$, which has the same underlying abelian group, but the action of $L$ is given by $\ell\cdot v\vcentcolon= g(\ell)v$ for $\ell\in L$, $v\in V$. This defines a functor from the category of vector spaces to itself (which sends a morphism to the same set-theoretic map).

This conjugation naturally induces a $g$-conjugation on sheaves of $L$-vector spaces, as well as on ind-sheaves over $L$ and enhanced ind-sheaves over $L$, i.e.\ we have functors
$$\overline{(\bullet)}^g\colon \DbL{X}\to \DbL{X}, \qquad \overline{(\bullet)}^g\colon \EbIL{\cX}\to \EbIL{\cX}$$
for a topological space $X$ (resp.\ a bordered space $\cX$), and these functors also preserve $\R$- and $\C$-constructibility.

In the case $L=\C$, $K=\R$, there is only one nontrivial element $\gamma\in \mathrm{Aut}(\C/\R)$, which is complex conjugation, and we therefore simply write $\overline{(\bullet)}$ instead of $\overline{(\bullet)}^\gamma$.

If $X$ is a complex manifold and $\Xc$ its complex conjugate, their underlying topological spaces are the same and hence the categories $\Dbk{X}$ and $\Dbk{\Xc}$ (for a field $\k$) are naturally identified, and similarly for $\EbIk{X}$ and the associated subcategories of $\R$- and $\C$-constructible objects. We can identify $\cO_{\Xc}=\overline{\cO_X}$ and $\cD_{\Xc}=\overline{\cD_X}$, and for a $\cD_X$-module $\cM$, the conjugate $\overline{\cM}$ is naturally a $\cD_{\Xc}$-module. In this situation, it is not difficult to see that
$$\DR{\Xc}(\overline{\cM})=\overline{\DR{X}(\cM)}, \qquad \DRE{\Xc}(\overline{\cM})= \overline{\DRE{X}(\cM)}.$$
	
\section{Conjugation of D-modules and the enhanced de Rham functor}\label{sec:KashiwaraConj}

In this section, our aim is to prove an analogue of a result of \cite{KasConj} in the context of the enhanced de Rham functor. Let us briefly recall the idea and argument of the main statement in \cite{KasConj}.

The Riemann--Hilbert correspondence for regular holonomic D-modules states that the de Rham functor induces an equivalence of categories
$$\DR{X} \colon \DbrhD{X} \overset{\sim}{\longrightarrow} \DbCcC{X}.$$
On the right-hand side, complex conjugation defines an auto-equivalence. It is therefore natural to ask what the corresponding operation on the left-hand side is. In other words, the question is the following: Given an object $\cM\in\DbrhD{X}$, how is the object $\cN\in\DbrhD{X}$ satisfying $\DR{X}(\cN)\iso \overline{\DR{X}(\cM)}$ related to $\cM$?

Indeed, M.\ Kashiwara is able to define a functor $c\colon \DbD{X}\to \DbD{X}$ such that the desired description is $\cN=c(\cM)$, i.e.\
\begin{equation}\label{eq:DRconj}
	\DR{X}(c(\cM)) \iso \overline{\DR{X}(\cM)}
\end{equation}
for $\cM\in\DbrhD{X}$ (in fact, $\cM\in\DbcohD{X}$ is enough here).

The key to this result is an intermediate step, namely the definition and study of a functor
\begin{align*}
C_X\colon \DbD{X}&\to \DbD{\Xc},\\
\cM &\mapsto \RHom_{\cD_X}(\cM,\Db{X}),
\end{align*}
later baptized the \emph{Hermitian duality functor}, where $\Db{X}$ denotes the sheaf of distributions.

One of the key observations in the proof of formula \eqref{eq:DRconj} is the fact that one has an isomorphism of functors\footnote{Comparing this directly with what is written in \cite{KasConj}, our formula here differs by a shift. This is because we use a different convention for the de Rham functor (which actually seems to be more common nowadays and is consistent with later works like \cite{DK16} and \cite{KS16}, for example).} (note that $\DbCcC{X}$ and $\DbCcC{\Xc}$ are naturally identified)
\begin{equation}\label{eq:DRCSol}
	\DR{\Xc}\circ C_X[-d_X] \iso \Sol{X}
\end{equation}
on the category $\DbrhD{X}$ (indeed on $\DbcohD{X}$), and this follows directly from the fact that the Dolbeault complex with distribution coefficients is a resolution of the sheaf of holomorphic functions, i.e.\ $\DR{\Xc}(\Db{X})\iso \cO_X[d_X]$.

In \cite{KasConj}, it was in particular shown that $C_X$ (and hence $c$) preserves regular holonomicity and induces an equivalence of categories if we restrict to regular holonomic D-modules. Moreover, it was a conjecture of M.~Kashiwara (see \cite[Remark~3.5]{KasConj}) that this is also true if we erase the word ``regular'' and just restrict to holonomic D-modules. This conjecture has been proved by C.\ Sabbah and T.\ Mochizuki (\cite[Theorem~3.1.2]{SabAst}, \cite[Corollary~4.19]{MocStokesMero}, see also \cite[§12.6]{SabStokes}).

The idea in this section is now to establish a statement similar to \eqref{eq:DRconj} for the enhanced de Rham functor. More precisely, the idea is that the same relation holds if we replace $\DR{X}$ by $\DRE{X}$ and consider $\cM\in\DbholD{X}$. The functor $c$ will remain unchanged. Due to the results in the holonomic context mentioned above, such a compatibility might not be surprising.
On the other hand, the argument for a statement like \eqref{eq:DRCSol} for the enhanced de Rham and solution functors seems not as direct as in the classical case: The expression that appears is $\DRE{\Xc}(\Db{X})$, while the definition of enhanced tempered holomorphic functions is rather ``$\DR{\Xc}(\cD b^\EE_{X})\iso \OE{X}[d_X]$''.

We therefore apply a different method of proof here, using the classification of general holonomic D-modules due to C.\ Sabbah, K.\ Kedlaya and T.\ Mochizuki (see Section~\ref{sec:Holonomic}), to establish the generalization of \eqref{eq:DRCSol}. This is mainly achieved by combining results about Hermitian duality that have already been proved earlier and that we will briefly review in the next subsection.

\subsection{The Hermitian dual}
Let $X$ be a complex manifold.
In \cite{KasConj}, M. Kashiwara introduced the functor $C_X\colon \DbD{X}\to \DbD{\Xc}$ given by
$$C_X(\cM)\vcentcolon= \RHom_{\cD_X}(\cM,\Db{X}).$$
Here, $\Db{X}$ is the sheaf of Schwartz distributions on the underlying real manifold of $X$. It is a module over $\cD_X$ and $\cD_{\Xc}$.

This functor has been further studied by other authors, in particular by C.\ Sabbah and T.\ Mochizuki, and we recall two important properties that we will use below.

\begin{lemma}[{see \cite[§12.5.2]{MocMT}}] \label{lemma:CXpushforward}
	Let $f\colon X\to Y$ be a morphism of complex manifolds and let $\cM$ be a holonomic $\cD_X$-module such that $f$ is proper on $\supp \cM$. Then there is a natural isomorphism
	\begin{equation*}
		\DD \overline{f}_* C_X(\cM) \iso C_Y(\DD f_*\cM).
	\end{equation*}
\end{lemma}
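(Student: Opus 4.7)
My plan is to deduce the isomorphism from a Grothendieck--Serre-type duality for $\cD$-modules combined with a Poincaré/integration property for the sheaf of distributions under proper direct image. Because $f$ is proper on $\supp\cM$, we may---working locally on $Y$---treat $f$ as a proper map, so that $\RR f_*=\RR f_!$ and the standard adjunction $(\RR f_*,f^!)$ is available. The idea is to rewrite both sides of the desired isomorphism as $\RR f_*\RHom_{\cD_X}(\cM,-)$ applied to two a priori different bi-$\cD$-modules, and then identify the latter. Using the definition of $\DD\overline{f}_*$ via the transfer bimodule $\cD_{\overline{Y}\leftarrow\overline{X}}$, the fact that $\overline{f}$ and $f$ share the same underlying continuous map, and the commuting $\cD_X$- and $\cD_{\overline{X}}$-actions on $\Db{X}$, the left-hand side becomes
$$\DD\overline{f}_* C_X(\cM) \iso \RR f_*\RHom_{\cD_X}\bigl(\cM,\;\cD_{\overline{Y}\leftarrow\overline{X}}\LtensD{\overline{X}}\Db{X}\bigr),$$
while the sheaf-theoretic $(\RR f_*,f^!)$-adjunction followed by tensor-hom converts the right-hand side into
$$C_Y(\DD f_*\cM) \iso \RR f_*\RHom_{\cD_X}\bigl(\cM,\;\RHom_{f^{-1}\cD_Y}(\cD_{Y\leftarrow X},\,f^!\Db{Y})\bigr).$$

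The heart of the proof is thus to establish the bi-$\cD$-module isomorphism
$$\cD_{\overline{Y}\leftarrow\overline{X}}\LtensD{\overline{X}}\Db{X} \iso \RHom_{f^{-1}\cD_Y}\bigl(\cD_{Y\leftarrow X},\,f^!\Db{Y}\bigr),$$
a Poincaré/Serre-type duality for distributions along the pair $(f,\overline{f})$. I would prove this by factoring $f$ through its graph $X\hookrightarrow X\times Y\to Y$. For the closed immersion factor, Kashiwara's equivalence together with the local description of distributions supported on a submanifold reduces the verification to a direct computation. For the smooth projection factor, integration of distributions along fibres provides the trace morphism $\RR f_!\Db{X}\to \Db{Y}[d_Y-d_X]$, which one compares with the transfer bimodule structures on both sides after identifying $\cD_{Y\leftarrow X}$ with $\omega_{X/Y}\otimes_{\cO_Y}f^{-1}\cD_Y$ in the smooth case.

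The main obstacle will be the careful bookkeeping of the shifts and twists by the relative dualizing sheaves $\omega_{X/Y}$ and $\omega_{\overline{X}/\overline{Y}}$, and ensuring that the bi-$\cD$-module structure produced through the chain of adjunctions indeed coincides with the canonical one on $\Db{X}$. As a complementary strategy, the holonomicity of $\cM$ allows one to invoke Lemma~\ref{lemma:classif} and reduce to objects of the form $\DD g_*\cE^\varphi$ for a projective morphism $g$, where both sides admit an entirely explicit description and the comparison becomes essentially mechanical.
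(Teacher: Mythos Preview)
The paper does not actually give a proof of this lemma: it is stated with a bare citation to \cite[§12.5.2]{MocMT} and then the text moves on. Your main line of argument---rewriting both sides as $\RR f_*\RHom_{\cD_X}(\cM,-)$ applied to two bi-$\cD$-modules and then proving the underlying Serre-type isomorphism for distributions by graph factorization---is the standard route and is essentially what one finds in the cited literature (and already implicitly in Kashiwara's original paper \cite{KasConj}). The rewritings you give are correct: coherence of $\cM$ justifies pulling the $\cD_{\overline X}$-tensor through the $\RHom_{\cD_X}$, and properness on the support gives the $(\RR f_*,f^!)$-adjunction. The core bi-$\cD$-module identity you isolate is exactly the content of the ``integration of distributions'' trace, and your closed-immersion/projection decomposition is the right way to verify it.

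Your ``complementary strategy'' via Lemma~\ref{lemma:classif}, however, is circular and should be dropped. To apply that lemma you would need to verify its condition~(e), i.e.\ stability under projective pushforward; but checking that $\mathrm{P}_Z(\DD g_*\cM)$ holds from $\mathrm{P}_X(\cM)$ requires knowing $C_Z(\DD g_*\cM)\iso \DD\overline{g}_* C_X(\cM)$, which is precisely the statement you are trying to prove. Likewise, reducing to $\cM=\DD g_*\cE^\varphi$ does not help, since computing $C_X$ of such an object and comparing with $\DD f_*$ again forces you through the same compatibility. Stick with the direct duality argument.
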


For the next lemma, recall the notations $\cM(*D)$ and $\cM(!D)$ from Section~\ref{sec:Holonomic}. Moreover, note that if $X$ is a complex manifold and $D\subset X$ is a hypersurface, this also defines a hypersurface $\overline{D}\subset \overline{X}$ in the complex conjugate manifold.

\begin{lemma}\label{lemma:ConjLoc}
	Let $\cM$ be a holonomic $\cD_X$-module and $D\subset X$ a hypersurface. Then there is an isomorphism
	$$C_X(\cM(*D)) \iso C_X(\cM)(!\overline{D}).$$
\end{lemma}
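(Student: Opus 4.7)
The plan is to apply the dévissage principle of Lemma~\ref{lemma:classif}, with $\mathrm{P}_X(\cM)$ taken to be the assertion: \emph{for every hypersurface $D\subset X$, there is a natural isomorphism $C_X(\cM(*D))\iso C_X(\cM)(!\overline{D})$}. The stability properties (a)--(d) are essentially formal: the functor $C_X$ preserves distinguished triangles on $\DbholD{X}$ (by the Sabbah--Mochizuki resolution of Kashiwara's conjecture recalled above), both $(\bullet)(*D)$ and $(\bullet)(!\overline{D})$ commute with shifts, triangles and direct summands, and the whole question is local in $X$ (hence in $\overline{X}$).

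For stability under projective pushforward (e), let $f\colon X\to Y$ be a projective morphism, $E\subset Y$ a hypersurface, and set $D\defeq f^{-1}(E)$. The standard commutation of proper direct image with (co)localization gives $(\DD f_*\cM)(*E)\iso \DD f_*(\cM(*D))$ and $(\DD \overline{f}_*\cN)(!\overline{E})\iso \DD \overline{f}_*(\cN(!\overline{D}))$. Chaining these with Lemma~\ref{lemma:CXpushforward} and the induction hypothesis $\mathrm{P}_X(\cM)$ produces
\begin{align*}
C_Y\bigl((\DD f_*\cM)(*E)\bigr)
&\iso \DD \overline{f}_* C_X(\cM(*D))
\iso \DD \overline{f}_*\bigl(C_X(\cM)(!\overline{D})\bigr)\\
&\iso \bigl(\DD \overline{f}_* C_X(\cM)\bigr)(!\overline{E})
\iso C_Y(\DD f_*\cM)(!\overline{E}),
\end{align*}
which is $\mathrm{P}_Y(\DD f_*\cM)$.

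The substance of the argument thus lies in the base case (f), where $\cM$ admits a good normal form along a normal crossing divisor $D_0\subset X$. A preliminary reduction, treating separately the components of an arbitrary hypersurface $D$ meeting $D_0$ and those disjoint from it and using the localization triangles $\RR\Gamma_{[Z]}\cM\to\cM\to\cM(*Z)\ToPO$, brings us to the case $D=D_0$. Locally on the real oriented blow-up $\varpi\colon\XD\to X$, the good normal form gives a decomposition $\cM^\cA\iso \bigoplus_i (\cE^{\varphi_i})^\cA$, reducing the identification to the elementary exponential $\cD_X$-modules $\cE^{\varphi_i}$. For these models, the required identification follows from the explicit computation of the Hermitian dual carried out in \cite{SabAst} and \cite{MocStokesMero, MocMT}.

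The main obstacle will be this last step: the good normal form decomposition is only visible on the blow-up $\XD$ while the $(*D_0)$- and $(!\overline{D_0})$-operations live on $X$, so one needs to combine the sectorial decomposition with the explicit Hermitian-dual formulas in a coherent way. This technical point is precisely what is established in the works of Sabbah and Mochizuki cited above.
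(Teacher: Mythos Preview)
Your approach differs substantially from the paper's, which is far more direct. The paper gives essentially a one-line argument via Mochizuki's formalism of non-degenerate $\cD$-triples \cite[§12.2.2]{MocMT}: setting $\cN=\overline{C_X(\cM)}$, the tautological pairing $C\colon \cM\times \overline{\cN}\to\Db{X}$ makes $(\cM,\cN,C)$ a non-degenerate $\cD$-triple, and Mochizuki shows that its localization $(\cM(*D),\cN(!D),C(!D))$ remains non-degenerate, which is exactly the claim. No dévissage is needed, and the lemma is really just a restatement of a known fact about $\cD$-triples.

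Your route through Lemma~\ref{lemma:classif} has genuine gaps. First, the dévissage is only meaningful once you have fixed a \emph{specific} natural morphism between $C_X(\cM(*D))$ and $C_X(\cM)(!\overline{D})$ and are checking that it is an isomorphism; you never construct one, and there is no obvious candidate (both objects map naturally to $C_X(\cM)$, but not evidently to each other), so condition~(c) is not automatic. Second, your reduction in (f) of an arbitrary hypersurface $D$ to $D_0$ is not justified: for a component $Z$ of $D$ disjoint from $D_0$, $\cM$ is a connection near $Z$, so $\RR\Gamma_{[Z]}\cM\neq 0$, and neither $\cM(*Z)$ nor $\RR\Gamma_{[Z]}\cM$ has good normal form along $D_0$; the localization triangle therefore does not effect the reduction you claim. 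Third, and most seriously, your base case for the $\cE^{\varphi_i}$ ultimately rests on the explicit Hermitian-dual computation which, in this paper's logic (Lemma~\ref{lemma:CXEphi}), is itself deduced \emph{from} Lemma~\ref{lemma:ConjLoc}; your citations are too vague to confirm you avoid this circularity, and in fact the relevant statements in \cite{SabAst,MocMT} sit inside the same $\cD$-triple machinery that the paper invokes directly.
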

\begin{proof}
	Let us set $\cN=\overline{C_X(\cM)}$ and let us denote by $C\colon \cM\times\overline{\cN} \to \Db{X}$ the canonical pairing. Then $(\cM,\cN,C)$ is a non-degenerate $\cD$-triple (in the sense of \cite{MocMT}, for example). Then by \cite[§12.2.2]{MocMT}, the $\cD$-triple $(\cM(*D), \cN(!D),C(!D))$ is still non-degenerate. In particular, $C_X(\cM(*D))\iso \overline{\cN(!D)}\iso C_X(\cM)(!\overline{D})$, as desired.
\end{proof}

Recall the notation on the real oriented blow-up from Section~\ref{sec:Holonomic}. If $X$ is a complex manifold and $D\subset X$ is a normal crossing divisor, we also have the normal crossing divisor $\overline{D}\subset \Xc$ (if $D$ is locally given by $\{z_1\cdot\ldots\cdot z_k=0\}$, then $\overline{D}$ is locally given by $\{\overline{z_1}\cdot\ldots\cdot \overline{z_k}=0\}$) and one has the real blow-up space $\overline{\varpi}\colon \widetilde{\Xc}\to \Xc$. (Its underlying topological space is naturally identified with that of $\widetilde{X}$ and $\cA_{\widetilde{\Xc}}^{\mathrm{mod}}=\overline{\Amod}$. We will often just write $\varpi$ instead of $\overline{\varpi}$.)
If $\cM$ is a $\cD_{\Xc}$-module, we write
$$\cM^{\overline{\cA}}\vcentcolon= \cA_{\widetilde{\Xc}}^{\mathrm{mod}}\otimes_{\varpi^{-1} \cO_{\Xc}} \varpi^{-1}\cM$$
to emphasize that we are taking the tensor product with $\cA_{\widetilde{\Xc}}^\mathrm{mod}$, the sheaf of \emph{antiholomorphic} functions with moderate growth at $\partial \widetilde{\Xc}$.

\begin{lemma}\label{lemma:CXEphi}
	Let $X$ be a complex manifold, $D\subset X$ a normal crossing divisor and $\varphi\in \Gamma(X;\cO_X(*D))$. Then there is an isomorphism
	$$(C_X(\cE^\varphi))^{\overline{\cA}}\iso (\cE^{-\overline{\varphi}})^{\overline{\cA}}.$$
\end{lemma}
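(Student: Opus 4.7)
The statement is local on $X$, and the strategy is to construct a canonical morphism $\beta^{\overline{\cA}}$ via a pairing and verify that it is an isomorphism stalkwise on $\widetilde{X}$.

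First, I would define a $\cD_X\otimes_{\C}\cD_{\Xc}$-bilinear pairing
$$\cE^{\varphi}\otimes_{\C}\cE^{-\overline{\varphi}}\To \Db{X}$$
by sending the generator $e^{\varphi}\otimes e^{-\overline{\varphi}}$ to the function $e^{\varphi-\overline{\varphi}}=e^{2i\mathop{\mathrm{Im}}\varphi}$. This function is locally bounded on $X\setminus D$ (its modulus is constantly $1$) and therefore extends naturally to a tempered distribution on $X$. Compatibility with the $(\cD_X,\cD_{\Xc})$-bimodule structure is an immediate computation using $\partial_{\bar z_i}\varphi=0$ and $\partial_{z_i}\overline{\varphi}=0$: the action of $\partial_{z_i}$ on both sides yields $(\partial_{z_i}\varphi)\, e^{\varphi-\overline{\varphi}}$, and that of $\partial_{\bar z_i}$ yields $-(\partial_{\bar z_i}\overline{\varphi})\,e^{\varphi-\overline{\varphi}}$. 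By the tensor--hom adjunction, this pairing defines a $\cD_{\Xc}$-linear morphism
$$\beta\colon \cE^{-\overline{\varphi}}\To C_X(\cE^{\varphi}),$$
sending the generator $e^{-\overline{\varphi}}$ to the distributional solution $e^{2i\mathop{\mathrm{Im}}\varphi}$ of the equations $(\partial_{z_i}-\partial_{z_i}\varphi)u=0$.

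Next, applying the exact functor $(\,\cdot\,)^{\overline{\cA}}=\cA^{\mathrm{mod}}_{\widetilde{\Xc}}\otimes_{\varpi^{-1}\cO_{\Xc}}\varpi^{-1}(\,\cdot\,)$ yields $\beta^{\overline{\cA}}\colon(\cE^{-\overline{\varphi}})^{\overline{\cA}}\to(C_X(\cE^{\varphi}))^{\overline{\cA}}$, which I would show to be an isomorphism stalkwise on $\widetilde{X}$. On the open part $\widetilde{X}\setminus\partial\widetilde{X}=X\setminus D$, both sides are invertible $\cO_{\Xc}$-modules and $\beta$ is given by multiplication by the nowhere-vanishing antiholomorphic function $e^{-\overline{\varphi}}$. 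At a boundary point $p\in\partial\widetilde{X}$, I would use the explicit description: a germ of $(C_X(\cE^{\varphi}))^{\overline{\cA}}$ at $p$ corresponds to a moderate antiholomorphic section $v$ such that $e^{\varphi}\cdot v$ is of moderate growth, and the identity $|e^{\varphi}\cdot f\cdot e^{-\overline{\varphi}}|=|f|$ shows that every such $v$ can be written uniquely as $f\cdot e^{-\overline{\varphi}}$ with $f\in\overline{\cA}$. This exhibits both sides as free $\overline{\cA}$-modules of rank one with corresponding generators under $\beta^{\overline{\cA}}$, yielding the claimed isomorphism.

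The main obstacle is making the local analysis at the boundary fully rigorous, specifically the identification of sections of $(C_X(\cE^{\varphi}))^{\overline{\cA}}$ with moderate antiholomorphic functions $v$ such that $e^{\varphi}v$ has moderate growth. This relies on the compatibility of the sheaf of tempered distributions with the real oriented blow-up. One can proceed either by a direct analysis in local coordinates, separating the Stokes sectors on which $e^{\varphi}$ or $e^{-\overline{\varphi}}$ are of moderate growth, or, more efficiently, by combining Lemma~\ref{lemma:ConjLoc} on the compatibility of $C_X$ with meromorphic localization together with the structural results of Sabbah and Mochizuki on Hermitian duality of good meromorphic connections, which reduce the claim to checking that the pairing is nondegenerate (already verified since $|e^{2i\mathop{\mathrm{Im}}\varphi}|=1$).
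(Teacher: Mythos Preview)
Your approach differs from the paper's: you construct an explicit pairing $\cE^{\varphi}\otimes_{\C}\cE^{-\overline{\varphi}}\to\Db{X}$ via $e^{2i\mathop{\mathrm{Im}}\varphi}$ and then attempt a stalkwise verification on $\widetilde{X}$, whereas the paper proceeds entirely by a chain of citations to Sabbah's work, passing through the functor $C_X^{\mathrm{mod}\,D}=\RHom_{\cD_X}(-,\mathcal{D}b^{\mathrm{mod}\,D}_X)$ and invoking \cite[Proposition~3.2.2 and p.~69]{SabAst} together with Lemma~\ref{lemma:ConjLoc}. Your explicit pairing is a genuine addition: the paper never writes down the morphism, only its existence via Sabbah's machinery.

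However, your stalkwise argument at a boundary point $p\in\partial\widetilde{X}$ has a real gap that you correctly flag but do not close. The description ``a germ of $(C_X(\cE^{\varphi}))^{\overline{\cA}}$ at $p$ corresponds to a moderate antiholomorphic $v$ with $e^{\varphi}v$ of moderate growth'' is not what the functor $(\cdot)^{\overline{\cA}}$ gives: by definition $(C_X(\cE^{\varphi}))^{\overline{\cA}}_p = (\cA^{\mathrm{mod}}_{\widetilde{\Xc}})_p\otimes_{\cO_{\Xc,\varpi(p)}} C_X(\cE^{\varphi})_{\varpi(p)}$, and computing this tensor product requires knowing $C_X(\cE^{\varphi})_{\varpi(p)}$ as an $\cO_{\Xc}$-module, which is precisely the nontrivial content. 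Trading the tensor product for an $\RHom$ into moderate distributions on the blow-up (your intended description) is exactly the passage from $C_X$ to $C_{\widetilde X}^{\mathrm{mod}\,D}$ that the paper imports from Sabbah. So your ``efficient'' fallback---Lemma~\ref{lemma:ConjLoc} plus Sabbah's structural results---is essentially the paper's proof, and your direct route does not avoid it. The value of your write-up is the explicit construction of $\beta$; the verification still rests on the same external input.
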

\begin{proof}
	We use the notation in \cite{SabAst} without explaining all the details, in particular the functor $C_X^{\mathrm{mod}\, D}=\RHom_{\cD_{X}}(-,\mathcal{D}b^{\mathrm{mod}\, D}_{X})$. There are isomorphisms
	\begin{align*}
		(C_X(\cE^\varphi))^{\overline{\cA}}&\iso \big(C_X(\cE^\varphi)(*\overline{D})\big)^{\overline{\cA}} \iso \big(C_X(\cE^\varphi(!D))\big)^{\overline{\cA}}\iso (C_X^{\mathrm{mod}\, D}(\cE^\varphi))^{\overline{\cA}}
	\end{align*}
	where the second isomorphism follows from Lemma~\ref{lemma:ConjLoc} and the last isomorphism follows from \cite[Proposition~3.2.2]{SabAst}.
	 On the other hand, we know from \cite[p.\ 69]{SabAst} that
	 \begin{align*}
	 	(C_X^{\mathrm{mod}\, D}(\cE^\varphi))^{\overline{\cA}} \iso C_{\widetilde{X}}^{\mathrm{mod}\, D}\big((\cE^\varphi)_{\widetilde{X}}\big)\iso (\cE^{-\overline{\varphi}})^{\overline{\cA}}.
	 \end{align*}
\end{proof}

The following result is shown in \cite[\S II.3]{SabAst} (see also \cite[\S 12.6]{SabStokes}). 
It states that a decomposition as in the definition of good normal form induces a similar decomposition for the Hermitian dual, and describes how its exponential factors change under Hermitian duality.

\begin{prop}[{\cite{SabAst}}]
	\label{prop:AnStrCX}
	Let $D\subset X$ be a normal crossing divisor and $\cM$ a holonomic $\cD_X$-module with a good normal form along $D$. If $p\in D$ and $\{\varphi_i\mid i\in I\}$ is a good set of functions in a neighbourhood of $p$ as in Definition~\ref{def:normalform} such that every point of $\varpi^{-1}(p)$ has an open neighbourhood $V\subset \widetilde{X}$ with an isomorphism
	$$(\cM^\cA)|_V \iso \Big(\bigoplus_{i\in I} \big(\cE^{\varphi_i}\big)^\cA\Big)\Big|_V,$$
	then every point of $\overline{\varpi}^{-1}(p)$ has an open neighbourhood $V'\subset \widetilde{\overline{X}}$ with an isomorphism
	$$\big(C_X(\cM)^{\overline{\cA}}\big)\big|_{V'} \iso \Big(\bigoplus_{i\in I} \big(\cE^{-\overline{\varphi_i}}\big)^{\overline{\cA}}\Big)\Big|_{V'}.$$
\end{prop}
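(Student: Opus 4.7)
The plan is to reduce the statement to a combination of Lemma~\ref{lemma:CXEphi}, additivity of Hermitian duality, and its compatibility with passage to the real blow-up $\widetilde{X}$.

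First, I would exploit the assumption $\cM\iso\cM(*D)$ to replace $C_X$ by the variant $C_X^{\mathrm{mod}\, D}(\cM) = \RHom_{\cD_X}(\cM,\mathcal{D}b^{\mathrm{mod}\, D}_X)$ already used in the proof of Lemma~\ref{lemma:CXEphi}. As in that proof, there is a chain of isomorphisms
\begin{equation*}
C_X(\cM)^{\overline{\cA}} \iso \bigl(C_X(\cM)(*\overline{D})\bigr)^{\overline{\cA}} \iso \bigl(C_X(\cM(!D))\bigr)^{\overline{\cA}} \iso \bigl(C_X^{\mathrm{mod}\, D}(\cM)\bigr)^{\overline{\cA}},
\end{equation*}
using the insensitivity of $(-)^{\overline{\cA}}$ to localization at $\overline{D}$, Lemma~\ref{lemma:ConjLoc} (in its dual version swapping $*$ and $!$), and \cite[Proposition~3.2.2]{SabAst}.

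Next, the crucial ingredient is the descent of $C_X^{\mathrm{mod}\, D}(-)^{\overline{\cA}}$ to the real blow-up: in the setting of \cite[\S II.3]{SabAst}, this functor is shown to depend only on the datum $(-)^\cA$ on $\widetilde{X}$, being computable via a suitable $\RHom$ into a sheaf of moderate-growth distributions on $\widetilde{X}$. In particular, it commutes with finite direct sums of $\cA$-modules. Applying it to the given local decomposition $(\cM^\cA)|_V \iso \bigoplus_i (\cE^{\varphi_i})^\cA|_V$ therefore produces
\begin{equation*}
C_X(\cM)^{\overline{\cA}}\big|_{V'} \iso \bigoplus_{i\in I} C_X(\cE^{\varphi_i})^{\overline{\cA}}\big|_{V'},
\end{equation*}
where $V'\subset\widetilde{\Xc}$ corresponds to $V$ under the identification of underlying topological spaces. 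By Lemma~\ref{lemma:CXEphi}, each summand on the right is isomorphic to $(\cE^{-\overline{\varphi_i}})^{\overline{\cA}}|_{V'}$, and the claim follows.

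The hard part will be justifying this descent step, i.e.\ making precise the sense in which Hermitian duality for $\cM$ can be computed on the real blow-up in a way that commutes with the direct sum decomposition of $\cM^\cA$. This cannot be done purely formally: it reflects the fact that the ``sectorial'' splitting of moderate-growth sections of $\cM$ matches the given splitting of $\cM^\cA$, which is the content of the analytic asymptotic theory developed in \cite[\S II.3]{SabAst} (see also \cite[\S 12.6]{SabStokes}). Once this is available, Lemma~\ref{lemma:CXEphi} supplies the explicit exponent $-\overline{\varphi_i}$ on each factor and the proposition is immediate.
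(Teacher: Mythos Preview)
The paper does not supply its own proof of this proposition: it is stated as a result of Sabbah, with the citation \cite[\S II.3]{SabAst} (and \cite[\S 12.6]{SabStokes}) in lieu of a proof. Your outline is essentially a sketch of how that argument runs---reduce to $C_X^{\mathrm{mod}\, D}$ via Lemma~\ref{lemma:ConjLoc} and \cite[Proposition~3.2.2]{SabAst}, pass to the blow-up, decompose, and apply Lemma~\ref{lemma:CXEphi} factor by factor---and you correctly flag that the substantive step (that Hermitian duality on the blow-up depends only on $\cM^\cA$ and respects the sectorial splitting) is precisely the analytic content of \cite[\S II.3]{SabAst} rather than something you can extract formally from the lemmas already proved in the paper. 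So your proposal is consistent with the paper's treatment, which simply defers to Sabbah for the proof.
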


\subsection{Hermitian duality and the enhanced de Rham complex}

As mentioned above, the main step in proving the desired generalization of \eqref{eq:DRconj} is the proof of an analogue of the isomorphism \eqref{eq:DRCSol} in the setting of the enhanced de Rham and solution functors. This will be done in Proposition~\ref{prop:DRECSolEIso} below. Let us first show the existence of a canonical morphism.

\begin{lemma}\label{lemma:morphDRECSolE}
	Let $X$ be a complex manifold and $\cM\in \DbcohD{X}$. There is a canonical morphism, functorial in $\cM$,
	\begin{equation}\label{eq:canMorphCX}
		\DRE{\Xc}(C_X(\cM))[-d_X] \To \SolE{X}(\cM).
	\end{equation}
\end{lemma}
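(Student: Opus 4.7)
The plan is to mimic, in the enhanced setting, the construction giving the classical isomorphism~\eqref{eq:DRCSol}. Classically, that isomorphism arises by composing a standard projection morphism with the Dolbeault-type isomorphism $\DR{\Xc}(\Db{X}) \iso \cO_X[d_X]$. The same strategy should produce at least a canonical morphism here; showing it is an isomorphism is the harder statement, presumably to be proved in Proposition~\ref{prop:DRECSolEIso} below via the classification of holonomic D-modules.

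First, the $(\cD_{\Xc}, \cD_X)$-bimodule structure on $\Db{X}$ endows $C_X(\cM) = \RHom_{\cD_X}(\cM, \Db{X})$ with a natural structure of complex of $\cD_{\Xc}$-modules, and since the two actions commute, there is a standard projection morphism
\begin{equation*}
\DRE{\Xc}(C_X(\cM)) = \Omega^\EE_{\Xc} \LtensD{\Xc} \RHom_{\cD_X}(\cM, \Db{X}) \To \RHom_{\cD_X}\bigl(\cM,\, \Omega^\EE_{\Xc} \LtensD{\Xc} \Db{X}\bigr).
\end{equation*}
Such a morphism is functorial in $\cM$.

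Second, I would construct a natural morphism
\begin{equation*}
\Omega^\EE_{\Xc} \LtensD{\Xc} \Db{X} \To \OE{X}[d_X]
\end{equation*}
enhancing the classical Dolbeault resolution. This is the delicate step: as already noted in the discussion before this lemma, it is not an immediate consequence of the definitions, because the construction of $\OE{X}$ in \cite{DK16} goes through enhanced tempered distributions $\DbE$ (so that ``$\DR{\Xc}(\DbE) \iso \OE{X}[d_X]$'' is essentially the definition), and the morphism of interest has to pass through the canonical enhancement morphism $\Db{X} \to \DbE$ which regards an ordinary distribution as a tempered enhanced distribution (after the appropriate $\beta\pi^{-1}$ adjustment). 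Tensoring such a morphism over $\cD_{\Xc}$ with $\Omega^\EE_{\Xc}$ and unwinding the definitions of \cite{DK16} should then produce the desired map.

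Composing the two steps and shifting by $[-d_X]$ yields the canonical morphism~\eqref{eq:canMorphCX}, with functoriality in $\cM$ inherited from the two ingredients. The main obstacle is precisely the second step: making the morphism $\Db{X} \to \DbE$ precise at the level of (enhanced) ind-sheaves and verifying its compatibility with the $\cD_X$- and $\cD_{\Xc}$-actions on both sides. The asymmetry highlighted by the author just before the lemma is also the reason why proving that~\eqref{eq:canMorphCX} is in fact an isomorphism (Proposition~\ref{prop:DRECSolEIso}) cannot just invoke a direct resolution argument and instead will require the reduction to the good normal form case afforded by Lemma~\ref{lemma:classif}.
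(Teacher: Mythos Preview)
Your two-step outline matches the paper's: pull $\cM$ out via a projection morphism (in fact an isomorphism for coherent $\cM$), reducing to the construction of a morphism $\Omega^\EE_{\Xc}\LtensD{\Xc}\Db{X}\to\OE{X}[d_X]$. Your proposed execution of that second step has a gap, though. Tensoring a putative morphism $\Db{X}\to\DbE$ with $\Omega^\EE_{\Xc}$ over $\cD_{\Xc}$ lands in $\Omega^\EE_{\Xc}\LtensD{\Xc}\DbE\iso\Omega_{\Xc}\LtensD{\Xc}(\OE{\Xc}\LtensO{\Xc}\DbE)$, not in $\OE{X}[d_X]=\Omega_{\Xc}\LtensD{\Xc}\DbE$; an extra $\OE{\Xc}$ factor still has to be absorbed, and that is where the real work lies, not in ``making $\Db{X}\to\DbE$ precise''.

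The paper handles this by rewriting $\Omega^\EE_{\Xc}\LtensD{\Xc}\Db{X}\iso\Omega_{\Xc}\LtensD{\Xc}(\OE{\Xc}\LtensO{\Xc}\Db{X})$ and constructing $\OE{\Xc}\LtensO{\Xc}\Db{X}\to\DbE$ directly. Unwinding the definitions of $\OE{\Xc}$ and $\DbE$ along $X\times\R_\infty\to X\times\sP\to X\times\PP$ (together with an auxiliary lemma identifying $\Dbt{X\times\sP}\iso k^!\RHom_{\cD_\PP}(\cO_\PP,\Dbt{X\times\PP})[1]$), this is eventually reduced to the concrete morphism $\Cinft{X\times\PP}\LtensO{X}\Db{X}\to\Dbt{X\times\PP}$ obtained by pulling back distributions along the projection $X\times\PP\to X$ and then multiplying by tempered smooth functions. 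So the substantive content of step~2 is a multiplication map mixing the enhanced and ordinary sides, not merely an enhancement $\Db{X}\to\DbE$.
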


In the proof of this statement, we will use the following lemma. We write $\sP\vcentcolon= \PP^1(\R)$ and $\PP\vcentcolon=\PP^1(\C)$ for the real and the complex projective line, respectively.

\begin{lemma}\label{lemma:DbtXP}
	Let $k\colon X\times\sP\to X\times\PP$ be the inclusion. Then there is an isomorphism
	$$\Dbt{X\times\sP} \iso k^!\RHom_{\cD_{\PP}}(\cO_{\PP},\Dbt{X\times\PP})[1].$$
\end{lemma}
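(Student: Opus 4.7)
The plan is to reduce the claim to a local computation in an affine chart of $\PP^1(\C)$ and then invoke the classical description of tempered distributions on a totally real submanifold as boundary values of tempered holomorphic functions.

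First, since the statement is local on $\PP$ and both $\RHom_{\cD_\PP}(\cO_\PP, -)$ and $k^!$ are local operations, I would cover $\PP = \PP^1(\C)$ by its two standard affine charts, each biholomorphic to $\C$, in which $\sP = \PP^1(\R)$ corresponds to $\R \subset \C$. It thus suffices to prove the analogous local statement for the closed real embedding $k\colon X \times \R \hookrightarrow X \times \C$:
\begin{equation*}
\Dbt{X \times \R} \iso k^! \RHom_{\cD_\C}(\cO_\C, \Dbt{X \times \C})[1].
\end{equation*}

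Next, I would compute the right-hand side explicitly. The standard presentation $\cO_\C \iso \cD_\C/\cD_\C \cdot \partial_z$ as a left $\cD_\C$-module yields the length-one free resolution $0 \to \cD_\C \xrightarrow{\cdot\, \partial_z} \cD_\C \to \cO_\C \to 0$, and hence
\begin{equation*}
\RHom_{\cD_\C}(\cO_\C, \Dbt{X \times \C}) \iso \bigl[\Dbt{X\times\C} \xrightarrow{\partial_z} \Dbt{X\times\C}\bigr],
\end{equation*}
a two-term complex concentrated in degrees $0$ and $1$.

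The heart of the argument is then to combine two inputs via the recollement triangle
$$k_* k^! F \to F \to \RR j_* j^{-1} F \xrightarrow{+1}$$
for $j\colon X \times (\C\setminus\R)\hookrightarrow X\times \C$. On the open complement $\C\setminus\R$, the analog of \eqref{eq:DefOt} with the roles of $\partial_z$ and $\partial_{\bar z}$ swapped (a tempered $\partial_z$-Dolbeault lemma) identifies the above two-term complex on $X\times(\C\setminus\R)$ with an ind-sheaf of tempered antiholomorphic functions concentrated in degree $0$. Taking $k^!$ and shifting by $[1]$ then extracts the ``jump'' across $\R$ of these tempered functions defined on the upper and lower half-planes. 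The identification of this jump with $\Dbt{X\times\R}$ is the classical Martineau--Schapira boundary-value representation of tempered distributions as differences of boundary values of tempered holomorphic functions, in its ind-sheaf formulation on the subanalytic site. Since $k$ is a codimension-one closed embedding, the $[-1]$-shift from $k^!$ cancels with the $[1]$ in the statement, landing the result in degree $0$, as required.

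The main obstacle will be the careful justification of the boundary-value identification at the level of ind-sheaves: while the distributional Martineau--Bros theorem is classical, controlling the tempered growth conditions and the filtered colimit structure used in the definition of $\Dbt{}$ requires attention. Once this is in place, however, the rest of the argument is a direct unwinding of definitions combined with the recollement triangle.
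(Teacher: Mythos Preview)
Your approach is plausible but takes a genuinely different route from the paper. The paper's proof is a three-line $\cD$-module argument: it first rewrites $\RHom_{\cD_{\PP}}(\cO_{\PP},\Dbt{X\times\PP})[1]$ as $\Omega_{\PP}\Ltens{\cD_{\PP}}\Dbt{X\times\PP}$ via side-changing, then applies the compatibility
\[
k^!\bigl(\cN\Ltens{\cD_{\PP}}\Dbt{X\times\PP}\bigr)\iso \DD k^*\cN\Ltens{\cD_{\sP}}\Dbt{X\times\sP}
\]
(which follows from \cite[Lemma~5.3.2]{DK16}, itself based on \cite{KS96}), and finally uses the elementary computation $\DD k^*\Omega_{\PP}\Ltens{\cD_{\sP}}\Dbt{X\times\sP}\iso \Dbt{X\times\sP}$ from \cite[Lemma~2.3.2]{KasMThesis}. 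No analysis beyond these cited results is required; the proof is entirely formal once the machinery is in place.

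Your argument, by contrast, localizes to affine charts, resolves $\cO_\C$, and then invokes a tempered Martineau--Schapira boundary-value theorem to identify the jump across $\R$ with $\Dbt{X\times\R}$. This is conceptually correct and arguably more transparent analytically, but the key step---the boundary-value identification in the ind-sheaf (subanalytic) setting, with the filtered-colimit structure of $\Dbt{}$ controlled---is precisely what you flag as the main obstacle and do not actually supply. You are essentially re-deriving by hand the content packaged in the cited lemmas from \cite{KS96} and \cite{DK16}. Also, your remark that ``the $[-1]$-shift from $k^!$ cancels with the $[1]$'' conflates $k^!$ with $k^{-1}[-1]$, which holds for dualizing complexes but not for arbitrary ind-sheaves; the degree bookkeeping would need the recollement triangle spelled out carefully. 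The paper's approach avoids all of this by staying within $\cD$-module functoriality.
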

\begin{proof}
	First, we note that for a right $\cD_{\PP}$-module $\cN$ we have an isomorphism
	$$k^!(\cN\Ltens{\cD_{\PP}} \Dbt{X\times\PP})\iso \DD k^*\cN\Ltens{\cD_{\sP}} \Dbt{X\times\sP},$$
	which follows from \cite[Lemma~5.3.2]{DK16} (which is itself derived from results of \cite{KS96}).
	
	Using this (in the second isomorphism below), one obtains
	\begin{align*}
		k^!\RHom_{\cD_{\PP}}(\cO_{\PP},\Dbt{X\times\PP})[1]
		&\iso k^!\big( \Omega_{\PP} \Ltens{\cD_{\PP}}\Dbt{X\times\PP}\big)\\
		&\iso \DD k^*\Omega_{\PP} \Ltens{\cD_{\sP}}\Dbt{X\times\sP}\\
		&\iso \Dbt{X\times\sP}.
	\end{align*}
	Here, the last isomorphism follows from \cite[Lemma~2.3.2]{KasMThesis}.
\end{proof}

\begin{proof}[Proof of Lemma~\ref{lemma:morphDRECSolE}]
	We have 
	\begin{align*}
		\DRE{\Xc}(C_X(\cM)) &\iso \Omega^\EE_{\Xc} \Ltens{\cD_{\Xc}} \RHom_{\cD_X}(\cM,\Db{X})\\
		&\iso \RHom_{\cD_X}(\cM,\Omega^\EE_{\Xc} \Ltens{\cD_{\Xc}}\Db{X})
	\end{align*}
and
\begin{align*}
	\SolE{X}(\cM) \iso \RHom_{\cD_X}(\cM,\OE{X}).
\end{align*}
Therefore, it suffices to find a canonical morphism $\Omega^\EE_{\Xc} \Ltens{\cD_{\Xc}}\Db{X}[-d_X]\to \OE{X}$. Note further that 
\begin{align*}
	\Omega^\EE_{\Xc} \Ltens{\beta\cD_{\Xc}}\Db{X} &\iso (\Omega_{\Xc} \Ltens{\cO_{\Xc}} \OE{\Xc})\Ltens{\cD_{\Xc}} \Db{X}\\ 
	&\iso \Omega_{\Xc} \Ltens{\cD_{\Xc}} (\OE{\Xc} \Ltens{\cO_{\Xc}} \Db{X})
\end{align*}
and
\begin{align*}
	\OE{X} &\iso \Omega_{\Xc} \Ltens{\cD_{\Xc}} \DbE[-d_X],
\end{align*}
which means that it suffices to find a morphism
$$\OE{\Xc} \Ltens{\cO_{\Xc}} \Db{X} \to \DbE.$$

Let us consider the natural morphisms of bordered spaces
$$\begin{tikzcd}
	X\times \R_\infty \arrow{r}{j} & X\times\sP\arrow{r}{k} & X\times\PP.
\end{tikzcd}$$
One has
\begin{align*}
	\OE{\Xc} \Ltens{\cO_{\Xc}} \Db{X} \iso j^!\RHom_{\cD_\sP}(\cE^t,k^!\cO^\mathrm{t}_{\Xc\times \PP}\Ltens{\cO_X} \Db{X} )[2]
\end{align*} 
and $$\DbE \iso  j^!\RHom_{\cD_\sP}(\cE^t,\Dbt{X\times\sP})[1],$$
so it is enough to construct a morphism (for any complex manifold $X$)
$$k^!\cO^\mathrm{t}_{X\times \PP}\Ltens{\cO_X} \Db{X} [1] \To \Dbt{X\times\sP}.$$

Now we note that we have a morphism
\begin{align*}
k^!\cO^\mathrm{t}_{X\times \PP} &\iso k^!\RHom_{\cD_{\overline{X\times\PP}}}(\cO_{\overline{X\times\PP}},\Cinft{X\times\PP})\\
&\To k^!\RHom_{\cD_{\overline{X\times\PP}}}(\cD_{\Xc}\overset{\DD}{\boxtimes}\cO_{\overline{\PP}},\Cinft{X\times\PP})\\
&\iso  k^!\RHom_{\cD_{\overline{\PP}}}(\cO_{\overline{\PP}},\Cinft{X\times\PP})
\end{align*}
and that
$$\Dbt{X\times\sP} \iso k^!\RHom_{\cD_{\overline{\PP}}}(\cO_{\overline{\PP}},\Dbt{X\times\PP})[1]$$
by Lemma~\ref{lemma:DbtXP} (observing that we can clearly replace $\PP$ by $\overline{\PP}$ there).

Hence, all we finally need to construct is a morphism
$$\Cinft{X\times\PP}\LtensO{X} \Db{X}\to \Dbt{X\times\PP}.$$

Using a slightly more precise notation (explicitly writing the functor $\beta$ and the pullback via the projection $p\colon X\times\PP\to X$, cf.\ Remark~\ref{rem:beta}), this morphism follows from the natural morphism
\begin{equation}\label{eq:multCDb}
\Cinft{X\times\PP}\Ltens{\beta p^{-1}\cO_{X}} \beta p^{-1}\Db{X} \to \Cinft{X\times\PP}\Ltens{\beta p^{-1}\cO_{X}} \Dbt{X\times\PP} \to \Dbt{X\times\PP}.
\end{equation}

Here, the first arrow is induced by the composition of the natural morphisms $\beta p^{-1}\Db{X}\to p^{-1} \Dbt{X}\to \Dbt{X\times\PP}$ (the second of these arrows being given by taking the product of a distribution on $X$ with the constant distribution $1$ on $\PP$, see e.g.\ \cite[Chap.\ 5]{Hor98}). The second arrow in \eqref{eq:multCDb} is given by multiplication of tempered smooth functions with tempered distributions (see \cite[Lemma~3.3]{KasRHreg} and also \cite[Proposition~5.1.3]{DK16}). More precisely, we use the natural morphism $$\Cinft{X\times\PP}\Ltens{\beta p^{-1}\cO_{X}} \Dbt{X\times\PP}\to \Cinft{X\times\PP}\otimes_{\beta p^{-1}\cO_{X}} \Dbt{X\times\PP}$$ composed with this multiplication.

\end{proof}

The classification of holonomic D-modules now enables us to prove that the morphism from Lemma~\ref{lemma:morphDRECSolE} is an isomorphism for any holonomic D-module $\cM$.

\begin{prop}\label{prop:DRECSolEIso}
	The morphism \eqref{eq:canMorphCX} 
	$$\DRE{\Xc}(C_X(\cM))\To \SolE{X}(\cM)$$
	is an isomorphism for any complex manifold $X$ and any $\cM\in\DbholD{X}$.
\end{prop}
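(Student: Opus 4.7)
The plan is to apply the classification lemma (Lemma~\ref{lemma:classif}) to the statement $\mathrm{P}_X(\cM)$ that the morphism \eqref{eq:canMorphCX} is an isomorphism. Since both $\DRE{\Xc}\circ C_X[-d_X]$ and $\SolE{X}$ are triangulated and \eqref{eq:canMorphCX} is functorial in $\cM$, conditions (a)--(d) of that lemma---locality, stability by translation, by exact triangles, and by direct summands---are immediate. For condition (e), Lemma~\ref{lemma:CXpushforward} gives $\DD\overline{f}_*C_X(\cM)\iso C_Y(\DD f_*\cM)$ when $f\colon X\to Y$ is proper on $\supp\cM$; combining this with the compatibility of $\DRE{}$ and $\SolE{}$ with proper pushforward from \cite{DK16} and the naturality of \eqref{eq:canMorphCX} yields $\mathrm{P}_Y(\DD f_*\cM)$ from $\mathrm{P}_X(\cM)$.

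The essential content is condition (f). Assume $\cM\in\ModholD{X}$ has a good normal form along a normal crossing divisor $D\subset X$. By locality one may work near a point $p\in D$ with a good set $\{\varphi_i\mid i\in I\}\subset \Gamma(X;\cO_X(*D))$ such that near every $x\in\varpi^{-1}(p)\subset\XD$ one has $\cM^\cA \iso \bigoplus_{i\in I}(\cE^{\varphi_i})^\cA$, and Proposition~\ref{prop:AnStrCX} provides analogous local decompositions $C_X(\cM)^{\overline{\cA}}\iso \bigoplus_{i\in I}(\cE^{-\overline{\varphi_i}})^{\overline{\cA}}$ on $\widetilde{\Xc}$. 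The enhanced de Rham and solution functors of a meromorphic connection with good normal form can be computed locally from its $\cA$-module on the real oriented blow-up, via the tempered de Rham description of these functors on $\XD$ developed in \cite[\S9]{DK16}. Hence both sides of \eqref{eq:canMorphCX} decompose locally as direct sums indexed by $I$, and by naturality it suffices to verify the isomorphism on each exponential summand, i.e.\ to show that
$$\DRE{\Xc}(\cE^{-\overline{\varphi}})[-d_X]\To\SolE{X}(\cE^\varphi)$$
is an isomorphism for $\varphi\in\Gamma(X;\cO_X(*D))$.

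For this final step we use the explicit formulas for $\SolE{}$ and $\DRE{}$ on exponentials recalled in Section~\ref{sec:enhanced}: $\SolE{X}(\cE^\varphi)\iso \E^{\real\varphi}_{X\setminus D|X}$ and, applied to $\Xc$ with $-\overline{\varphi}$ in place of $\varphi$,
$$\DRE{\Xc}(\cE^{-\overline{\varphi}})[-d_X]\iso \RIhom(\pi^{-1}\C_{X\setminus D},\E^{\real\varphi}_{X\setminus D|X}),$$
using that $-\real(-\overline{\varphi})=\real\varphi$ and that $X$ and $\Xc$ share the same underlying topological space. Because $\E^{\real\varphi}_{X\setminus D|X}$ is concentrated on $(X\setminus D)\times\overline{\R}$ in the sense of Remark~\ref{rem:EfSuban}, the natural map $\E^{\real\varphi}_{X\setminus D|X}\to\RIhom(\pi^{-1}\C_{X\setminus D},\E^{\real\varphi}_{X\setminus D|X})$ is an isomorphism; unwinding the construction in the proof of Lemma~\ref{lemma:morphDRECSolE}---together with Lemma~\ref{lemma:CXEphi} identifying $C_X(\cE^\varphi)^{\overline{\cA}}$ with $(\cE^{-\overline{\varphi}})^{\overline{\cA}}$---identifies \eqref{eq:canMorphCX} with this isomorphism on $\cE^\varphi$.

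The main obstacle I anticipate is the assertion in the second paragraph that the local $\overline{\cA}$-decomposition of $C_X(\cM)$ from Proposition~\ref{prop:AnStrCX} translates into a local decomposition of $\DRE{\Xc}(C_X(\cM))$ into the enhanced de Rham complexes of the conjugate exponential D-modules. Establishing this compatibility requires a careful use of the real blow-up description of enhanced functors from \cite[\S9]{DK16}, together with the fact that $C_X(\cM)$, being locally isomorphic on $\widetilde{\Xc}$ to a direct sum of $(\cE^{-\overline{\varphi_i}})^{\overline{\cA}}$, inherits enough growth and polar structure along $\overline{D}$ for these tools to apply; once this technical point is settled, the remainder of the argument is formal.
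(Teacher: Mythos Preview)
Your overall strategy via Lemma~\ref{lemma:classif} is the paper's, and conditions (a)--(e) are handled correctly. The gap is in condition (f), and it is exactly the obstacle you flag at the end---but it is more serious than a technical compatibility check.

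The problem is that $C_X(\cE^\varphi)$ is \emph{not} isomorphic to $\cE^{-\overline\varphi}$ as a $\cD_{\Xc}$-module. Lemma~\ref{lemma:CXEphi} only identifies their $\overline\cA$-modules on the real blow-up, while Lemma~\ref{lemma:ConjLoc} shows $C_X(\cE^\varphi)\iso C_X(\cE^\varphi)(!\overline D)$, whereas $\cE^{-\overline\varphi}$ is $(*\overline D)$-localized. Your displayed reduction to proving that $\DRE{\Xc}(\cE^{-\overline\varphi})[-d_X]\to\SolE{X}(\cE^\varphi)$ is an isomorphism is therefore not the morphism \eqref{eq:canMorphCX} evaluated at $\cE^\varphi$. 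Worse, your final claim---that the natural map $\E^{\real\varphi}_{X\setminus D|X}\to\RIhom(\pi^{-1}\C_{X\setminus D},\E^{\real\varphi}_{X\setminus D|X})$ is an isomorphism---is false already for $\varphi=0$: there it becomes $\e{\C}{X}(\C_{X\setminus D})\to\e{\C}{X}(\RR j_*\C_{X\setminus D})$, which is not an isomorphism since $\RR j_*\C_{X\setminus D}\not\iso\C_{X\setminus D}$.

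The paper circumvents this by constructing a separate companion morphism directly on the blow-up (Lemma~\ref{lemma:morphDRECSolEbu}),
\[
\DRE{\widetilde{\Xc}}\big(C_X(\cM)^{\overline\cA}\big)[-d_X]\To\SolE{\widetilde X}(\cM^\cA),
\]
to which the local $\cA$- and $\overline\cA$-decompositions genuinely apply; both sides are then computed to coincide for each exponential factor. One descends to $X$ by applying $\pi^{-1}\C_{X\setminus D}\otimes\EE\varpi_*$, which by \cite[Corollary~9.2.3]{DK16} and \cite{IT20} yields $\DRE{\Xc}\big(C_X(\cM)(!\overline D)\big)[-d_X]\overset{\sim}{\to}\SolE{X}(\cM(*D))$; the identification $C_X(\cM)\iso C_X(\cM)(!\overline D)$ from Lemma~\ref{lemma:ConjLoc} then finishes the argument. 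Your outline is missing both the blow-up morphism and this $(!\overline D)$ step, and without them the argument does not close.
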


\begin{proof}
	Let us consider the following statement for any complex manifold $X$ and any $\cM\in\DbholD{X}$:
	$$\Prop{\cM}\colon \textnormal{The morphism \eqref{eq:canMorphCX} is an isomorphism.}$$	
	To show that it holds for any $X$ and $\cM$, we will employ Lemma~\ref{lemma:classif}, and	we have to check conditions (a)--(f).
	
	Condition (a) is clear by the compatibility of $\DRE{X}$ and $\SolE{X}$ with inverse images. (Note that the a-priori existence and canonicity of the morphism \eqref{eq:canMorphCX} is essential here.) Condition (b) is also obvious. Condition (c) holds by the axioms of a triangulated category, and condition (d) is true since the morphism \eqref{eq:canMorphCX} is functorial with respect to direct sums.
	
	Condition (e) is proved as follows: Assume that $\Prop{\cM}$ holds, and let $f\colon X\to Y$ be a projective morphism. It also induces a morphism $f\colon \Xc\to \Yc$ (which we will not distinguish in the notation). Then we have
	\begin{align*}
		\SolE{Y}(\DD f_* \cM)&\iso \EE f_* \SolE{X}(\cM)[d_X-d_Y]\\
		&\iso \EE \overline{f}_* \DRE{\Xc}(C_X(\cM))[-d_Y]\\
		&\iso \DRE{\Yc}(\DD \overline{f}_* C_X(\cM))[-d_Y]\\
		&\iso \DRE{\Yc}(C_Y(\DD f_* \cM))[-d_Y],
	\end{align*}
	where the last isomorphism follows from Lemma~\ref{lemma:CXpushforward}.

	It remains to check condition (f): Let $\cM\in\ModholD{X}$ have a good normal form along a normal crossing divisor $D\subset X$ (in particular, $\cM$ is meromorphic at $D$). Without recalling the details (which are not essential for understanding the idea of proof), we use here some notations and results for enhanced de Rham and solution complexes on the real blow-up space (see \cite[§9.2]{DK16}).
	
	Due to Lemma~\ref{lemma:morphDRECSolEbu} below, there is a natural morphism similar to \eqref{eq:canMorphCX} on the real blow-up
	\begin{align}\label{eq:morphDRECSolEbu}
		\DRE{\widetilde{\Xc}}\big(C_X(\cM)^{\overline{\cA}}\big)[-d_X]\to \SolE{\widetilde{X}}(\cM^\cA).
	\end{align}
	
	We want to show that this is an isomorphism.
	Locally on $\widetilde{X}=\widetilde{\Xc}$ (they are identified as real analytic manifolds with corners), $\cM^\cA$ decomposes as a direct sum of exponential D-modules $(\cE^{\varphi})^\cA$, and $C_X(\cM)^{\overline{\cA}}$ decomposes accordingly into exponential D-modules $(\cE^{-\overline{\varphi}})^{\overline{\cA}}$ (cf.\ Proposition~\ref{prop:AnStrCX}), so it suffices to prove this result for $\cM=\cE^\varphi$ with $\varphi\in\Gamma(X;\cO_X(*D))$. (Recall that we have
	$(C_X(\cE^\varphi))^{\overline{\cA}}\iso (\cE^{-\overline{\varphi}})^{\overline{\cA}}$
	by Lemma~\ref{lemma:CXEphi}.)
	
	Then we get	
	\begin{align*}
	\DRE{\widetilde{\Xc}}\big(C_X(\cE^\varphi)^{\overline{\cA}}\big)&\iso \DRE{\widetilde{\Xc}}\big((\cE^{-\overline{\varphi}})^{\overline{\cA}}\big)\\ 
	&\iso \EE\varpi^!\DRE{\Xc}(\cE^{-\overline{\varphi}})\\
	&\iso \C^\EE_{\widetilde{\Xc}}\conv \RIhom(\pi^{-1}\C_{X\setminus D},\C_{\{t=-\real \overline{\varphi}(x)\}})[d_X]
	\end{align*}
	by \cite[Corollary~9.2.3 and Lemma~9.3.1]{DK16}.
	On the other hand, we similarly have
	\begin{align*}
		\SolE{\widetilde{X}}\big((\cE^\varphi)^\cA\big) &\iso \EE\varpi^! \RIhom(\pi^{-1}\C_{X\setminus D},\SolE{X}(\cE^\varphi))\\
		&\iso \C^\EE_{\widetilde{X}} \conv \RIhom(\pi^{-1}\C_{X\setminus D},\C_{\{t=-\real \varphi(x)\}}).
	\end{align*}
	Since conjugation does not affect the real part of a holomorphic function, we see that both sides of \eqref{eq:morphDRECSolEbu} are isomorphic in the case $\cM=\cE^\varphi$.
	
	Applying now $\pi^{-1}\C_{X\setminus D}\otimes \EE\varpi_*$ to \eqref{eq:morphDRECSolEbu}, we obtain (see \cite[Corollary~9.2.3]{DK16} and \cite[§3]{IT20})
	$$\DRE{\Xc}\big(C_X(\cM)(!D)\big)[-d_X]\overset{\sim}{\To} \SolE{X}(\cM(*D)),$$
	and noting that $\cM$ (having good normal form) is meromorphic at $D$, while $C_X(\cM) \iso C_X(\cM)(!D)$ by Lemma~\ref{lemma:ConjLoc}, we obtain the desired isomorphism.
\end{proof}

It remains to prove the following lemma, which has been used in the above proof.

\begin{lemma}\label{lemma:morphDRECSolEbu}
	Let $X$ be a complex manifold and $D\subset X$ a normal crossing divisor and $\widetilde{X}$, $\widetilde{\Xc}$ the associated real blow-up spaces along $D$. Let $\cM\in\DbcohD{X}$. Then there is a canonical morphism
	$$\DRE{\widetilde{\Xc}}\big(C_X(\cM)^{\overline{\cA}}\big)[-d_X] \to \SolE{\widetilde{X}}(\cM^\cA).$$
\end{lemma}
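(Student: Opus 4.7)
The plan is to mimic the proof of Lemma~\ref{lemma:morphDRECSolE}, now carried out on the real oriented blow-up spaces. The spaces $\widetilde{X}$ and $\widetilde{\Xc}$ share the same underlying real analytic manifold with corners, and the projections $\varpi\colon \widetilde{X}\to X$ and $\overline{\varpi}\colon \widetilde{\Xc}\to \Xc$ are proper, so the tempered functional constructions used in Lemma~\ref{lemma:morphDRECSolE} can be transported to the blow-up without essential change.

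First, I unfold the definitions $\cM^\cA=\cA^{\mathrm{mod}}_{\widetilde{X}}\otimes_{\varpi^{-1}\cO_X}\varpi^{-1}\cM$ and $C_X(\cM)^{\overline{\cA}}=\cA^{\mathrm{mod}}_{\widetilde{\Xc}}\otimes_{\varpi^{-1}\cO_{\Xc}}\varpi^{-1}\RHom_{\cD_X}(\cM,\Db{X})$ and use a change-of-rings / adjunction argument to rewrite both sides as $\RHom_{\varpi^{-1}\cD_X}(\varpi^{-1}\cM,-)$ with coefficients in enhanced ind-sheaves on the blow-up. Concretely,
$$\SolE{\widetilde{X}}(\cM^\cA)\iso\RHom_{\varpi^{-1}\cD_X}(\varpi^{-1}\cM,\cO^\EE_{\widetilde{X}}),$$
and analogously for $\DRE{\widetilde{\Xc}}(C_X(\cM)^{\overline{\cA}})$ with coefficient object $\Omega^\EE_{\widetilde{\Xc}}\Ltens{\cD_{\widetilde{\Xc}}}(\cA^{\mathrm{mod}}_{\widetilde{\Xc}}\otimes_{\varpi^{-1}\cO_{\Xc}}\varpi^{-1}\Db{X})$. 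The construction of the sought morphism is thereby reduced to producing a canonical morphism of enhanced ind-sheaves on the common underlying space
$$\Omega^\EE_{\widetilde{\Xc}}\Ltens{\cD_{\widetilde{\Xc}}}\big(\cA^{\mathrm{mod}}_{\widetilde{\Xc}}\otimes_{\varpi^{-1}\cO_{\Xc}}\varpi^{-1}\Db{X}\big)[-d_X]\To \cO^\EE_{\widetilde{X}}.$$

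Next, I replicate the second part of the proof of Lemma~\ref{lemma:morphDRECSolE} on the blow-up. Via the morphisms of bordered spaces $\widetilde{X}\times\R_\infty\xrightarrow{\,j\,}\widetilde{X}\times\sP\xrightarrow{\,k\,}\widetilde{X}\times\PP$, together with the evident lift of Lemma~\ref{lemma:DbtXP} (whose proof uses only the abstract compatibility between $k^!$ and Dolbeault resolutions and is insensitive to replacing $X$ by $\widetilde{X}$), the problem is further reduced to producing a canonical morphism of subanalytic ind-sheaves on $\widetilde{X}\times\PP$
$$\Cinft{\widetilde{X}\times\PP}\Ltens{\beta p^{-1}\cA^{\mathrm{mod}}_{\widetilde{X}}}\beta p^{-1}\big(\cA^{\mathrm{mod}}_{\widetilde{\Xc}}\otimes_{\varpi^{-1}\cO_{\Xc}}\varpi^{-1}\Db{X}\big) \to \Dbt{\widetilde{X}\times\PP}.$$
This last morphism is obtained by composing the natural pullback $\varpi^{-1}\Db{X}\to\Dbt{\widetilde{X}}$ (whose existence relies on $\varpi$ being proper, so that a distribution on $X$ extends tamely across $\partial\widetilde{X}$ and inherits tempered behaviour), the moderate-growth multiplication $\cA^{\mathrm{mod}}_{\widetilde{\Xc}}\otimes \Dbt{\widetilde{X}}\to \Dbt{\widetilde{X}}$, extension by the constant distribution $1$ along $\PP$ (as in the proof of Lemma~\ref{lemma:morphDRECSolE}), and finally the multiplication pairing of tempered smooth functions with tempered distributions on $\widetilde{X}\times\PP$ provided by \cite[Lemma~3.3]{KasRHreg} applied on the blow-up.

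The step I expect to be the main obstacle is Step~1, namely setting up the two $\RHom_{\varpi^{-1}\cD_X}$-identifications rigorously for general $\cM\in\DbcohD{X}$. The $\SolE{\widetilde{X}}$-side follows readily from a change-of-rings argument using the flatness of $\cA^{\mathrm{mod}}_{\widetilde{X}}$ over $\varpi^{-1}\cO_X$. The $\DRE{\widetilde{\Xc}}$-side requires interchanging $\varpi^{-1}$ with $\RHom_{\cD_X}(\cM,-)$ and commuting the tensor product with $\cA^{\mathrm{mod}}_{\widetilde{\Xc}}$ past the internal $\RHom$; this is a standard tensor–hom swap, but has to be executed with care since $\cM$ is merely coherent and no meromorphy or holonomy hypothesis is available here. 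Once those identifications are in place, the rest of the argument is a faithful transcription of the construction of Lemma~\ref{lemma:morphDRECSolE} to the blow-up setting.
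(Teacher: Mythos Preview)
Your approach is correct and runs parallel to the paper's, but the paper organizes the argument slightly differently by introducing Sabbah's sheaf $\mathcal{D}b^{\mathrm{mod}\, D}_{\widetilde{X}}\iso \varpi^{-1}\cO_X(*D)\otimes_{\varpi^{-1}\cO_X}\Db{\widetilde{X}}$ as an intermediate object. Concretely, the paper first writes down a canonical morphism
\[
C_X(\cM)^{\overline{\cA}}\;\longrightarrow\;\RHom_{\cD_{\widetilde{X}}^\cA}\bigl(\cM^\cA,\mathcal{D}b^{\mathrm{mod}\, D}_{\widetilde{X}}\bigr),
\]
obtained simply from $\varpi^{-1}\Db{X}\to \mathcal{D}b^{\mathrm{mod}\, D}_{\widetilde{X}}$ together with the $\cA^{\mathrm{mod}}_{\widetilde{\Xc}}$-module structure on the target. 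This single step replaces your Step~1 and sidesteps the tensor--hom swap you flag as delicate: one only needs a morphism, and the $\cA^{\mathrm{mod}}_{\widetilde{\Xc}}$-action is absorbed by $\mathcal{D}b^{\mathrm{mod}\, D}_{\widetilde{X}}$ rather than commuted past an $\RHom$. After that, the paper reduces (as in Lemma~\ref{lemma:morphDRECSolE}) to a morphism $\varpi^{-1}\Omega_{\Xc}\Ltens{\varpi^{-1}\cD_{\Xc}}(\OE{\widetilde{\Xc}}\Ltens{\cO_{\Xc}} \mathcal{D}b^{\mathrm{mod}\,D}_{\widetilde{X}})\to \OE{\widetilde{X}}$, and since the enhanced objects on the blow-up are defined by pullback from $X$, the final multiplication map lives on $X\times\PP$, namely $\Cinft{X\times\PP}\Ltens{\beta\cO_{\Xc}} \Dbt{X}\to \Dbt{X\times\PP}$.

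The practical difference is that you stay on $\widetilde{X}$ throughout and therefore need $\Cinft{\widetilde{X}\times\PP}$, $\Dbt{\widetilde{X}}$, and a lifted Lemma~\ref{lemma:DbtXP} on a manifold with corners; this can be made to work, but your ``$\Dbt{\widetilde{X}}$'' together with the moderate-growth multiplication by $\cA^{\mathrm{mod}}_{\widetilde{\Xc}}$ is essentially $\mathcal{D}b^{\mathrm{mod}\, D}_{\widetilde{X}}$ in disguise. The paper's routing buys cleaner bookkeeping and lets the endgame reuse the exact morphism already built in Lemma~\ref{lemma:morphDRECSolE}, at the cost of invoking one extra object from \cite{SabAst}.
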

\begin{proof}
	Again, we use the notation in \cite{SabAst}, in particular the sheaf $$\mathcal{D}b^{\mathrm{mod}\, D}_{\widetilde{X}}\iso \varpi^{-1}\cO_X(*D)\otimes_{\varpi^{-1}\cO_X} \Db{\widetilde{X}}.$$
	
	First, note that there is a canonical morphism
	\begin{align*}
		C_X(\cM)^{\overline{\cA}} &\iso \RHom_{\varpi^{-1}\cD_X}(\varpi^{-1}\cM,\cA^{\mathrm{mod}}_{\widetilde{\Xc}}\otimes_{\varpi^{-1}\cO_{\Xc}} \varpi^{-1}\Db{X})\\
		&\to \RHom_{\cD_{\widetilde{X}}^\cA}(\cM^\cA,\mathcal{D}b^{\mathrm{mod}\, D}_{\widetilde{X}})
	\end{align*}
	for $\cM\in\DbcohD{X}$, induced by the natural morphism $\varpi^{-1}\Db{X}\to\mathcal{D}b^{\mathrm{mod}\, D}_{\widetilde{X}}$ and the fact that $\mathcal{D}b^{\mathrm{mod}\, D}_{\widetilde{X}}$ is a module over $\cA^{\mathrm{mod}}_{\widetilde{\Xc}}$.
	
	It therefore suffices to find a morphism
	$$\DRE{\widetilde{\Xc}}\big(\RHom_{\cD_{\widetilde{X}}^\cA}(\cM^\cA,\mathcal{D}b^{\mathrm{mod}\, D}_{\widetilde{X}_\R})\big)[-d_X] \to \SolE{\widetilde{X}}(\cM^\cA).$$
	
	Similarly to the proof of Lemma~\ref{lemma:morphDRECSolE}, it is enough to give a morphism
	$$\varpi^{-1}\Omega_{\Xc}\Ltens{\varpi^{-1}\cD_{\Xc}}(\OE{\widetilde{\Xc}}\Ltens{\cO_{\Xc}} \cD b^{\mathrm{mod}\,D}_{\widetilde{X}_\R})\To \OE{\widetilde{X}}.$$
	By a reduction similar to that in Lemma~\ref{lemma:morphDRECSolE}, such a morphism is eventually induced by a morphism
	$$\Cinft{X\times\PP}\Ltens{\beta\cO_{\Xc}} \Dbt{X}\To \Dbt{X\times\PP},$$
	which is given by multiplication of tempered smooth functions with tempered distributions.	
	We leave details to the reader.
\end{proof}

With this in hand, we can now proceed completely analogously to \cite{KasConj} to define the conjugation functor $c$, the functor corresponding to complex conjugation on constructible sheaves via the Riemann--Hilbert correspondence.

We define
\begin{align*}
c\colon \DbD{X} &\to \DbD{X}\\
\cM&\mapsto c(\cM) \vcentcolon= C_{\Xc}(\D_{\Xc}(\overline{\cM})),
\end{align*}
(recall the definition of $\overline{\cM}$ from Section~\ref{subsec:GalConj}).

\begin{thm}\label{prop:DREconj}
	Let $X$ be a complex manifold and $\cM\in\DbholD{X}$. Then there is an isomorphism in $\EbIC{X}$
	$$\DRE{X}(c(\cM)) \iso \overline{\DRE{X}(\cM)}.$$
	Similarly, there is an isomorphism $\SolE{X}(c(\cM)) \iso \overline{\SolE{X}(\cM)}$.
\end{thm}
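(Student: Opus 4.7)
The plan is to imitate Kashiwara's deduction in \cite{KasConj} of the regular formula $\DR{X}(c(\cM))\iso\overline{\DR{X}(\cM)}$ from the intermediate identity $\DR{\Xc}\circ C_X[-d_X]\iso\Sol{X}$, but with the intermediate identity replaced by its enhanced analogue in Proposition~\ref{prop:DRECSolEIso}. The remaining ingredients are the enhanced duality relation $\DRE{Y}(\D_Y\cN)\iso\SolE{Y}(\cN)[d_Y]$ recalled in Section~\ref{sec:Holonomic}, the compatibility $\DRE{\Yc}(\overline{\cN})\iso\overline{\DRE{Y}(\cN)}$ from Section~\ref{subsec:GalConj}, and involutivity of D-module duality on holonomic objects.

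For the statement about $\DRE{X}$, I first apply Proposition~\ref{prop:DRECSolEIso} with the complex manifold $X$ replaced by $\Xc$ (using $\overline{\Xc}=X$ and $d_{\Xc}=d_X$) and the holonomic module $\D_{\Xc}\overline{\cM}\in\DbholD{\Xc}$ in place of $\cM$. This yields
$$\DRE{X}(c(\cM))[-d_X]=\DRE{X}\bigl(C_{\Xc}(\D_{\Xc}\overline{\cM})\bigr)[-d_X]\iso\SolE{\Xc}(\D_{\Xc}\overline{\cM}).$$
Rewriting the duality formula $\DRE{\Xc}(\D_{\Xc}\cN)\iso\SolE{\Xc}(\cN)[d_X]$ with $\cN=\D_{\Xc}\overline{\cM}$ and invoking $\D_{\Xc}\D_{\Xc}\iso\id$ on holonomic modules, I get $\SolE{\Xc}(\D_{\Xc}\overline{\cM})\iso\DRE{\Xc}(\overline{\cM})[-d_X]$. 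Finally, Section~\ref{subsec:GalConj} gives $\DRE{\Xc}(\overline{\cM})\iso\overline{\DRE{X}(\cM)}$. Splicing the three isomorphisms and shifting by $[d_X]$ produces the desired $\DRE{X}(c(\cM))\iso\overline{\DRE{X}(\cM)}$.

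For the companion statement about $\SolE{X}$, I would apply the just-proved isomorphism to $\D_X\cM$ in place of $\cM$. One first checks that $c$ commutes with $\D_X$ on holonomic objects: both $\D_X c(\cM)$ and $c(\D_X\cM)$ are isomorphic to $C_{\Xc}(\overline{\cM})$, using the identity $\D_X\circ C_{\Xc}\iso C_{\Xc}\circ\D_{\Xc}$ on holonomic D-modules (a standard property of Hermitian duality used already in \cite{KasConj}), together with $\overline{\D_X\cM}\iso\D_{\Xc}\overline{\cM}$ and $\D_{\Xc}\D_{\Xc}\iso\id$. Then, combining $\DRE{X}(\D_X\cM)\iso\SolE{X}(\cM)[d_X]$ on both sides gives $\SolE{X}(c(\cM))\iso\overline{\SolE{X}(\cM)}$.

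No serious obstacle is expected at this stage: all the hard D-module-theoretic content — including the appeal to the Kedlaya--Mochizuki classification via the reduction scheme of Lemma~\ref{lemma:classif} — has already been absorbed into Proposition~\ref{prop:DRECSolEIso}. What remains is a purely formal chain of functorial isomorphisms, essentially the same bookkeeping as in \cite{KasConj} once its intermediate identity is upgraded to its enhanced version. The only point deserving care is to verify the compatibilities of the canonical morphism with the replacements $X\leadsto\Xc$ and $\cM\leadsto\D_{\Xc}\overline{\cM}$, which is straightforward from the functoriality built into Lemma~\ref{lemma:morphDRECSolE}.
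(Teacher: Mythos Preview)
Your argument for the first isomorphism is exactly the paper's: apply Proposition~\ref{prop:DRECSolEIso} on $\Xc$ to $\D_{\Xc}\overline{\cM}$, then use the enhanced duality relation and the conjugation compatibility from Section~\ref{subsec:GalConj}. You even track the shifts more carefully than the paper's displayed chain does.

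For the second isomorphism your route diverges slightly. The paper simply applies the enhanced duality functor $\DE{X}$ to both sides of the first isomorphism and invokes the fact that complex conjugation on $\EbIC{X}$ commutes with $\DE{X}$ (citing \cite[Lemma~2.3]{BHHS22}); everything then stays on the topological side. You instead substitute $\D_X\cM$ for $\cM$ and work on the D-module side, which forces you to prove $c\circ\D_X\iso\D_X\circ c$, reducing in turn to $\D_X\circ C_{\Xc}\iso C_{\Xc}\circ\D_{\Xc}$. This identity is true and known (it is implicit in the self-duality properties of $\Db{X}$ and appears in Sabbah's and Mochizuki's treatments of Hermitian duality), but your attribution to \cite{KasConj} is a bit optimistic: that short note does not state it explicitly. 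The paper's approach avoids this detour entirely and is more self-contained; your approach has the mild advantage of making the D-module-level symmetry $c\D_X\iso\D_X c$ explicit, which is of independent interest.
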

\begin{proof}
	It is easy to check that
	\begin{align*}
		\DRE{X}(C_{\Xc}(\D_{\Xc}(\overline{\cM})))&\iso \SolE{\Xc}(\D_{\Xc}(\overline{\cM}))\\
		&\iso \DRE{\Xc}(\overline{\cM})\iso \overline{\DRE{X}(\cM)},
	\end{align*}
which follows from Proposition~\ref{prop:DRECSolEIso}.

The second claim follows by applying the duality functor (and a shift by $-d_X$) since complex conjugation commutes with duality (cf.\ \cite[Lemma~2.3]{BHHS22}).
\end{proof}

To end this section, let us come back to the proof of Kashiwara's conjecture \cite[Remark~3.5]{KasConj}.
The following theorem is the first and main part of this conjecture.
\begin{thm}[{see \cite[Theorem 3.1.2]{SabAst}, \cite[Corollary~4.19]{MocStokesMero}}]\label{thm:SabbahCXhol}
	Let $X$ be a complex manifold and $\cM\in \ModholD{X}$ a holonomic $\cD_X$-module. Then $$C_X(\cM)= \RR\Hom_{\cD_X}(\cM,\Db{X})$$ is concentrated in degree $0$ and holonomic.
\end{thm}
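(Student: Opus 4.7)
The plan is to apply the classification lemma~\ref{lemma:classif} to the statement
$$\Prop{\cM}\colon\ \text{all cohomology sheaves } \mathcal{H}^i(C_X(\cM)) \text{ are holonomic},$$
for $\cM\in\DbholD{X}$, with concentration in degree $0$ when $\cM\in\ModholD{X}$ handled separately at the end. Conditions (a)--(d) are routine: (a) is clear because $\RR\Hom_{\cD_X}(-,\Db{X})$ is defined sheaf-locally on $X$; (b) is obvious since $C_X(\cM[n])\iso C_X(\cM)[-n]$; (c) follows from the long exact sequence of $\RR\Hom$ combined with the stability of holonomicity under extensions; (d) is immediate, as direct summands of holonomic modules are holonomic. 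Condition (e) is exactly Lemma~\ref{lemma:CXpushforward}, together with the fact that $\DD\overline{f}_{*}$ preserves holonomicity for $f$ proper on the support of $\cM$.

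The main obstacle is condition (f), the case of good normal form. Given $\cM\in\ModholD{X}$ with a good normal form along a normal crossing divisor $D\subset X$, Proposition~\ref{prop:AnStrCX} provides, locally on the real blow-up $\widetilde{\Xc}$, a decomposition
$$C_X(\cM)^{\overline{\cA}}\iso\bigoplus_{i\in I}(\cE^{-\overline{\varphi_i}})^{\overline{\cA}}.$$
The hard step is to descend this asymptotic structure from $\widetilde{\Xc}$ back to $\Xc$ itself: combining with Lemma~\ref{lemma:ConjLoc}, which gives $C_X(\cM)\iso C_X(\cM)(!\overline{D})$, one must show that a holonomic $\cD_{\Xc}$-module with the prescribed asymptotic decomposition on the blow-up actually exists and is uniquely determined. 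This is the Stokes-theoretic gluing argument carried out in \cite{SabAst} and \cite{MocStokesMero}, and it is the delicate heart of the proof; without it, the blow-up description is only formal data and does not automatically produce a holonomic object downstairs.

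Finally, to upgrade ``holonomic cohomology sheaves'' to ``concentrated in degree $0$'' when $\cM\in\ModholD{X}$, one exploits the non-degenerate Hermitian pairing $\cM\otimes_{\C_X}\overline{C_X(\cM)}\to\Db{X}$ underlying the $\cD$-triple formalism of \cite{MocMT}. On the smooth locus of $\cM$ the pairing identifies $\mathcal{H}^0(C_X(\cM))$ with the local system of antiholomorphic horizontal sections and forces $\mathcal{H}^i(C_X(\cM))=0$ for $i\neq 0$; the local analytic description of $C_X(\cM)^{\overline{\cA}}$ near $D$ obtained above then propagates this vanishing across the singular locus, yielding $C_X(\cM)\in\ModholD{\Xc}$ as claimed.
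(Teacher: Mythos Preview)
The paper does not give its own proof of Theorem~\ref{thm:SabbahCXhol}; it is stated as a result of Sabbah and Mochizuki, with citations to \cite[Theorem~3.1.2]{SabAst} and \cite[Corollary~4.19]{MocStokesMero}, and is then used as input for the subsequent corollary. So there is no in-paper argument to compare against, and your proposal is really a sketch of how those external references proceed.

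As an outline of the Sabbah--Mochizuki approach your sketch is broadly faithful: one reduces to meromorphic connections with good formal structure, establishes the asymptotic decomposition of the Hermitian dual on the real blow-up (the content of Proposition~\ref{prop:AnStrCX}), and then glues via the Stokes data to obtain a genuine holonomic object on $\Xc$. You are right that this gluing step is the heart of the matter and is precisely what \cite{SabAst} and \cite{MocStokesMero} supply.

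That said, your packaging via Lemma~\ref{lemma:classif} and the lemmas of this paper introduces circularity. Lemma~\ref{lemma:ConjLoc} is proved in the paper using the $\cD$-triple formalism of \cite{MocMT}, which treats $\overline{C_X(\cM)}$ as an honest $\cD_X$-module in a non-degenerate pairing; this already presupposes concentration in degree $0$. Similarly, Lemma~\ref{lemma:CXpushforward} is cited from \cite[\S 12.5.2]{MocMT}, a framework that builds on Theorem~\ref{thm:SabbahCXhol}. If you want a self-contained reduction scheme, you would need versions of these compatibilities that do not assume the theorem you are proving --- they exist (e.g.\ the pushforward compatibility is essentially a projection formula for distributions), but are not what the paper cites. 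Finally, your last paragraph on degree-$0$ concentration is too informal: ``propagating vanishing across the singular locus'' is exactly the delicate point, and without the Stokes-theoretic analysis it is an assertion, not an argument.
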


The other parts of the original conjecture of M.\ Kashiwara follow from this theorem (see \cite[§3.1]{SabAst}). They are stated in the (first part of the) following corollary, and with the help of our results from this section, we give a different way of deducing it from Theorem~\ref{thm:SabbahCXhol}.
\begin{cor}
	The functor $C_X\colon \DbholD{X}\to \DbholD{\Xc}$ is an equivalence of categories with inverse $C_{\Xc}$. The functor $c\colon \DbholD{X}\to\DbholD{X}$ is an equivalence of categories and is its own inverse.
\end{cor}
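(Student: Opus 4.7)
The plan is to combine Theorem~\ref{thm:SabbahCXhol}, Proposition~\ref{prop:DRECSolEIso} and Theorem~\ref{prop:DREconj} with the full faithfulness of the enhanced de Rham functor $\DRE{X}$ on $\DbholD{X}$ proved in \cite{DK16}. First, I would note that Theorem~\ref{thm:SabbahCXhol} ensures that $C_X$ and $C_{\Xc}$ restrict to functors $\DbholD{X}\to\DbholD{\Xc}$ and $\DbholD{\Xc}\to\DbholD{X}$, respectively, and that consequently $c = C_{\Xc}\circ \D_{\Xc}\circ\overline{(\cdot)}$ is a well-defined endofunctor of $\DbholD{X}$, since $\D_{\Xc}$ and $\overline{(\cdot)}$ are involutive auto-equivalences preserving holonomicity.

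Next, to establish that $C_{\Xc}$ is a quasi-inverse of $C_X$, I would construct, for $\cM\in\DbholD{X}$, a natural isomorphism $\DRE{X}(C_{\Xc}(C_X(\cM))) \iso \DRE{X}(\cM)$ and then invoke full faithfulness of $\DRE{X}$ to lift it to a natural isomorphism $C_{\Xc}\circ C_X \iso \id$. The construction at the level of $\DRE{X}$ is formal: one applies Proposition~\ref{prop:DRECSolEIso} twice (once with the roles of $X$ and $\Xc$ swapped, so as to replace the outer $C_{\Xc}$ by $\SolE{\Xc}$, and once in its original form to rewrite $\DRE{\Xc}(C_X(\cM))$), and then combines this with the duality relation $\DE{X}\DRE{X}(\cM) \iso \SolE{X}(\cM)[d_X]$ and the involutivity of $\DE{X}$ on $\R$-constructible enhanced ind-sheaves; all the shifts cancel. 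The other identity $C_X\circ C_{\Xc}\iso \id$ follows by the symmetric argument with $X$ and $\Xc$ exchanged.

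For the statement about $c$, two applications of Theorem~\ref{prop:DREconj} combined with the involutivity of complex conjugation yield a chain of natural isomorphisms
$$\DRE{X}(c(c(\cM))) \iso \overline{\DRE{X}(c(\cM))} \iso \overline{\overline{\DRE{X}(\cM)}} \iso \DRE{X}(\cM),$$
which full faithfulness of $\DRE{X}$ lifts to $c\circ c \iso \id$ on $\DbholD{X}$. In particular, $c$ is an equivalence with itself as quasi-inverse. The main obstacle I anticipate is the careful bookkeeping of shifts and dualities in the computation for $C_{\Xc}\circ C_X$; however, this is purely formal once Proposition~\ref{prop:DRECSolEIso} and the duality compatibility between $\DRE{X}$ and $\SolE{X}$ are at hand.
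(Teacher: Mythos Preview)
Your proposal is correct and follows essentially the same route as the paper's own proof: reduce both $C_{\Xc}\circ C_X\iso\id$ and $c\circ c\iso\id$ to isomorphisms after applying $\DRE{X}$, using Proposition~\ref{prop:DRECSolEIso} (twice, with the duality relation and involutivity of $\DE{X}$) for the first and Theorem~\ref{prop:DREconj} (twice) for the second, then appeal to full faithfulness of $\DRE{X}$ from \cite{DK16}. The paper's computation for $C_{\Xc}\circ C_X$ is written out as the explicit chain $\DRE{X}(C_{\Xc}(C_X(\cM)))\iso\SolE{\Xc}(C_X(\cM))\iso\DE{\Xc}\DRE{\Xc}(C_X(\cM))\iso\DE{X}\SolE{X}(\cM)\iso\DRE{X}(\cM)$, which is exactly what your description unfolds to.
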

\begin{proof}
	Let $\cM\in\DbholD{X}$. The fact that $C_X(\cM)\in\DbholD{\Xc}$ and hence $c(\cM)\in\DbholD{X}$ follows easily from Theorem~\ref{thm:SabbahCXhol} by induction on the amplitude of a holonomic complex. Then we have
	\begin{align*}
		\DRE{X}(C_{\Xc}(C_X(\cM))) &\iso \SolE{\Xc}(C_X(\cM)) \iso \DD_{\Xc}\DRE{\Xc}(C_X(\cM))\\
		&\iso \DD_{X}\SolE{X}(\cM)\iso \DRE{X}(\cM)
	\end{align*}
	by Proposition~\ref{prop:DRECSolEIso} and
	\begin{align*}
		\DRE{X}\big(c(c(\cM))\big)\iso \overline{\DRE{X}(c(\cM))}\iso \DRE{X}(\cM)
	\end{align*}
	by Theorem~\ref{prop:DREconj}, and by the Riemann--Hilbert correspondence of \cite{DK16}, we get the isomorphisms $C_{\Xc}(C_X(\cM))\iso \cM$ and $c(c(\cM))\iso\cM$.
\end{proof}

\section{Galois descent for enhanced ind-sheaves}\label{sec:GaloisDescentEb}

In this section, we give some complements on the study of Galois descent for enhanced ind-sheaves done in \cite{BHHS22}. We mostly reformulate what has been established there, slightly generalizing the descent statement by removing the compactness assumption that was present in \cite[Proposition 2.15]{BHHS22}. We mainly study $\R$-constructible enhanced ind-sheaves here.

The starting point is the functor of extension of scalars: Let $\cX$ be a bordered space and let $L/K$ be a field extension, then we have the functor
$$\EbIK{\cX}\to \EbIL{\cX}, \quad H\mapsto \pi^{-1}L_X \otimes_{\pi^{-1}K_X} H.$$
Its compatibilities with direct and inverse images have been described in \cite{BHHS22}. We restate them here, removing some restrictions that are not necessary.
\begin{lemma}\label{lemma:compIm}
	Let $\cX$ and $\cY$ be bordered spaces and let $f\colon \cX\to\cY$ be a morphism. Let $F,F_1,F_2\in\EbIK{\cX}$ and $G\in\EbIK{\cY}$.
	Then we have isomorphisms
	\begin{align*}
	\piLK{\cY}\EE f_{!!} F &\iso \EE f_{!!} (\piLK{\cX}F)\\
	\piLK{\cX}\EE f^{-1} G &\iso \EE f^{-1}(\piLK{\cY}G).
	\end{align*}
If $\cX$ and $\cY$ are real analytic bordered spaces, $f$ is a morphism of real analytic bordered spaces and $F\in\EbRcIK{\cX}$, $G\in\EbRcIL{\cY}$, we also have isomorphisms
	\begin{align*}
		\piLK{\cX} \DE{\cX} F&\iso \DE{\cX} (\piLK{\cX}F),\\
		\piLK{\cX}\EE f^{!} G&\iso \EE f^{!}(\piLK{\cY}G).
	\end{align*}
If moreover $f$ is semi-proper, we have an isomorphism
	\begin{align*}
	\piLK{\cY}\EE f_{*} F&\iso \EE f_{*} (\piLK{\cX}F).
\end{align*}
\end{lemma}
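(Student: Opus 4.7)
The plan is to treat the five compatibilities in turn, deriving the later ones from the earlier ones via standard properties of the six operations on enhanced ind-sheaves.

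For the isomorphism involving $\EE f_{!!}$, I would use that $L_X \iso f^{-1}L_Y$ (the inverse image of a constant sheaf is constant), so that
$$\piLK{\cX} F \iso \pi^{-1}L_X \otimes_{\pi^{-1}K_X} F \iso \pi^{-1} f^{-1} L_Y \otimes F,$$
where the tensor product over $\pi^{-1}K_X$ agrees with the ordinary tensor product in $\EbIK{\cX}$ because $K$ is the base field of coefficients. The desired commutation is then a direct consequence of the projection formula for $\EE f_{!!}$ applied to the sheaf $L_Y$, viewed as a sheaf of $K_Y$-modules on $\cY$. The isomorphism for $\EE f^{-1}$ is immediate, since $\EE f^{-1}$ commutes with tensor products and with inverse images of constant sheaves.

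The key new point, which is what permits dropping the compactness hypothesis present in \cite{BHHS22}, is the commutation of scalar extension with the duality functor $\DE{\cX}$ in the $\R$-constructible setting. My approach would be to reduce to ``building blocks'' of the form $\pi^{-1}\cF \otimes \E^f_{W|\cX}$, where $\cF$ is an $\R$-constructible sheaf of finite-dimensional $K$-vector spaces and $\E^f_{W|\cX}$ is an exponential enhanced ind-sheaf, using stability in distinguished triangles together with the local structure theory of $\R$-constructible enhanced ind-sheaves (both $\DE{\cX}$ and the scalar-extension functor are exact up to a shift on the heart $\EzRcIk{\cX}$). On such building blocks the duality is explicit, and flatness of $L$ over $K$ then yields the compatibility. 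I expect the main obstacle to be the case where $L/K$ is infinite-dimensional: to handle it, one writes $L = \varinjlim_i V_i$ as the filtered colimit of its finite-dimensional $K$-subspaces and uses that, on $\R$-constructible enhanced ind-sheaves, the functor $\RIhom^+(-,\e{K}{\cX}(\omega_X))$ commutes with such filtered colimits.

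Granted the compatibilities with $\DE{\cX}$ and with $\EE f^{-1}$, the isomorphism for $\EE f^!$ on $\R$-constructible objects follows from the identity $\EE f^! \iso \DE{\cX}\circ \EE f^{-1}\circ \DE{\cY}$. Finally, for a semi-proper morphism $f$ one has $\EE f_{*} \iso \EE f_{!!}$ on all of $\EbIK{\cX}$, so the last isomorphism reduces to the first.
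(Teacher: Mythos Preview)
The paper does not include a proof of this lemma; it merely states the result, pointing to \cite{BHHS22} for the original arguments and noting that some hypotheses have been relaxed. So there is no detailed proof in the paper to compare against. That said, your proposal contains a genuine error in the final step.

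Your claim that ``for a semi-proper morphism $f$ one has $\EE f_{*} \iso \EE f_{!!}$ on all of $\EbIK{\cX}$'' is false. A simple counterexample is the canonical morphism $j_{\cX}\colon \cX=(X,\widehat{X})\to \widehat{X}$: the closure of its graph is the diagonal over $\overline{X}\subset\widehat{X}\times\widehat{X}$, whose second projection is a homeomorphism onto a closed subset and hence proper, so $j_{\cX}$ is semi-proper; yet $\EE {j_{\cX}}_{!!}$ and $\EE {j_{\cX}}_{*}$ are the two different extension functors and do not agree in general. Semi-properness is precisely the hypothesis under which $\EE f_{!!}$ preserves $\R$-constructibility and Verdier duality holds, not the hypothesis under which $\EE f_{!!}$ and $\EE f_*$ coincide. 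The correct route to the last isomorphism is therefore the duality relation
\[
\EE f_* F \;\iso\; \DE{\cY}\,\EE f_{!!}\,\DE{\cX} F
\]
for $F\in\EbRcIK{\cX}$ and $f$ semi-proper (see \cite[Theorem~3.3.3]{DK19}), after which the compatibility follows from the already-established isomorphisms for $\DE{}$ and $\EE f_{!!}$.

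A smaller remark: your treatment of the duality case is rather sketchy. The phrase ``$\RIhom^+(-,\e{K}{\cX}(\omega_X))$ commutes with such filtered colimits'' is confused about variance, since this functor is contravariant in the displayed slot. What one actually needs is that extension of scalars commutes with $\RIhom^+$ on $\R$-constructible objects; this is essentially \cite[Lemma~2.7]{BHHS22} (mentioned in the paper just after Proposition~\ref{prop:HomEext}), and the duality statement follows from it directly without any colimit gymnastics.
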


\begin{rem}
	The condition for the isomorphism for the direct image $\EE f_*$ is in particular satisfied if $X\subseteq \widehat{X}$ and $Y\subseteq\widehat{Y}$ are relatively compact. 
	This means that the bordered spaces $\cX$ and $\cY$ are of a particular type, namely they are b-analytic manifolds and $F$ and $G$ are b-constructible, notions that have been investigated in \cite{Sch23}.
	
	Such functorialities are, of course, not restricted to the tensor product with a field extension, but this is the case we are interested in here. One could also try to perform a study of functorialities for more general sheaves $L_X$, similar to \cite{HS23}.
\end{rem}

In order to understand homomorphisms between scalar extensions of enhanced ind-sheaves, we prove the following two statements.

\begin{prop}\label{prop:HomEext}
	Let $L/K$ be a field extension. Let $\cX$ be a real analytic bordered space 
	and let $F,G\in\EbRcIK{\cX}$. Then there is an isomorphism in $\DbL{X}$
	$$\RHomE_{L_\cX}(\piLK{\cX}F,\piLK{X}G)\iso L_X\otimes_{K_X}\RHomE_{K_\cX}(F,G).$$
\end{prop}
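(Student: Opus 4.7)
The plan is to construct a canonical comparison morphism from the right-hand side to the left-hand side and then verify it is an isomorphism locally on $X$, by reducing through standard dévissage to a classical compatibility of $\RHom$ with scalar extension for $\R$-constructible sheaves.

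First, I would build the natural morphism
$$L_X\otimes_{K_X}\RHomE_{K_\cX}(F,G)\To \RHomE_{L_\cX}(\piLK{\cX}F,\piLK{\cX}G)$$
using that $\RHomE$ is characterized by a universal evaluation pairing, and that the exact functor $\piLK{\cX}(-)$ is compatible with the operations entering the definition of $\RHomE$ (the internal hom $\RIhom^+$, the pushforward to $X$, and the sheafification via $\e{K}{\cX}$). The compatibilities recorded in Lemma~\ref{lemma:compIm} are exactly what is needed here.

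Second, I would verify that this morphism is an isomorphism by dévissage on $F$. Since both sides are compatible with distinguished triangles and direct summands, and since $F\in\EbRcIK{\cX}$ admits, at least locally, a presentation in terms of the standard building blocks $\e{K}{\cX}(\cF)\otimes \E^{f}_{W|\cX,K}$ with $\cF\in\DbRcK{X}$ and $(W,f)$ subanalytic data, it suffices to verify the claim for such building blocks. In that case, by the construction of $\RHomE$ from $\RIhom$ recalled in \cite{DK16, DK19}, the computation of both sides can be pushed to ordinary $\RIhom$ in the category of (subanalytic) ind-sheaves on $\widehat{X}\times\overline{\R}$. There the statement reduces to the classical fact that for $\cF\in\DbRcK{Y}$ and $\cG\in\Dbk{Y}$ on a real analytic manifold $Y$,
$$L_Y\otimes_{K_Y}\RHom_{K_Y}(\cF,\cG)\iso \RHom_{L_Y}(L_Y\otimes_{K_Y}\cF,L_Y\otimes_{K_Y}\cG).$$

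The classical fact itself holds because $L$ is free, hence flat, over $K$, so $L_Y\otimes_{K_Y}(-)$ is exact and commutes with products. Locally on $Y$, an $\R$-constructible $\cF$ is a finite complex whose terms are finite direct sums of sheaves $K_Z$ for $Z\subset Y$ locally closed subanalytic, and $\RHom_{K_Y}(K_Z,\cG)\iso \RR\Gamma_Z(\cG)$ commutes tautologically with $L_Y\otimes_{K_Y}(-)$. The main obstacle in the proof is the bookkeeping in the second step: one must carefully track the interaction of $\piLK{\cX}(-)$ with the quotient construction $\EbIK{\cX}=\DbIK{\cX\times\R_\infty}/\pi_\cX^{-1}\DbIK{\cX}$ and with the definition of $\RHomE$, so as to justify reducing the enhanced statement to an unenhanced one. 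Once this is in place, the classical compatibility on building blocks, extended by dévissage, yields the desired isomorphism.
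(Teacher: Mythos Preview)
Your overall strategy---construct a canonical comparison morphism and verify it is an isomorphism locally on $X$---matches the paper's. The reduction step, however, differs, and your version has a gap. You propose a d\'evissage of $F$ into building blocks of the form $\e{K}{\cX}(\cF)\otimes \E^{f}_{W|\cX,K}$ with $\cF\in\DbRcK{X}$; but it is not clear (and not a standard result) that such objects generate $\EbRcIK{\cX}$ under triangles and summands. An $\R$-constructible enhanced ind-sheaf is locally of the form $K^\EE_\cX\conv\cF$ for an arbitrary $\cF\in\DbRcK{\cX\times\R_\infty}$, and the objects $K^\EE_\cX\conv K_Z$ for general locally closed subanalytic $Z\subset X\times\R$ are not obviously in the triangulated hull of your proposed building blocks. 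Moreover, even after such a reduction the second argument of $\RHomE$ still involves the genuine ind-sheaf $K^\EE_\cX\conv\cG$, so the final step cannot be the purely classical sheaf statement you write; one needs the ind-sheaf version of the $\RIhom$ compatibility.

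The paper avoids both issues by not d\'evissing at the enhanced level at all. On a relatively compact subanalytic open one writes $F\iso K^\EE_\cX\conv\cF$ and $G\iso K^\EE_\cX\conv\cG$ with $\cF,\cG\in\DbRcK{\cX\times\R_\infty}$ (normalized so that $K_{\{t\geq 0\}}\conv\cF\iso\cF$), then uses \cite[Lemma~4.5.15]{DK16} to rewrite $\RHomE$ as $\alpha_\cX\RR\pi_{\cX*}\RIhom(\cF,K^\EE_\cX\conv\cG)$. The desired isomorphism then follows from the already-established compatibilities of $\RIhom$ with extension of scalars for $\R$-constructible first argument (\cite[Lemma~2.7]{BHHS22}) and of $\alpha$ and proper pushforward with extension of scalars (\cite[Lemmas~3.3.7 and 3.3.12]{DK16}, noting that $\widehat{X}\times\overline{\R}\to\widehat{X}$ is proper). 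Thus the only constructibility input is that $\cF$ is an honest $\R$-constructible sheaf on $\cX\times\R_\infty$, which is precisely the hypothesis of the cited $\RIhom$ lemma; no further d\'evissage is needed.
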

\begin{proof}
	There exists a canonical morphism (from right to left), and it suffices to prove that it is an isomorphism locally on an open cover of $X$ by relatively compact subanalytic subsets. In other words, it suffices to prove the isomorphism under the assumption that $F\iso K^\EE_\cX\conv \cF$, $G\iso K^\EE_\cX\conv \cG$ for some $\cF,\cG\in\DbRcK{\cX\times\R_\infty}$. Let us, without loss of generality, choose them such that $K_{\{t\geq 0\}}\conv \cF\iso \cF$ (and similarly for $\cG$).
	Then
	\begin{align*}
		\RHomE_{L_\cX}&(\piLK{\cX}F,\piLK{\cX}G)\\ &\iso
		\RHomE_{L_\cX}\big(\piLK{\cX}\cF,L^\EE_\cX\conv(\piLK{\cX}\cG)\big)\\
		&\iso \alpha_{\cX}\RR {\pi_{\cX}}_*\RIhom_{L_\cX}\big(\piLK{\cX}\cF,\piLK{\cX}(K^\EE_\cX\conv\cG)\big)\\
		&\iso L_{\cX}\otimes_{K_{\cX}} \alpha_{\cX}\RR {\pi_{\cX}}_*\RIhom_{K_\cX}(\cF,K^\EE_\cX\conv\cG)\\
		&\iso L_{\cX}\otimes_{K_{\cX}}\RHomE_{K_\cX}(F,G).
	\end{align*}
	Here, the first isomorphism follows from \cite[Lemma~4.5.15]{DK16}, and the second isomorphism uses the definition of $\RHomE$. The third isomorphism follows from \cite[Lemma 2.7]{BHHS22} and \cite[Lemmas 3.3.12 and 3.3.7]{DK16} (note that the extended map $\overline{\pi}\colon \widehat{X}\times\overline{\R}\to\widehat{X}$ is proper).
\end{proof}

\begin{rem}
	In the above proof, we have used \cite[Lemma~2.7]{BHHS22}, i.e.\ the compatibility of $\RIhom$ with extension of scalars. The proof of this lemma in loc.~cit.\ relied on a technical lemma whose details were not expanded there. Let us note that an alternative proof of this lemma can be performed by using the results of \cite{HS23}.
\end{rem}

\begin{cor}\label{cor:morphExt}
	Let $L/K$ be a field extension. Let $\cX$ be a real analytic bordered space and let $F,G\in\EbRcIK{\cX}$.
	If $L/K$ is finite or $X\subset \widehat{X}$ is relatively compact, then the natural morphism
	$$L\otimes_K \sHom_{\EbRcIK{\cX}}(F, G)\to\sHom_{\EbRcIL{\cX}}(\piLK{\cX} F, \piLK{\cX} G)$$
	is an isomorphism.
\end{cor}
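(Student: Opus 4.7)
The plan is to deduce the claim from Proposition~\ref{prop:HomEext} by passing from sheaf-valued $\RHomE$ to module-valued $\sHom$ via global sections. The starting observation is the canonical identification, valid for any $A, B \in \EbIk{\cX}$,
$$\sHom_{\EbIk{\cX}}(A, B) \iso H^0 R\Gamma\bigl(X; \RHomE_{k_\cX}(A, B)\bigr).$$
Applying $H^0 R\Gamma(X; -)$ to the isomorphism in Proposition~\ref{prop:HomEext} produces
$$\sHom_{\EbRcIL{\cX}}(\piLK{\cX} F, \piLK{\cX} G) \iso H^0 R\Gamma\bigl(X; L_X \otimes_{K_X} \mathcal{F}\bigr),$$
where $\mathcal{F} \defeq \RHomE_{K_\cX}(F, G)$. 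Since $L/K$ is a field extension, $L$ is flat over $K$ and $L \otimes_K -$ commutes with $H^0$, so the proof reduces to establishing
$$R\Gamma\bigl(X; L_X \otimes_{K_X} \mathcal{F}\bigr) \iso L \otimes_K R\Gamma(X; \mathcal{F}). \quad (\star)$$

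When $[L : K]$ is finite, $(\star)$ is straightforward: a choice of $K$-basis of $L$ gives $L_X \iso K_X^{\oplus [L:K]}$, so $L_X \otimes_{K_X} \mathcal{F} \iso \mathcal{F}^{\oplus [L:K]}$, and $R\Gamma$ commutes with finite direct sums, the $L$-equivariance being built into the chosen basis. This handles the first alternative of the hypothesis.

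The remaining case, where $X \subset \widehat{X}$ is relatively compact, is where the main obstacle lies. I would write $L = \varinjlim_i L_i$ as a filtered colimit of its finite-dimensional $K$-subspaces, so that $L_X \otimes_{K_X} \mathcal{F} \iso \varinjlim_i (L_i)_X \otimes_{K_X} \mathcal{F}$, and the task becomes commuting $R\Gamma(X; -)$ past this filtered colimit. The crucial input is the $\R$-constructibility of $\mathcal{F}$, inherited from $F$ and $G$, which lets one view $\mathcal{F}$ as a complex of subanalytic sheaves on $\widehat{X}$. Relative compactness of $X$ in $\widehat{X}$ means precisely that $X$ is an object of the subanalytic site $\widehat{X}^c_{\sa}$, and on subanalytic sheaves the sections functor at such an object commutes with filtered inductive limits (this is the principle underlying Lemma~\ref{lemma:latticeSectionsSuban}). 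Combining this commutation with the finite case applied to each $L_i$ yields
\begin{align*}
R\Gamma(X; L_X \otimes_{K_X} \mathcal{F}) &\iso \varinjlim_i R\Gamma\bigl(X; (L_i)_X \otimes_{K_X} \mathcal{F}\bigr) \\
&\iso \varinjlim_i L_i \otimes_K R\Gamma(X; \mathcal{F}) \iso L \otimes_K R\Gamma(X; \mathcal{F}),
\end{align*}
which establishes $(\star)$ and completes the argument.
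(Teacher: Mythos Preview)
Your overall strategy matches the paper's: reduce to the commutation of global sections with extension of scalars. The finite case is handled the same way in both. The difference lies in the relatively compact case, and here your argument has a genuine gap.

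You assert that $\mathcal{F}=\RHomE_{K_\cX}(F,G)$ is $\R$-constructible, ``inherited from $F$ and $G$'', and then use this to pass to the subanalytic site. This constructibility claim is not justified, and it is not a standard fact from \cite{DK16} or \cite{DK19}; the functor $\RHomE$ involves a (non-proper) pushforward along $\pi_\cX$ applied to an $\Ihom$ into an object of the form $K^\EE_\cX\conv\cG$, which is genuinely an ind-object, so constructibility of the output is far from automatic. Even granting it, your filtered-colimit step is delicate: the colimit $\varinjlim_i (L_i)_X\otimes_{K_X}\mathcal{F}$ is taken in ordinary sheaves on $X$, and you would need to explain why its derived sections over $X$ agree with the colimit of sections computed in the subanalytic category of $\widehat{X}$; the commutation principle you invoke (underlying Lemma~\ref{lemma:latticeSectionsSuban}) is a statement about subanalytic sheaves, not about ordinary sheaves that happen to be constructible.

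The paper sidesteps all of this. It first replaces $\sHom$ over $\cX$ by $\sHom$ over $\widehat{X}$ via $\EE j_{\cX!!}$, so that the relevant sheaf is $\RHomE_{K_{\widehat{X}}}(\EE j_{\cX!!}F,\EE j_{\cX!!}G)$ on $\widehat{X}$. Since $\EE j_{\cX!!}F$ and $\EE j_{\cX!!}G$ are supported on $\overline{X}$, which is compact by hypothesis, this sheaf has compact support. Hence $\RR\Gamma(\widehat{X};-)$ on it coincides with a proper direct image to a point, and proper direct images commute with tensor products. No constructibility of $\RHomE$ is needed. If you want to repair your argument along its current lines, the simplest fix is to imitate this: extend $\mathcal{F}$ by zero to $\widehat{X}$ (i.e., work with $\EE j_{\cX!!}$ from the start) and invoke compact support rather than constructibility.
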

\begin{proof}
	It follows from \cite[(2.6.3)]{DK19} (cf.\ also \cite[Proposition~3.2.9 and Corollary~3.2.10]{DK16}) that
	\begin{align*}
	\sHom_{\EbRcIK{\cX}}(F, G) &\iso \sHom_{\EbRcIK{\widehat{X}}}(\EE {j_{\cX}}_{!!}F, \EE {j_{\cX}}_{!!}G)\\
	&\iso H^0\RR\Gamma\big(\widehat{X};\RHomE_{K_{\widehat{X}}}(\EE {j_{\cX}}_{!!}F, \EE {j_{\cX}}_{!!}G)\big),
	\end{align*}
	where $j_{\cX}\colon \cX\to\widehat{X}$ is the natural morphism of bordered spaces,	and analogously over $L$.
	By Lemma~\ref{lemma:compIm} and Proposition~\ref{prop:HomEext}, we know that extension of scalars commutes with $\EE {j_{\cX}}_{!!}$ and $\RHomE$. It therefore remains to show that it commutes with taking global sections. In the case of a finite field extension, taking global sections commutes with extension of scalars (since finite direct sums are finite direct products and direct images are right adjoints). In the case where $X\subset \widehat{X}$ is relatively compact, the sheaf we take sections of has compact support, so global sections are a proper direct image and hence commute with extension of scalars.
\end{proof}

\begin{lemma}\label{lemma:extConservative}
	Let $\cX$ be a real analytic bordered space and $L/K$ a field extension. Let $H\in\EbRcIK{\cX}$ such that $\piLK{\cX}H\iso 0$. Then $H\iso 0$. In particular, extension of scalars is a conservative functor on $\EbRcIK{\cX}$.
\end{lemma}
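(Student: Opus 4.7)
The plan is to leverage the sheaf-level isomorphism of Proposition~\ref{prop:HomEext} together with the faithful flatness of the field extension $L/K$. Any field extension is faithfully flat (since $L$ is a free $K$-module), and this is the key ingredient that lets us bypass the restrictions present in Corollary~\ref{cor:morphExt}.

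My first step is to reduce to a genuine space. Applying $\EE {j_\cX}_{!!}$ to $H$ and using Lemma~\ref{lemma:compIm}, extension of scalars commutes with $\EE {j_\cX}_{!!}$, so $\piLK{\widehat{X}}(\EE {j_\cX}_{!!}H) \iso \EE {j_\cX}_{!!}(\piLK{\cX}H) \iso 0$ in $\EbRcIL{\widehat{X}}$. This is convenient because on $\widehat{X}$ the relationship between $\sHom$ and $\RHomE$ is directly accessible via the formula recalled in the proof of Corollary~\ref{cor:morphExt}.

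Next, I would apply Proposition~\ref{prop:HomEext} on $\widehat{X}$ with $F = G = \EE {j_\cX}_{!!}H$. Since the left-hand side vanishes, I get
$$L_{\widehat{X}} \otimes_{K_{\widehat{X}}} \RHomE_{K_{\widehat{X}}}(\EE {j_\cX}_{!!}H, \EE {j_\cX}_{!!}H) \iso \RHomE_{L_{\widehat{X}}}(0,0) \iso 0.$$
Stalkwise, $L$ is faithfully flat over $K$, so the tensor functor $L_{\widehat{X}} \otimes_{K_{\widehat{X}}}(-)$ detects zero in $\Dbk{\widehat{X}}$. Hence $\RHomE_{K_{\widehat{X}}}(\EE {j_\cX}_{!!}H, \EE {j_\cX}_{!!}H) \iso 0$. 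Taking $H^0\RR\Gamma(\widehat{X}; -)$ and using
$$\sHom_{\EbRcIK{\cX}}(H,H) \iso \sHom_{\EbRcIK{\widehat{X}}}(\EE {j_\cX}_{!!}H, \EE {j_\cX}_{!!}H) \iso H^0 \RR\Gamma(\widehat{X}; \RHomE_{K_{\widehat{X}}}(\EE {j_\cX}_{!!}H, \EE {j_\cX}_{!!}H))$$
from the proof of Corollary~\ref{cor:morphExt}, I conclude $\id_H = 0$, hence $H \iso 0$. The final assertion (conservativity) then follows in the standard way, by applying the first statement to the cone of any morphism in $\EbRcIK{\cX}$ whose extension of scalars is an isomorphism.

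I do not anticipate a serious obstacle: once Proposition~\ref{prop:HomEext} is in hand the argument essentially reduces to faithful flatness of $L$ as a $K$-module. The only point demanding a little care is the passage from $\cX$ to $\widehat{X}$ in order to reach the $\sHom$--$\RHomE$ comparison, and this is handled cleanly by Lemma~\ref{lemma:compIm} together with the full faithfulness of $\EE {j_\cX}_{!!}$ implicit in the Hom-formula above.
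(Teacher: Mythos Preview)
Your argument is correct and in fact somewhat cleaner than the paper's. The paper does not exploit Proposition~\ref{prop:HomEext} directly; instead it first uses Corollary~\ref{cor:morphExt} (which requires $X\subset\widehat{X}$ relatively compact) to show that on each relatively compact piece $U_\infty$ a morphism becomes an isomorphism after extension of scalars only if it already was one, and then invokes the stack property of $U\mapsto\EzRcIK{U_\infty}$ (via \cite[Proposition~3.8]{DKsh}) to glue the local vanishing to a global one. Your route bypasses both the relative-compactness hypothesis and the gluing step by working with the \emph{sheaf} $\RHomE$ rather than with global $\sHom$: since Proposition~\ref{prop:HomEext} holds without any compactness assumption and faithful flatness of $L/K$ detects zero stalkwise in $\DbK{\widehat{X}}$, you get the vanishing of $\RHomE$ in one stroke and only take global sections at the very end. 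The trade-off is that the paper's argument extracts slightly more along the way (an explicit inverse over $K$ from one over $L$), which is closer in spirit to what is reused in Lemma~\ref{lemma:morphGalExt}; your argument is more economical for the statement at hand.
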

\begin{proof}
	In the situation of Corollary~\ref{cor:morphExt}, if $X\subset \widehat{X}$ is relatively compact, then for a morphism $f\colon F\to G$, we have that $1\otimes f$ is an isomorphism if and only if $f$ is so: Let $1\otimes g_0+\sum_{i=1}^n \ell_i\otimes g_i$ be an inverse, where $1,\ell_1,\ldots,\ell_n$ are elements of $L$, linearly independent over $K$, and $g_i\colon G\to F$ are morphisms. Then $$1\otimes\id_F=\big(1\otimes g_0+\sum_{i=1}^{n}\ell_i\otimes g_i\big)\circ(1\otimes f) = 1\otimes (g_0\circ f)+\sum_{i=1}^{n} \ell_i\otimes (g_i\circ f)$$ and hence $g_0\circ f=\id_F$, and analogously $f\circ g_0=\id_G$.
	
	Now let $f\colon F\to 0$ be the unique morphism. Then we can cover $X$ by subsets $U$ that are relatively compact in $\widehat{X}$ and on each $U_\infty=(U,\overline{U})$ we get that $F|_{U_\infty}\iso 0$. (Here, the closure is taken in $\widehat{X}$.) By \cite[Proposition~3.8]{DKsh}, this suffices to conclude $F\iso 0$ globally.
\end{proof}

Let us also state the following lemma, whose proof is completely analogous to \cite[Proposition 4.9]{Ho23GalShv}.
\begin{lemma}\label{lemma:morphGalExt}
	Let $L/K$ be a finite Galois extension with Galois group $G$. Let $\cX$ be a real analytic bordered space and let $F,G\in\EbRcIK{\cX}$. Then the subspace of
	$$\sHom_{\EbRcIL{\cX}}(\piLK{\cX} F, \piLK{\cX} G)\iso L\otimes_K \sHom_{\EbRcIK{\cX}}(F, G)$$
	consisting of morphisms $f$ that fit, for every $g\in G$, into a commutative diagram
	$$\begin{tikzcd}
		\piLK{\cX} F \arrow{r}{f}\arrow{d}{g\otimes\id_F}& \piLK{\cX} G\arrow{d}{g\otimes\id_G}\\
		\pi^{-1}\overline{L}^g_\cX \otimes_{\pi^{-1}K_\cX} F\arrow{r}{\overline{f}^g} &\pi^{-1}\overline{L}^g_\cX \otimes_{\pi^{-1}K_\cX} G
	\end{tikzcd}$$
	is exactly the $K$-vector space $1\otimes \sHom_{\EbRcIK{X}}(F, G)$.
\end{lemma}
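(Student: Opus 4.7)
The plan is to reduce the claim to a standard fact of Galois descent for $K$-vector spaces. First I would invoke Corollary~\ref{cor:morphExt} to identify the $L$-vector space $\sHom_{\EbRcIL{\cX}}(\piLK{\cX}F, \piLK{\cX}G)$ with $L\otimes_K \sHom_{\EbRcIK{\cX}}(F,G)$, writing a generic $L$-morphism as $f=\sum_i \ell_i\otimes \phi_i$ with $\ell_i\in L$ and $\phi_i\in \sHom_{\EbRcIK{\cX}}(F,G)$, realized concretely as the composition of the scalar extension $\piLK{\cX}\phi_i$ with multiplication by $\ell_i$ on the target.

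Next I would unwind the commutative square. The vertical arrows $g\otimes\id_F$ and $g\otimes\id_G$ are the evident abelian-group isomorphisms acting as $g$ on the $L$-factor; they coincide with the identity on underlying sheaves and only reinterpret the scalar action. By the very definition of the conjugation functor $\overline{(\bullet)}^g$ recalled in Section~\ref{subsec:GalConj}, the morphism $\overline{f}^g$ coincides with $f$ as a morphism of the underlying abelian-group enhanced ind-sheaves. Consequently the square commutes if and only if $f = (g\otimes\id_G)\circ f\circ (g\otimes\id_F)^{-1}$ as $L$-linear morphisms $\piLK{\cX}F\to\piLK{\cX}G$. Evaluating on pure tensors $\ell\otimes x$ shows that, in the identification above, the right-hand side corresponds to $\sum_i g(\ell_i)\otimes \phi_i$. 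Therefore the commutativity condition for every $g\in G$ is precisely the assertion that the element $\sum_i \ell_i\otimes \phi_i$ is fixed by the natural $G$-action on $L\otimes_K \sHom_{\EbRcIK{\cX}}(F,G)$ (acting on the first factor).

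The final step is then the classical Galois descent for vector spaces: for any $K$-vector space $V$ one has $(L\otimes_K V)^G = 1\otimes V$, a consequence of the normal basis theorem together with a dimension count (or, equivalently, of the faithful flatness of $L/K$ combined with the equality $L^G=K$). Applying this to $V=\sHom_{\EbRcIK{\cX}}(F,G)$ identifies the subspace described in the lemma with $1\otimes \sHom_{\EbRcIK{\cX}}(F,G)$, which is the desired conclusion.

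The main obstacle is the bookkeeping in the second step, namely correctly translating the diagrammatic commutativity into the tensor-factor action of $g$. This requires being careful about the distinction between $\piLK{\cX}H$ and its $g$-twist $\pi^{-1}\overline{L}^g_\cX\otimes_{\pi^{-1}K_\cX}H$, which differ only in their $L$-module structure while sharing the same underlying enhanced ind-sheaf; once this identification is made transparent, the proof reduces to a routine piece of Galois-theoretic linear algebra.
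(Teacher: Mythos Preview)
Your proposal is correct and is precisely the standard Galois descent argument one would expect here. The paper does not actually write out a proof of this lemma; it only states that the proof is ``completely analogous to \cite[Proposition~4.9]{Ho23GalShv}'', and your argument---identifying the Hom space via Corollary~\ref{cor:morphExt}, translating the diagram into $G$-invariance on $L\otimes_K V$, and invoking $(L\otimes_K V)^G = 1\otimes V$---is exactly the expected content of that analogy.
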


We can now give a slightly more general version (without the compactness assumption) of \cite[Proposition 2.15]{BHHS22}. Recall that if $L/K$ is a finite Galois extension with Galois group $G$ and $H\in \EbIL{\cX}$, then for any $g\in G$ one has the $g$-conjugate $\overline{H}^g\in\EbIL{\cX}$. A $G$-structure on $H$ is a collection $(\varphi_g)_{g\in G}$ of isomorphisms $\varphi_g\colon H\overset{\sim}{\to} \overline{H}^g$ such that for any $g,h\in G$ one has $\overline{\varphi_g}^h\circ\varphi_h=\varphi_{gh}$ (see \cite[Definition~2.12]{BHHS22} for this notion). Note that for $H_K\in\EbIK{\cX}$, the action of $G=\mathrm{Aut}(L/K)$ on $L$ induces a natural $G$-structure on $\piLK{\cX}H_K$.
\begin{prop}\label{prop:descRc0}
	Let $L/K$ be a finite Galois extension with Galois group $G$. Let $\cX$ be a real analytic bordered space and let $H\in\EzRcIK{\cX}$ be equipped with a $G$-structure. Then there exists $H_K\in\EzRcIK{\cX}$ and an isomorphism $$H\iso \piLK{\cX}H_K$$ through which the given $G$-structure on $H$ coincides with the natural one on\linebreak $\piLK{\cX}H_K$.
\end{prop}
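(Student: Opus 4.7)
The strategy is to reduce to the compact case treated in \cite[Proposition~2.15]{BHHS22} by a local-to-global gluing argument. The key observation enabling this is that, since $L/K$ is assumed finite, Corollary~\ref{cor:morphExt} and in consequence Lemma~\ref{lemma:morphGalExt} hold without any compactness assumption on $\cX$, so morphisms descend as soon as their $L$-extensions do. Concretely, I would cover $X$ by a locally finite family $\{U_i\}_{i\in I}$ of open subsets of $X$ that are relatively compact subanalytic in $\widehat{X}$, and regard each $U_i$ as a bordered subspace $\mathcal{U}_i$ of $\cX$. Since $U_i\subset\widehat{X}$ is relatively compact, \cite[Proposition~2.15]{BHHS22} applied to the restriction $H|_{\mathcal{U}_i}$ (equipped with its induced $G$-structure) yields a local descent $H_{K,i}\in\EzRcIK{\mathcal{U}_i}$ together with a $G$-equivariant isomorphism $\alpha_i\colon H|_{\mathcal{U}_i}\iso\piLK{\mathcal{U}_i}H_{K,i}$.

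On double intersections $\mathcal{U}_{ij}=\mathcal{U}_i\cap\mathcal{U}_j$, the composition $\alpha_j\circ\alpha_i^{-1}$ is a $G$-equivariant isomorphism between $\piLK{\mathcal{U}_{ij}}(H_{K,i}|_{\mathcal{U}_{ij}})$ and $\piLK{\mathcal{U}_{ij}}(H_{K,j}|_{\mathcal{U}_{ij}})$, and by Lemma~\ref{lemma:morphGalExt} it descends uniquely to an isomorphism $\phi_{ij}\colon H_{K,i}|_{\mathcal{U}_{ij}}\iso H_{K,j}|_{\mathcal{U}_{ij}}$ over $K$. The cocycle identity $\phi_{jk}\circ\phi_{ij}=\phi_{ik}$ on triple intersections then follows from the injectivity of extension of scalars on $\sHom$-groups (Corollary~\ref{cor:morphExt}), since both sides have the same $L$-extension, namely $\alpha_k\circ\alpha_i^{-1}$.

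Finally, I would glue the $H_{K,i}$ along the cocycle $(\phi_{ij})$ to a global $H_K\in\EzRcIK{\cX}$, and observe that the $\alpha_i$ glue (by the same uniqueness argument) to a $G$-equivariant isomorphism $H\iso\piLK{\cX}H_K$ intertwining the prescribed and the canonical $G$-structures. The step I expect to demand the most care is the gluing itself: one needs a descent property for $\EzRcIK{\cX}$ along the cover $\{U_i\}$, which should be inherited from the corresponding property for (subanalytic) ind-sheaves on $\widehat{X}\times\overline{\R}$ combined with the local nature of $\R$-constructibility and of being of degree zero in the natural t-structure, possibly invoking the point of view on $\EzRcIk{\cX}$ via subanalytic sheaves mentioned in Remark~\ref{rem:EfSuban}. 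Modulo this technical verification, the argument is otherwise purely formal.
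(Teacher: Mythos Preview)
Your proposal is correct and follows essentially the same approach as the paper: cover $X$ by relatively compact subanalytic opens, apply \cite[Proposition~2.15]{BHHS22} on each piece, descend the transition isomorphisms via Lemma~\ref{lemma:morphGalExt}, and glue. The paper handles the gluing step you flagged as delicate by simply invoking that $U\mapsto \EzRcIK{U_\infty}$ is a stack (with $U_\infty=(U,\overline{U})$, closure in $\widehat{X}$), so your caution there is well placed but the resolution is standard.
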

\begin{proof}
	In \cite[Proposition 2.15]{BHHS22}, we have proved the result in the case that $\cX=(X,\widehat{X})$ is such that $X$ is relatively compact in $\widehat{X}$. (The part of the statement about the $G$-structures has not explicitly been mentioned in loc.~cit.\ but is clear from the construction.)
	
	Now, if $X\subseteq \widehat{X}$ is not relatively compact, we can cover $X$ by open subsets $U_i$, $i\in I$, all of which are relatively compact in $\widehat{X}$. On each $(U_i)_\infty=(U_i,\overline{U_i})$, we can perform the above construction (closures are taken in $\widehat{X}$ here) and obtain $H_{K,i}\in\EzRcIK{(U_i)_\infty}$ with $H|_{(U_i)_\infty}\iso \piLK{(U_i)_\infty} H_{K,i}$. Further, on any overlap $(U_{ij})_\infty\vcentcolon=(U_i\cap U_j,\overline{U_i\cap U_j})$ we have an isomorphism
	$$\piLK{(U_{ij})_\infty} H_{K,i}|_{(U_{ij})_\infty}\iso\piLK{(U_{ij})_\infty} H_{K,j}|_{(U_{ij})_\infty}$$
	induced by the identity on $H$ (which is certainly compatible with the given $G$-structure). Since the natural $G$-structures on both sides correspond to the given one on $H$, we see that this morphism is compatible with the natural $G$-structures on both sides. Hence, by Lemma~\ref{lemma:morphGalExt}, it descends to an isomorphism $H_{K,i}|_{(U_{ij})_\infty}\iso H_{K,j}|_{(U_{ij})_\infty}$. As $U\mapsto \EzRcIK{U_\infty}$ is a stack, this yields an object $H_K\in \EzRcIK{\cX}$ as desired.
\end{proof}

With these results in hand, we can now state Galois descent for $\R$-constructible enhanced ind-sheaves concentrated in one degree as an equivalence of categories.

We denote by $\EzRcIL{\cX}^G$ the category of pairs $(H,(\varphi_g)_{g\in G})$ of objects $H\in\EzRcIL{\cX}$ together with a $G$-structure. A morphism $(H,(\varphi_g)_{g\in G})\to (H',(\varphi'_g)_{g\in G})$ is a morphism $f\colon H\to H'$ such that for any $g\in G$ the following diagram commutes:
$$\begin{tikzcd}
	H\arrow{r}{f}\arrow{d}{\varphi_g} & H'\arrow{d}{\varphi'_g}\\
	\overline{H}^g\arrow{r}{\overline{f}^g} &\overline{H'}^g
\end{tikzcd}$$
\begin{thm}\label{thm:GaloisDescentEb}
	Let $L/K$ be a finite Galois extension. Let $\cX$ be a real analytic bordered space. Then extension of scalars induces an equivalence of categories
	$$\EzRcIK{\cX}\overset{\sim}{\To}\EzRcIL{\cX}^G.$$
\end{thm}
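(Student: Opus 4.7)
The plan is to assemble the equivalence from two essentially separate inputs already provided by the preceding results. First, I would check that the functor $\piLK{\cX}(-)$, equipped with the natural $G$-structure $(g\otimes \id)_{g\in G}$, is well-defined from $\EzRcIK{\cX}$ into $\EzRcIL{\cX}^G$. Tensoring with the flat module $L_{\cX}$ is exact, so concentration in degree zero is preserved; it clearly preserves $\R$-constructibility; and any morphism $f\colon H_K\to H'_K$ is sent to $1\otimes f$, which is tautologically compatible with the natural $G$-structures on both sides. It then remains to establish essential surjectivity and full faithfulness.

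Essential surjectivity is a direct application of Proposition~\ref{prop:descRc0}: given $(H,(\varphi_g)_{g\in G})\in\EzRcIL{\cX}^G$, there exists $H_K\in\EzRcIK{\cX}$ together with an isomorphism $H\iso \piLK{\cX}H_K$ identifying $(\varphi_g)_{g\in G}$ with the natural $G$-structure, which is precisely an isomorphism in $\EzRcIL{\cX}^G$.

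For full faithfulness, let $H_K, H'_K\in \EzRcIK{\cX}$. Morphisms between their scalar extensions in $\EzRcIL{\cX}^G$ are by definition morphisms in $\EbIL{\cX}$ between the underlying objects commuting with the natural $G$-structures. Since $L/K$ is finite, Corollary~\ref{cor:morphExt} yields
$$\sHom_{\EbRcIL{\cX}}(\piLK{\cX}H_K,\piLK{\cX}H'_K)\iso L\otimes_K \sHom_{\EzRcIK{\cX}}(H_K,H'_K),$$
and Lemma~\ref{lemma:morphGalExt} identifies the $G$-equivariant subspace of the right-hand side with $1\otimes \sHom_{\EzRcIK{\cX}}(H_K,H'_K)$. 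This is exactly the image of $\sHom_{\EzRcIK{\cX}}(H_K,H'_K)$ under extension of scalars, which gives the desired bijection.

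Since the substantive work, namely the local-to-global descent in Proposition~\ref{prop:descRc0} (based on the stack property of $U\mapsto \EzRcIK{U_\infty}$), the compatibility of internal hom with scalar extension in Proposition~\ref{prop:HomEext}, and the Galois-fixed-point calculation of Lemma~\ref{lemma:morphGalExt}, has already been carried out, no real obstacle remains: the proof is a direct assembly. If anything requires slight care, it is only verifying that the notion of $G$-equivariant morphism used in Lemma~\ref{lemma:morphGalExt} matches the definition of morphisms in $\EzRcIL{\cX}^G$ adopted just before the theorem statement, which is immediate from unwinding the two definitions.
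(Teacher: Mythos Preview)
Your proof is correct and follows essentially the same route as the paper: essential surjectivity is deduced from Proposition~\ref{prop:descRc0} and full faithfulness from Lemma~\ref{lemma:morphGalExt} (which already incorporates the isomorphism of Corollary~\ref{cor:morphExt}). The additional checks you include (well-definedness of the functor, matching the two notions of $G$-equivariant morphism) are harmless elaborations of what the paper leaves implicit.
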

\begin{proof}
	Full faithfulness follows from Lemma~\ref{lemma:morphGalExt} and essential surjectivity follows from Proposition~\ref{prop:descRc0}.
\end{proof}

\section{$K$-structures and monodromy data}\label{sec:Monodromy}

In this section, we are going to study the impact of a $K$-structure on the generalized monodromy data (in particular Stokes matrices) associated to an enhanced ind-sheaf or a meromorphic connection on a complex curve. The main descent result is Theorem~\ref{thm:HLTdescent}, and we draw some explicit consequences in the following subsections. Many other parts, in particular the description of generalized monodromy data and meromorphic connections, are mostly a reproduction of well-known facts in order to clarify our setting and language.

\begin{defi}
	Let $\cX$ be a bordered space. If $K\subset L$ is a subfield and $H\in\EbIL{\cX}$, then a $K$-lattice of $H$ is an enhanced ind-sheaf $H_K\in\EbIK{\cX}$ such that $H\iso \pi^{-1}L_X\otimes_{K_X} H_K$. If such an $H_K$ exists, we say that $H$ has a $K$-structure.
\end{defi}

\begin{ex}
	It is obvious that objects of the form $\E^f_{W|X}$ have a $K$-structure for any subfield $K\subset \C$ (recall these exponential objects from Section~\ref{sec:enhanced}). Similarly, let us consider the following situation: Let us consider smooth algebraic varieties $X$ and $S$ and morphisms
	$$\begin{tikzcd}
		X\arrow{d}{\varphi} \arrow{r}[swap]{g}& S\\
		\A^1
	\end{tikzcd}$$
	and the (algebraic) $\cD_X$-module $\cE^\varphi$ on $X$ as well as its direct image $g_+\cE^f$ on $S$. We can consider these modules (still denoting them by the same symbols) as analytic D-modules on a smooth completion $\widehat{X}$ of $X$ and $\widehat{S}$ of $S$, respectively, by taking the analytifications of the meromorphic extensions, and we can also extend $g$ as a map $\hat{g}\colon \widehat{X}\to\widehat{S}$ between these completions. Then, the enhanced ind-sheaf $\SolE{\widehat{S}}(g_+\cE^f)\iso \EE \hat{g}_{!!}\SolE{\widehat{X}}(\cE^f)$ has a $K$-structure over any field $K\subset \C$.

	Using Galois descent, we have also deduced certain $K$-structures of the enhanced solutions of hypergeometric systems in \cite{BHHS22}.
\end{ex}

In the rest of this section, $X$ will be a Riemann surface (a connected one-di\-men\-sion\-al complex manifold) and $D\subset X$ a discrete set of points.
We write $X^*\vcentcolon= X\setminus D$.
Given charts around every point $p\in D$ and $\beps = (\varepsilon_p)_{p\in D}$, where $0\leq\varepsilon_p\ll 1$ (we just write $0\leq\beps\ll 1$, and similarly $0<\beps\ll 1$), we set $X^*_{\beps}\vcentcolon= X\setminus\bigcup_{p\in D} \overline{B}_{\varepsilon_p}(p)$, where $\overline{B}_{\varepsilon_p}(p)$ is the closed ball around $p$ of radius $\varepsilon_p$ in the given chart around $p$. (Here, one should read ``$\beps\ll 1$'' as ``the $\varepsilon_p$ are sufficiently small'', in the sense that $\overline{B}_{\varepsilon_p}(p)$ does not have a limit point in the boundary of the chart around $p$ and that $\overline{B}_{\varepsilon_p}(p)\cap \overline{B}_{\varepsilon_q}(q)=\emptyset$ for $p,q\in D$ with $p\neq q$. This ensures that $X^*_{\beps}$ is an open subset of $X$ and that it is homotopy equivalent to $X^*$.)

Let $\k$ be a field. Recall the notation for enhanced ind-sheaves from Section~\ref{sec:enhanced}, and in particular the natural embedding $\e{\k}{X}$ from sheaves to ind-sheaves as well as the notation for enhanced exponentials. We will simply write $\E^{\real\varphi}_{W,\k}$ or even $\E^{\real\varphi}_{W}$ (if the field $\k$ is clear from the context) instead of $\E^{\real\varphi}_{W|X,\k}$ to simplify notation.

By a \emph{sector} with vertex at $p\in X$, we will mean a subset of $X$ of the following form: Given a local coordinate $z$ in a neighbourhood $U$ of $p$ with $z(p)=0$, an angle $\theta\in\R/2\Z$ and $r,\delta\in\R_{>0}$, we set
$$S_\theta(r,\delta)\vcentcolon= \{z\in U\mid 0< |z|<r, \theta-\delta<\arg z<\theta+\delta\}.$$

\subsection{Descent of meromorphic normal form}

Let $X$ be a Riemann surface. We first investigate objects motivated by the topological counterpart of meromorphic connections on $X$ via the Riemann--Hilbert correspondence of \cite{DK16}. This type of objects is described in the following definition, which is strongly motivated by \cite{MocCurveTest2} and \cite[§6.2]{DKmicrolocal}. The goal of this subsection is to show that any lattice inherits this property.

\begin{defi}\label{def:HLTtype}
	Let $H\in \EbIk{X}$. We say that $H$ is of \emph{meromorphic normal form} with respect to $D$ if the following conditions are satisfied:
	\begin{itemize}
		\item[(a)] $\pi^{-1}\k_{X^*}\otimes H \iso H$.
		\item[(b)] There exists a local system $\cL$ on $X^*$ (consider it extended by zero to $X$, i.e.\ $\cL\in\Mod{\k_X}$) such that for any $0<\beps\ll 1$, we have $$\pi^{-1}\k_{X^*_{\beps}}\otimes H \iso \e{\k}{X}(\cL_{X^*_{\beps}}).$$
		\item[(c)] For any $p\in D$, any local coordinate $z$ in a neighbourhood $U$ of $p$ and any direction $\theta\in\R/2\pi\Z$, there exist $r,\delta\in \R_{>0}$, $n\in \Z_{>0}$, a determination of $z^{1/n}$ on the sector $S_\theta\vcentcolon=S_\theta(r,\delta)$, a finite set $\Phi_\theta\subset z^{-1/n}\C[z^{-1/n}]$ and an integer $r_\varphi\in \Z_{>0}$ for any $\varphi \in \Phi_\theta$ such that (note that the $\varphi\in \Phi_\theta$ define holomorphic functions on $S_\theta$) one has an isomorphism
		$$\pi^{-1}\k_{S_\theta}\otimes H \iso \bigoplus_{\varphi\in \Phi_\theta}(\E^{\real \varphi}_{S_\theta})^{r_\varphi}.$$
	\end{itemize}
\end{defi}

In simple terms, an object of meromorphic normal form is one that is localized at $D$, looks like a local system away from the singularities and satisfies a ``Stokes phenomenon'' close to the singularity, i.e.\ it decomposes on small sectors as a direct sum of exponentials.

It is not difficult to see that if $H\in \EbIk{X}$ satisfies the conditions (a), (b) and (c), then it already follows that it is concentrated in degree $0$ and $\R$-constructible (cf.\ \cite[Lemma~4.9.9]{DK16}), i.e.\ $H\in\EzRcIk{X}$.

\begin{rem}\label{rem:finSectors}
	Let us make some remarks that will be useful later. First of all, this definition is really just a global reformulation of \cite[§6.2]{DKmicrolocal}. The objects defined above can also be described as those enhanced perverse ind-sheaves (see \cite[Definition~3.4]{DK23}) satisfying $\pi^{-1}\k_{X^*}\otimes H\iso H$, and shifted by $-1$. On the other hand, the notion of meromorphic normal form also corresponds to that of ``quasi-normal form'' (in dimension $1$ and for $\k=\C$) used in \cite{Ito}. If $\k=\C$, objects of meromorphic normal form are the essential image of meromorphic connections on $X$ with poles at $D$ under the enhanced solution functor $\SolE{X}$ (cf.\ Proposition~\ref{prop:MeroConnNormalForm} below).
	
	To make it more similar to those definitions, our condition (b) of Definition~\ref{def:HLTtype} can be reformulated as follows:
	\begin{itemize}
		\item[(b')] There is an isomorphism $H|_{X^*}\iso \k^\EE_{X^*}\otimes\pi^{-1}\cL$ for a local system $\cL$ on $X^*$.
	\end{itemize}
	This is equivalent to (b) since $\EzIk{X^*}$ is a stack and hence the isomorphisms on all the $X^*_{\beps}$ imply the isomorphism on the whole of $X^*$. (We opted for the formulation (b) in our definition in order to express (b) and (c) in a more similar manner.)
	
	Since the circle of directions $\R/2\pi\Z$ is compact, part (c) of Definition~\ref{def:HLTtype} can equivalently be formulated as follows:
	\begin{itemize}
		\item[(c')] \textit{For any $p\in D$ and a local coordinate $z$ in a neighbourhood $U$ of $p$, there exist a positive integer $n\in\Z_{>0}$ and a finite number of sectors $S^p_1,\ldots,S^p_{k_p}$ (for some $k_p\in\Z_{>0}$) covering a punctured neighbourhood of $p$ such that for any $j\in\{1,\ldots,k_p\}$ one has: a determination of $z^{1/n}$ on the sector $S^p_j$, a finite set $\Phi^p_j\subset z^{-1/n}\C[z^{-1/n}]$ and an integer $r_{\varphi}\in \Z_{>0}$ for any $\varphi\in \Phi^p_j$ such that (note that the $\varphi\in \Phi^p_j$ define holomorphic functions on $S^p_j$) one has an isomorphism}
		\begin{equation}\label{eq:HLTisoOnSectors}
		\pi^{-1}\k_{S^p_j}\otimes H \iso \bigoplus_{\varphi\in \Phi^p_j}(\E^{\real \varphi}_{S^p_j})^{r_{\varphi}}.
		\end{equation}
	\end{itemize}

	We will assume that the sectors $S^p_1,\ldots,S^p_{k_p}$ are ordered in a counter-clockwise sense around $p$ (with respect to their central directions). Then, for any $j\in\{1,\ldots,k_p\}$ we can consider the overlap $S^p_{j,j+1}\vcentcolon=S^p_j\cap S^p_{j+1}$ (where, of course, we count modulo $k_p$, so that $k_p+1\vcentcolon= 1$). On this overlap we have the two isomorphisms induced by the ones from \eqref{eq:HLTisoOnSectors}
	$$ \bigoplus_{\varphi\in \Phi^p_j}(\E^{\real \varphi}_{S^p_{j,j+1}})^{r_\varphi} \iso \pi^{-1}\k_{S^p_{j,j+1}}\otimes H \iso \bigoplus_{\psi\in \Phi^p_{j+1}}(\E^{\real \psi}_{S^p_{j,j+1}})^{r_\psi}.$$
	We can therefore identify the index sets $\Phi^p_j$ and $\Phi^p_{j+1}$ (see \cite[Lemma~3.25]{MocCurveTest2}, cf.\ also \cite[Corollary~5.2.3]{DKmicrolocal}), and also $r_\varphi=r_\psi$ via this identification. More precisely, this identification is given by holomorphic continuation of functions in counter-clockwise direction from $S^p_j$ to $S^p_{j+1}$. We will hence consider all the direct sums in the isomorphisms \eqref{eq:HLTisoOnSectors} to be indexed by the same set $\Phi^p=\Phi^p_1\subset z^{-1/n}\C[z^{-1/n}]$.
	In particular, this also means that $\Phi^p$ is closed under analytic continuation around the circle, i.e.\ if $\varphi(z^{-1/n})\in\Phi^p$, then $\varphi(e^{\frac{2\pi i}{n}}z^{-1/n})\in \Phi^p$.
	
	Denoting by $r$ the rank of the local system $\cL$ from (b), it is also clear that $r=\sum_{\varphi\in\Phi^p} r_\varphi$ for any $p$.
\end{rem}

Our goal is to show that the property of being of meromorphic normal form descends to a lattice. To do this, let us prepare some lemmas. Essentially, the two lemmas below are concerned with the descent of direct sums of exponentials (Lemma~\ref{lemma:latticeE}) and descent of enhanced local systems (Lemma~\ref{lemma:localSystemDescent}) separately, and we will combine them to obtain our desired result (Theorem~\ref{thm:HLTdescent}).

\begin{rem}\label{rem:f<g}
	To prepare for the argument that follows, let us make the following easy observation:
	
	Let $X$ be an open disc around a point $p\in\C$, let $S\subset X$ be an open sector with vertex $p$, and let us consider holomorphic functions $\varphi, \psi\colon S\to \C$ given by elements $\varphi,\psi\in z^{-1/n}\C[z^{-1/n}]$ for some choice $z^{-1/n}$ of an $n$-th root of a local coordinate $z$ at $p$. We assume that $\psi\prec_S \varphi$, i.e.\ $\real(\varphi-\psi)$ is bounded from below near $p$, but unbounded from above near $p$. Then:
	\begin{itemize}
		\item For any relatively compact subanalytic open subset $U\subseteq X\times\overline{\R}$ such that $\E^{\real\psi}_S(U)\neq 0$, we have $\E^{\real\varphi}_S(U)\neq 0$.
	\end{itemize}
	To see this, note that $\E^{\real \psi}_S(U)\neq 0$ means in particular that $U$ intersects any $\{t\geq -\real\psi + a\}$ nontrivially. Now let us consider an arbitrary $b\in \R$, then there exists $a\in\R$ such that $\real(\varphi-\psi) > b-a$, so $\{t\geq -\real \varphi + b\}\supseteq \{t\geq -\real\psi + a\}$. This means that also $\E^{\real\varphi}_S(U)\neq 0$.
	\begin{itemize}
		\item There exists a relatively compact subanalytic open subset $U\subseteq X\times\overline{\R}$ such that $\E^{\real\varphi}_S(U)=\k$ but $\E^{\real\psi}_S(U)=0$.
	\end{itemize}
	Such a set can be given explicitly: Let $V\subseteq X$ an open relatively compact set such that $\real(\varphi-\psi)$ is unbounded from above on $V$. Set $U\vcentcolon= \{(x,t)\in X\times \overline{\R}\mid x\in V, t\in\R, t<-\real\psi\}$.
	Certainly, $U$ does not intersect $\{t\geq -\real\psi+a\}$ for any $a>0$, so $\E^{\real\psi}_S(U)=0$. On the other hand, for any $a\in \R$ there exists $x\in V$ such that $\real\varphi(x)-\real\psi(x)>a$, and hence $U$ intersects any $\{t\geq -\real\varphi+a\}$. Note that $V$ can be chosen such that this intersection has only one ``relevant'' connected component (i.e.\ one connected component with $p$ in its boundary, which ``survives'' for $a\to\infty$), and so $\E^{\real\varphi}_S(U)=\k$.
\end{rem}

\begin{lemma}\label{lemma:latticeE}
	Let $L/K$ be a field extension and $X$ a Riemann surface. Let $S\subset X$ be an open sector with vertex at some $p\in X$ and let $\Phi\subset z^{-1/n}\C[z^{-1/n}]$ be a finite set of holomorphic functions on $S$ for some $n$-th root $z^{-1/n}$ of a local coordinate $z$ at $p$.
	Then, up to an automorphism of $\bigoplus_{\varphi\in \Phi} (\E^{\real\varphi}_{S,L})^{r_\varphi}$, any $K$-lattice $F_K\subset \bigoplus_{\varphi\in \Phi} (\E^{\real\varphi}_{S,L})^{r_\varphi}$ with $F_K\in\EbRcIK{X}$ is of the form $F_K= \bigoplus_{\varphi\in \Phi} (\E^{\real\varphi}_{S,K})^{r_\varphi}$.
\end{lemma}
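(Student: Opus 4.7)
The plan is to argue by induction on $|\Phi|$, using the block triangular structure of $\sEnd_{\EbIL{X}}(F_L)$ encoded by Lemma~\ref{lemma:homSumE}. A key preliminary observation: for any two distinct $\varphi,\psi\in\Phi$, the difference $\varphi-\psi$ is a nonzero element of $z^{-1/n}\C[z^{-1/n}]$, whose real part has definite asymptotics on any sub-sector of $S$ avoiding the finitely many Stokes directions. Hence $\varphi\prec_{S'}\psi$ or $\psi\prec_{S'}\varphi$ for some open $S'\subseteq S$; equivalently, the preorder $\prec^*$ on $\Phi$ defined by $\varphi\prec^*\psi$ iff $\varphi\prec_{S'}\psi$ for some such $S'$ is \emph{semi-complete}, and Lemma~\ref{lemma:homSumE} implies that at least one of $\sHom_L(\E^{\real\varphi}_{S,L},\E^{\real\psi}_{S,L})$, $\sHom_L(\E^{\real\psi}_{S,L},\E^{\real\varphi}_{S,L})$ vanishes. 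Since $S$ is relatively compact in $X$ and all sheaves in sight are supported on $\overline S$, we may work on the bordered space $\mathcal{S}=(S,\overline S)$, where Corollary~\ref{cor:morphExt} and Lemma~\ref{lemma:extConservative} apply directly.

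Base case $|\Phi|=1$: here $F_L\iso(\E^{\real\varphi}_{S,L})^r$, so Lemma~\ref{lemma:homSumE} gives $\sHom_L(\E^{\real\varphi}_{S,L},F_L)\iso L^r$, and by Corollary~\ref{cor:morphExt} the space $\sHom_K(\E^{\real\varphi}_{S,K},F_K)$ is $r$-dimensional over $K$. Choosing a $K$-basis $s_1,\dots,s_r$, the induced morphism $(\E^{\real\varphi}_{S,K})^r\to F_K$ extends to an isomorphism on the $L$-level (the $s_i\otimes 1$ forming an $L$-basis of $L^r$) and is itself an isomorphism by conservativity. The resulting identification of $F_L$ with $(\E^{\real\varphi}_{S,L})^r$ may differ from the one originally given by an automorphism of the latter, which is permitted.

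Inductive step ($|\Phi|\geq 2$): pick $\varphi_1\in\Phi$ in a minimal strongly connected component of $\prec^*$. Then for every $\psi\in\Phi\setminus\{\varphi_1\}$ one has $\varphi_1\prec^*\psi$ (either $\psi$ lies in a strictly higher component, or $\psi$ is in the same component, in which case both directions hold), so Lemma~\ref{lemma:homSumE} yields $\sHom_L(\E^{\real\varphi_1}_{S,L},\E^{\real\psi}_{S,L})=0$. Consequently, the subobject $(\E^{\real\varphi_1}_{S,L})^{r_{\varphi_1}}\subseteq F_L$ coincides with $\bigcap_{\psi\neq\varphi_1}\bigcap_{q\in\sHom_L(F_L,\E^{\real\psi}_{S,L})}\ker(q)$, and, via Corollary~\ref{cor:morphExt} and the exactness of kernels under extension of scalars, this intersection descends to a sub-$K$-lattice $F_K^{(1)}\subseteq F_K$ of $(\E^{\real\varphi_1}_{S,L})^{r_{\varphi_1}}$. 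By the base case $F_K^{(1)}\iso(\E^{\real\varphi_1}_{S,K})^{r_{\varphi_1}}$, while $F_K/F_K^{(1)}$ is a $K$-lattice of $\bigoplus_{\varphi\neq\varphi_1}(\E^{\real\varphi}_{S,L})^{r_\varphi}$ and hence, by the inductive hypothesis, isomorphic to $\bigoplus_{\varphi\neq\varphi_1}(\E^{\real\varphi}_{S,K})^{r_\varphi}$.

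It remains to split the short exact sequence $0\to F_K^{(1)}\to F_K\to F_K/F_K^{(1)}\to 0$ over $K$, where $p\colon F_K\to F_K/F_K^{(1)}$ denotes the projection. Since $F_L$ is split, an $L$-splitting $s_L$ exists, and by Corollary~\ref{cor:morphExt} we may write $s_L=1\otimes s_0+\sum_{i\geq 1}\ell_i\otimes s_i$ with $\{1,\ell_1,\dots\}$ linearly independent over $K$ and $s_i\in\sHom_K(F_K/F_K^{(1)},F_K)$. Composing with $p\otimes 1$ and matching against $1\otimes\id$, the linear independence forces $p\circ s_0=\id$, so $s_0$ is itself a $K$-splitting and $F_K\iso\bigoplus_\varphi(\E^{\real\varphi}_{S,K})^{r_\varphi}$ follows. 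I expect the main technical hurdle to be the careful verification that Corollary~\ref{cor:morphExt} applies after restriction to $\mathcal{S}$ and that kernels and quotients in $\EzRcIK{\mathcal{S}}$ interact well with extension of scalars; this should follow from the identification of $\R$-constructible enhanced ind-sheaves in degree $0$ with suitable subanalytic sheaves.
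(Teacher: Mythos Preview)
Your argument is essentially correct and takes a genuinely different route from the paper. The paper works concretely with subanalytic sections: it realizes $F_K$ as a subanalytic sheaf, picks a $K$-basis $v_1,\ldots,v_n$ of the space of sections over $V\times\R$, and then, using Remark~\ref{rem:f<g} to produce open sets on which only certain exponentials have nonzero sections, performs an explicit Gauss-type elimination to force the matrix $(v_1,\ldots,v_n)$ into a shape compatible with all the orderings $\prec_{S'}$, so that it represents an automorphism via Lemma~\ref{lemma:homSumE}. Your approach is more categorical: you strip off one exponential factor at a time by characterising it as an intersection of kernels (using the vanishing of Homs from Lemma~\ref{lemma:homSumE}), apply Corollary~\ref{cor:morphExt} to descend this intersection, and then split the resulting short exact sequence over $K$ by the usual ``extract the $1\otimes(\cdot)$ component'' trick. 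This is cleaner and avoids the hands-on manipulation of sections; the paper's approach has the advantage of making the automorphism completely explicit.

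There is one gap in your justification. You choose $\varphi_1$ in a minimal strongly connected component of $\prec^*$ and claim $\varphi_1\prec^*\psi$ for all $\psi\neq\varphi_1$. For $\psi$ in a different component this is fine (semi-completeness plus minimality of the component forces the edge to go from $\varphi_1$ to $\psi$), but for $\psi$ in the \emph{same} component your reasoning ``both directions hold'' is not valid: membership in a common strongly connected component only gives directed \emph{paths} both ways, not direct edges, and $\prec^*$ is not a priori transitive. In a general semi-complete digraph the minimal strongly connected component need not contain a vertex with out-edges to all others (think of a directed $3$-cycle). The fix is immediate and is exactly what the paper uses implicitly: choose a narrow sub-sector $S_0\subseteq S$ avoiding all Stokes directions of the finitely many differences $\varphi-\psi$; on $S_0$ the relation $\prec_{S_0}$ is a total order on $\Phi$, and its minimum $\varphi_1$ satisfies $\varphi_1\prec_{S_0}\psi$, hence $\varphi_1\prec^*\psi$, for every $\psi\neq\varphi_1$. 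With this correction your induction goes through.
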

\begin{proof}
	It is clear from Lemma~\ref{lemma:extConservative} that $F_K$ is concentrated in degree $0$.
	Let us assume for simplicity that $r_\varphi=1$ for any $\varphi\in\Phi$, and let us think of $\E^{\real\varphi}_{S,L}$ and $F_K$ as subanalytic sheaves on $X\times\overline{\R}$ (cf.\ Remark~\ref{rem:EfSuban}). Then for any open $U\subseteq X\times\overline{\R}$, the sections of $F_K$ on $U$ must be a $K$-lattice of the sections of  $\bigoplus_{\varphi\in \Phi} \E^{\real\varphi}_{S,L}$ on $U$ (see Lemma~\ref{lemma:latticeSectionsSuban}).
	
	Fix a numbering on the elements of $\Phi=\{\varphi_1,\ldots,\varphi_n\}$. For any relatively compact subanalytic open $V\subseteq S$, the space of sections of $\bigoplus_{\varphi\in \Phi} \E^{\real\varphi}_{S,L}$ on $V\times \R$ is $L^n$.
	A $K$-lattice for this space of sections is given by a vector space
	$$\sum_{j=1}^n K\cdot v_j$$
	for some $v_j\in L^n$ (linearly independent over $L$). Since $S$ is connected and restriction maps of $\bigoplus_{\varphi\in \Phi} \E^{\real\varphi}_{S,L}$ (recall that they mainly consist of projections and inclusions) need to be compatible with those of $F_K$, it is not difficult to see that one can choose the same vectors $v_j$ for each such $V$, and this lattice will also determine the sections of $F_K$ on all other open subsets of $X\times\overline{\R}$.
	
	We will show that a suitable choice of the vectors $v_1,\ldots,v_n$ yields a matrix $(v_1,\ldots,v_n)$ that defines an automorphism of $\bigoplus_{\varphi\in \Phi} \E^{\real\varphi}_{S,L}$ (cf.\ Lemma~\ref{lemma:homSumE}). Under (the inverse of) this automorphism, the lattice $F_K$ (with basis given by $v_1,\ldots,v_n$) will then correspond to the lattice $\bigoplus_{\varphi\in \Phi} \E^{\real\varphi}_{S,L}$ (whose basis is given by the standard basis vectors $e_1,\ldots,e_n$).
	
	Assume that there exists an open subset $S'\subseteq S$ such that $\varphi_\ell\prec_{S'} \varphi_k$ for some $k,\ell\in\{1,\ldots,n\}$. By possibly shrinking $S'$, we can assume that we have a total ordering $\varphi_{j_1}\prec_{S'} \ldots \prec_{S'} \varphi_{j_n}$, where $\{j_1,\ldots,j_n\}=\{1,\ldots,n\}$. Now we see (as in Remark~\ref{rem:f<g}) that there are relatively compact subanalytic open sets $U_1,\ldots,U_n\subset X\times\overline{\R}$ with $U_j\subset U_{j'}$ whenever $\varphi_{j'}\prec_{S'}\varphi_j$, and such that $\E^{\real\varphi_{j'}}_{S,L}(U_{j})=0$ if $\varphi_{j'}\prec_{S'} \varphi_j$ and $\E^{\real\varphi_{j'}}_{S,L}(U_{j})=L$ if $\varphi_{j}\preceq_{S'} \varphi_{j'}$, and restriction maps between these $U_{j}$ are given by projections. This means that for any $i\in\{1,\ldots,n\}$, the vectors $\mathrm{pr}_{\geq j_i}(v_1), \ldots, \mathrm{pr}_{\geq j_i}(v_n)$ span a $K$-vector space of dimension $i$, since they form a $K$-lattice of sections over $U_i$. (Here, $\mathrm{pr}_{\geq j_i}$ is the projection to the components $j_i,\ldots,j_n\in\{1,\ldots,n\}$.) 
	
	We can therefore assume without loss of generality that, for any $i\in\{2,\ldots,n\}$, the $j_i$-th component of $v_1, \ldots, v_{i-1}$ vanishes, as shown by the following argument: First of all, the $j_n$-th entries of $v_1,\ldots,v_n$ span a $K$-vector space of dimension $1$, meaning that at least one of them is nonzero and all the nonzero ones are $K$-multiples of each other. We can hence renumber the $v_j$ such that the $j_n$-th entry of $v_n$ is nonzero, and we can subtract suitable $K$-multiples of $v_n$ from $v_1,\ldots, v_{n-1}$ in order to obtain vectors $v_1',\ldots,v_n'$ which define the same $K$-lattice as $v_1,\ldots,v_n$ but satisfy the following properties: The $j_n$-th entry of $v_n'$ is nonzero and the $j_n$-th entries of $v'_1,\ldots,v'_{n-1}$ vanish. 
	Next, consider the two-dimensional vectors consisting of the $j_{n-1}$-th and $j_n$-th entries of $v_1',\ldots,v_n'$. Since they form a $K$-vector space of dimension $2$ and the $j_n$-th entries of $v_1',\ldots,v_{n-1}'$ vanish by construction, the $j_{n-1}$-th entry of at least one of the vectors $v_1',\ldots,v_{n-1}'$ must be nonzero, and all the others are $K$-multiples of this nonzero entry. Proceeding in a similar way as above, we obtain vectors $v_1'',\ldots,v_n''$ that generate the same $K$-lattice as $v_1',\ldots,v_n'$ and satisfies the following properties: The $j_n$-th entry of $v_n''$ is nonzero (in fact, $v_n''=v_n'$) and the $j_{n-1}$-th entry of $v_{n-1}''$ is nonzero, while the $j_n$-th entries of $v_1'',\ldots,v_{n-1}''$ and the $j_{n-1}$-th entries of $v_1'',\ldots,v_{n-2}''$ vanish. Continuing this process shows that we can indeed assume the $j_i$-th entry of $v_1,\ldots,v_{i-1}$ to be zero, for every $i\in\{2,\ldots,n\}$. In other words, the matrix $(v_1,\ldots,v_n)$ is upper-triangular with invertible diagonal entries when we consider its rows ordered by the indices $j_1,\ldots,j_n$. In particular, the $k$-th entry of $v_\ell$ is zero (recall that we started by assuming $\varphi_\ell\prec_{S'} \varphi_k$).
	
	We can then perform a similar argument (modifying the lattice by taking suitable $K$-linear combinations of the $v_j$) for any other such $S'$ and $\varphi_\ell\prec_{S'} \varphi_k$, whose details we leave to the reader (in particular, one needs to check that the vanishing entries already constructed are preserved under this procedure, which is due to the triangular structure of the matrix with respect to all the orderings already considered). Recalling Lemma~\ref{lemma:homSumE}, we finally see that the matrix $(v_1,\ldots,v_n)$ defines an automorphism of $\bigoplus_{\varphi\in \Phi} \E^{\real\varphi}_{S,L}$, and we can apply its inverse in order to transform the $K$-structure $F_K$ into $\bigoplus_{\varphi\in \Phi} \E^{\real\varphi}_{S,K}$, as desired.
	
	The proof for the general case is analogous: If $r_\varphi\geq 1$, one thinks in terms of graded vector spaces and block matrices instead.
\end{proof}

\begin{lemma}\label{lemma:localSystemDescent}
	Let $X$ be a real analytic manifold, $H\in\EbIL{X}$ and let $U\subseteq X$ be an open subset such that there is an isomorphism $\pi^{-1}L_U\otimes H \iso \e{L}{X}(\cL)$ for some local system $\cL$ on $U$ (extended by zero to $X$). Assume that $H$ has a $K$-lattice $H_K\in\EbRcIK{X}$. Then, there exists a local system $\cL_K$ on $U$ with $\cL\iso L_U\otimes_{K_U} \cL_K$ and $\pi^{-1}K_U\otimes H_K\iso \e{K}{X}(\cL_K)$.
\end{lemma}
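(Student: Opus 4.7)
The plan is to construct $\cL_K$ via the natural retraction of the embedding $\e{K}{X}$ onto its essential image. Set $F_K \vcentcolon= \pi^{-1}K_U \otimes H_K \in \EbRcIK{X}$; by hypothesis $\piLK{X} F_K \iso \pi^{-1}L_U \otimes H \iso \e{L}{X}(\cL)$, so $F_K$ is a $K$-lattice of $\e{L}{X}(\cL)$. Recall from \cite{DK16} that $\e{K}{X}$ admits a right adjoint $\RHomE_{K_X}(K^\EE_X,-)\colon \EbIK{X}\to\DbK{X}$ which inverts it on its essential image, i.e.\ $\RHomE_{K_X}(K^\EE_X, \e{K}{X}(\cF))\iso \cF$ for every $\cF\in\DbK{X}$. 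The natural candidate is therefore
$$\cL_K\vcentcolon= \RHomE_{K_X}(K^\EE_X, F_K)\in\DbK{X}.$$

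Next I would verify that $\cL_K$ really is a local system on $U$ (extended by zero to $X$) whose scalar extension recovers $\cL$. Proposition~\ref{prop:HomEext} applied with $F=K^\EE_X$ and $G=F_K$, combined with the obvious isomorphism $\piLK{X} K^\EE_X\iso L^\EE_X$, yields
$$L_X\otimes_{K_X} \cL_K \iso \RHomE_{L_X}(L^\EE_X,\, \piLK{X} F_K) \iso \RHomE_{L_X}(L^\EE_X,\, \e{L}{X}(\cL)) \iso \cL.$$
Since $\cL$ is a local system on $U$ extended by zero and extension of scalars from $K$ to $L$ is faithfully exact, $\cL_K$ must itself be concentrated in degree $0$, vanish outside $U$, and restrict to a local system on $U$ whose $L$-scalar extension is $\cL|_U$.

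Finally, to produce the asserted isomorphism of enhanced ind-sheaves, consider the counit of the adjunction $\epsilon\colon \e{K}{X}(\cL_K)\to F_K$. Applying $\piLK{X}(-)$ turns $\epsilon$ into the counit of the analogous $L$-adjunction at $\e{L}{X}(\cL)$, which is an isomorphism because $\e{L}{X}$ is fully faithful on $\DbL{X}$. Lemma~\ref{lemma:extConservative}, applied to the cone of $\epsilon$, then forces $\epsilon$ itself to be an isomorphism, yielding $\pi^{-1}K_U\otimes H_K\iso \e{K}{X}(\cL_K)$ as required. The main obstacle I expect is the descent step asserting that $\cL_K$ is literally a local system on $U$ (finite-dimensional stalks, local constancy, vanishing outside $U$): this has to be read off from the $L$-scalar extension and ultimately rests on faithful flatness of $L/K$ together with the compatibility of $\RHomE(K^\EE_X,-)$ with scalar extension provided by Proposition~\ref{prop:HomEext}.
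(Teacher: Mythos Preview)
Your proof is correct and takes a genuinely different route from the paper's. The paper argues locally: it covers $U$ by subanalytic opens $V$ on which $\cL$ trivializes, observes (as a degenerate instance of the argument for Lemma~\ref{lemma:latticeE}) that any $K$-lattice of $(\E^0_{V,L})^r$ is, after composing with an automorphism, the standard one $(\E^0_{V,K})^r$, and then glues using the stack property (via \cite[Proposition~3.8]{DKsh}) to obtain a sheaf $\cF_K$ with $\pi^{-1}K_U\otimes H_K\iso\e{K}{X}(\cF_K)$; only at the end does it invoke full faithfulness of $\e{L}{X}$ and standard descent facts for sheaves to conclude that $\cF_K$ is a local system with $L\otimes_K\cF_K\iso\cL$. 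Your approach instead produces $\cL_K$ in one stroke as $\RHomE_{K_X}(K^\EE_X,F_K)$ and leverages Proposition~\ref{prop:HomEext} together with Lemma~\ref{lemma:extConservative}, bypassing any local trivialization or gluing. This is conceptually cleaner and makes visible that the lemma is essentially a formal consequence of the compatibility results already established in Section~\ref{sec:GaloisDescentEb}; the paper's approach, by contrast, is more hands-on and keeps the parallel with Lemma~\ref{lemma:latticeE} explicit. One small caveat: your invocation of Proposition~\ref{prop:HomEext} requires $F_K=\pi^{-1}K_U\otimes H_K$ to lie in $\EbRcIK{X}$, which in turn needs $U$ to be subanalytic (so that $K_U$ is $\R$-constructible); this is not literally among the hypotheses of the lemma, though it holds in every application made in the paper.
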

\begin{proof}
	Since $\cL$ is a local system, there exists, for each $x\in U$, a subanalytic open neighbourhood $V$ such that 
	\begin{equation}\label{eq:isoLocSysBalls}
	\pi^{-1}L_V\otimes H\iso \e{L}{X}(L_V^r)= (\E^0_{V,L})^r
	\end{equation}
	for some $r\in\Z_{>0}$. By an argument similar to (but much simpler than) the one of Lemma~\ref{lemma:latticeE}, we see that the lattice of  $(\E^0_{V,L})^r$ given by the image of $\pi^{-1}K_V\otimes H_K$ under the isomorphism \eqref{eq:isoLocSysBalls} is -- up to an automorphism -- isomorphic to $(\E^0_{V,K})^r$. Hence, by composing with a suitable automorphism of $(\E^0_{V,L})^r$, the isomorphism \eqref{eq:isoLocSysBalls} comes from an isomorphism
	$$\pi^{-1}K_V\otimes H_K\iso (\E^0_{V,K})^r$$
	by applying $\piLK{X} (-)$. 
	This implies that $\pi^{-1}K_U\otimes H_K\iso \e{K}{X}(\cF_K)$ for some $\cF_K\in\Mod{K_U}$ by \cite[Proposition~3.8]{DKsh}.
	
	By the full faithfulness of $\e{L}{X}$ (see \cite[Proposition~4.7.15]{DK16}), we indeed have $L_X\otimes_{K_X}\cF_K\iso \cL$, and in particular $\cF_K$ is also a local system (cf.\ e.g.\ \cite[Lemma~2.13]{BHHS22} or \cite[Lemma~4.12]{Ho23GalShv}).
\end{proof}

\begin{thm}\label{thm:HLTdescent}
	Let $L/K$ be a field extension and let $H\in\EbRcIL{X}$. Assume that there exists $H_K\in\EbRcIK{X}$ which is a $K$-lattice for $H$, i.e.\ there is an isomorphism $H\iso \piLK{X} H_K$. If $H$ is of meromorphic normal form, then $H_K$ is of meromorphic normal form.
\end{thm}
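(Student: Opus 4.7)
The plan is to check each of the three conditions (a), (b), (c) of Definition~\ref{def:HLTtype} for $H_K$, using Lemmas~\ref{lemma:latticeE} and~\ref{lemma:localSystemDescent} as the two building blocks (exponentials and local systems) and Lemma~\ref{lemma:extConservative} (conservativity) whenever we want to transport an isomorphism from $L$ down to $K$.

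For condition (a), I would consider the natural morphism $\pi^{-1}K_{X^*}\otimes H_K \to H_K$ induced by $K_{X^*}\to K_X$. Its scalar extension to $L$ is (canonically identified with) the morphism $\pi^{-1}L_{X^*}\otimes H \to H$, which is an isomorphism by condition (a) for $H$. Hence its cone has vanishing scalar extension, and Lemma~\ref{lemma:extConservative} forces the cone to vanish, giving (a) for $H_K$.

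For condition (b), I would apply Lemma~\ref{lemma:localSystemDescent} with $U = X^*$. By condition (b') (see Remark~\ref{rem:finSectors}) for $H$, there exists a local system $\cL$ on $X^*$ with $\pi^{-1}L_{X^*}\otimes H\iso \e{L}{X}(\cL)$, so the lemma yields a local system $\cL_K$ on $X^*$ with $L_{X^*}\otimes_{K_{X^*}}\cL_K\iso \cL$ and $\pi^{-1}K_{X^*}\otimes H_K \iso \e{K}{X}(\cL_K)$. Restricting to any $X^*_{\beps}$ with $0<\beps\ll 1$ gives the required isomorphism for $H_K$.

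For condition (c), fix $p\in D$, a local coordinate $z$ at $p$, and a direction $\theta\in\R/2\pi\Z$. Condition (c) for $H$ supplies $r,\delta,n,\Phi_\theta,(r_\varphi)_{\varphi\in\Phi_\theta}$ and an isomorphism
$$\alpha\colon \pi^{-1}L_{S_\theta}\otimes H \;\overset{\sim}{\to}\; \bigoplus_{\varphi\in\Phi_\theta}\bigl(\E^{\real\varphi}_{S_\theta,L}\bigr)^{r_\varphi}$$
on $S_\theta = S_\theta(r,\delta)$. Under $\alpha$, the subobject $\pi^{-1}K_{S_\theta}\otimes H_K$ becomes a $K$-lattice of the right-hand side, to which Lemma~\ref{lemma:latticeE} applies: there is an automorphism $\beta$ of $\bigoplus_{\varphi\in\Phi_\theta}(\E^{\real\varphi}_{S_\theta,L})^{r_\varphi}$ carrying this $K$-lattice onto the standard $K$-lattice $\bigoplus_{\varphi\in\Phi_\theta}(\E^{\real\varphi}_{S_\theta,K})^{r_\varphi}$. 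Consequently $\beta\circ\alpha$ is obtained by scalar extension from an isomorphism $\pi^{-1}K_{S_\theta}\otimes H_K \iso \bigoplus_{\varphi\in\Phi_\theta}(\E^{\real\varphi}_{S_\theta,K})^{r_\varphi}$, which establishes (c) for $H_K$ at the direction $\theta$; since $p$ and $\theta$ were arbitrary, (c) holds in general.

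The main obstacle is not the overall structure — once (a), (b), (c) are isolated, they map transparently onto the three preparatory lemmas — but the point already highlighted in the statements of Lemmas~\ref{lemma:latticeE} and~\ref{lemma:localSystemDescent}: the isomorphism $\alpha$ inherited from $H$ need not send $\pi^{-1}K_{S_\theta}\otimes H_K$ to the \emph{standard} $K$-lattice, so one really has to post-compose with the automorphism $\beta$ (and similarly in the local-system case) in order to descend the isomorphism to $K$. After that correction, no additional patching or coherence across different sectors or different $\beps$ is needed, because conditions (b) and (c) in Definition~\ref{def:HLTtype} are formulated locally in $\beps$ and in $\theta$.
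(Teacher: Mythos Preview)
Your proof is correct and follows essentially the same route as the paper: condition (a) via conservativity of scalar extension (Lemma~\ref{lemma:extConservative}), condition (b) via Lemma~\ref{lemma:localSystemDescent}, and condition (c) via Lemma~\ref{lemma:latticeE}, including the key observation that one must post-compose with an automorphism to align the given $K$-lattice with the standard one. The only cosmetic difference is that for (b) you invoke the reformulation (b') on $U=X^*$ directly, whereas the paper applies the lemma on $X^*_{\beps}$ and remarks that this determines the local system on $X^*$; both arguments are equivalent.
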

\begin{proof}
	Let us check that $H_K$ is of meromorphic normal form:
	
	The condition (a) from Definition~\ref{def:HLTtype} is easily checked: Let us consider the natural morphism
	$\pi^{-1}K_{X^*}\otimes H_K \To H_K$.
	By assumption, applying the functor $\pi^{-1}L_X\linebreak\otimes_{\pi^{-1}K_X}(-)$ to it makes it an isomorphism and hence it is itself an isomorphism by Lemma~\ref{lemma:extConservative}.
	
	Property (b) is exactly what we proved in Lemma~\ref{lemma:localSystemDescent} (note that the datum of a local system on any $X^*_{\beps}$ for $0<\beps\ll 1$ determines one on $X^*$).
	
	Property (c) follows directly from Lemma~\ref{lemma:latticeE}: Given an isomorphism
	$$\xi\colon \pi^{-1}L_{S_\theta}\otimes H \iso \bigoplus_{\varphi\in \Phi_\theta}(\E^{\real \varphi}_{S_\theta,L})^{r_\varphi}$$
	as in Definition~\ref{def:HLTtype}(c) for $H$, we see that, due to the $L$-linearity of $\xi$, the object $\xi(\pi^{-1}K_{S_\theta}\otimes H_K)\subset \bigoplus_{\varphi\in \Phi_\theta}(\E^{\real \varphi}_{S_\theta,L})^{r_\varphi}$ is a $K$-lattice. We therefore see that, by Lemma~\ref{lemma:latticeE}, the isomorphism above comes, after composing it with an automorphism of the right-hand side, from an isomorphism
	$$\pi^{-1}K_{S_\theta}\otimes H_K \iso \bigoplus_{\varphi\in \Phi_\theta}(\E^{\real \varphi}_{S_\theta,K})^{r_\varphi}.$$
	This completes the proof.
\end{proof}

\subsection{Generalized monodromy data as gluing data for enhanced ind-sheaves}

Given an enhanced ind-sheaf $H\in\EbRcIk{X}$ of meromorphic normal form with respect to $D\subset X$, we can describe it purely in terms of linear algebra data. Let us recall the description of these so-called \emph{generalized monodromy data} here (in the case $\k=\C$, they are what is often called \emph{Stokes data}).

Since $H$ is of meromorphic normal form, any $p\in D$ admits data as in Remark~\ref{rem:finSectors}, in particular we can fix 
\begin{align}\label{eq:alphaFixed}
	\text{a finite collection of open sectors $S_1^p,\ldots,S_{k_p}^p$ and isomorphisms}\\
	\xi^p_j\colon \pi^{-1}\k_{S^p_j}\otimes H\overset{\sim}{\To} \bigoplus_{\varphi\in\Phi^p}(\E^{\real \varphi}_{S^p_j})^{r_\varphi} \quad \text{for $j\in\{1,\ldots,k_p\}$.}\notag
\end{align}

\begin{defi}
Let $H$ be of meromorphic normal form. We denote by $D_\mathrm{reg}\vcentcolon= \{p\in D\mid \Phi^p = \{0\}\}\subseteq D$ the set of \emph{regular singularities} of $H$. We also set $D_\mathrm{irr}\vcentcolon= D\setminus D_\mathrm{reg}$ and call its elements \emph{irregular singularities} of $H$.
\end{defi}

For any $p\in D_\mathrm{irr}$, we choose a local coordinate at $p$. We then choose a collection of real numbers $\beps=(\varepsilon_p)_{p\in D}$ with $0\leq \beps \ll 1$ such that the following holds: For any $p\in D_\mathrm{irr}$, we have $\varepsilon_p>0$ and $\overline{B}_{\varepsilon_p}(p)\setminus \{p\}\subset \bigcup_{j=1}^{k_p} S^p_j$, and for $p\in D_\mathrm{reg}$, we have $\varepsilon_p=0$ (so that $\overline{B}_{\varepsilon_p}(p)=\{p\}$, independently of the choice of a chart around $p$). 
This determines $X^*_{\beps}\subset X$. By Definition~\ref{def:HLTtype} (b) and (c), it is clear that there is a local system $\cL$ on $X^*$ such that we can choose an isomorphism
\begin{equation}\label{eq:gammaFixed}
\gamma\colon \pi^{-1}\k_{X^*_{\beps}} \otimes H \overset{\sim}{\To} \e{\k}{X}(\cL)
\end{equation}
(Note that this does not depend on the concrete choice of $\beps$. Note also that, here again, we tacitly consider the local system $\cL$ on $X^*$ extended by zero as a constructible sheaf on $X$.)

In view of wanting to use Lemma~\ref{lemma:homSumE}, we also fix the following data:
\begin{itemize}
	\item[(i)] For any $p$, a numbering (total order) on $\Phi^p$ and, for any $\varphi\in\Phi^p$, a numbering on the factors of the power $(\E^{\real\varphi}_{S^p_j})^{r_\varphi}$, so that there is a total order on the summands of $$\bigoplus_{\varphi\in\Phi^p}(\E^{\real\varphi}_{S^p_j})^{r_\varphi}=\bigoplus_{\varphi\in\Phi^p}\bigoplus_{m=1}^{r_\varphi} \E^{\real\varphi}_{S^p_j}$$ for any $j$.
	\item[(ii)] A point $x\in X^*_{\beps}$, a basis of the stalk $\cL_x$, and for each $p\in D_\mathrm{irr}$ a point $y_p\in S^p_1\cap X^*_{\beps}$ and a path in $X^*_{\beps}$ from $x$ to $y_p$. (Indeed, instead of choosing $y_p$ and such a path, it suffices to fix the homotopy class of this path in $X^*$ among all paths starting in $x$ and ending in an arbitrary point of the contractible set $S_1^p$.) Together with the above $\gamma$, this gives in particular an isomorphism $\pi^{-1}\k_{S^p_1\cap X^*_{\beps}} \otimes H \iso \bigoplus_{m=1}^r \E^0_{S^p_1\cap X^*_{\beps}}$ for any $p$.
\end{itemize}

We can then associate the following generalized monodromy data to $H$:
Let $p\in D_\mathrm{irr}$, then for any $j\in \{1,\ldots,k_p\}$ we get two (in general different) isomorphisms on the overlap $S^p_{j,j+1}$
$$\xi^p_j, \xi^p_{j+1}\colon \pi^{-1}\k_{S^p_{j,j+1}}\otimes H\overset{\sim}{\To} \bigoplus_{\varphi\in\Phi^p}(\E^{\real\varphi}_{S^p_{j,j+1}})^{r_\varphi}.$$
(These are induced by $\zeta^p_j$ and $\zeta^p_{j+1}$ above, and we denote them by the same symbols.)
From these, we get an automorphism (a transition or gluing automorphism)
$$\sigma^p_j\vcentcolon=\xi^p_{j+1} \circ (\xi^p_j)^{-1}\in \mathrm{End}\Big( \bigoplus_{\varphi\in\Phi^p}(\E^{\real\varphi}_{S^p_{j,j+1}})^{r_\varphi} \Big),$$
which is represented by an invertible square matrix $\Sigma^p_j\in \k^{r\times r}$ by Lemma~\ref{lemma:homSumE}.

Similarly, on the overlap $U^p_{\beps}\vcentcolon=S^p_1\cap X^*_{\beps}$, we get two isomorphisms
$$\xi^p_1,\gamma\colon \pi^{-1}\k_{U^p_{\beps}} \otimes H \overset{\sim}{\To} (\E^0_{U^p_{\beps}})^r$$
(note that $\E^{\real\varphi}_{U^p_{\beps}}\iso \E^0_{U^p_{\beps}}$). Then the composition
$$c_p\vcentcolon= \gamma\circ(\xi^p_1)^{-1}\in\mathrm{Hom}\big( (\E^0_{U^p_{\beps}})^r , (\E^0_{U^p_{\beps}})^r\big)$$
is again represented by an invertible square matrix $C_p\in\k^{r\times r}$.

\begin{defi}\label{def:GenMonData}
	The collection of data consisting of
	\begin{itemize}
		\item the subset $D_\mathrm{irr}\subseteq D$,
		\item the open sectors $S_j^p\subset X$ for any $p\in D_\mathrm{irr}$ and any $j\in\{1,\ldots,k_p\}$,
		\item the set $\Phi^p\subset \cO_X(S^p_1)$ together with a total order for every $p\in D_\mathrm{irr}$, and the number $r_\varphi\in\Z_{>0}$ for any $\varphi\in\Phi^p$,
		
		\item the matrices $\Sigma_j^p\in\k^{r\times r}$ for any $p\in D_\mathrm{irr}$ and any $j\in\{1,\ldots,k_p\}$,
		\item the matrix $C_p\in\k^{r\times r}$ for any $p\in D_\mathrm{irr}$,
		\item the local system $\cL\in\Mod{\k_{X^*}}$ on $X^*$, as well as the point $x\in X$, the basis of $\cL_x$ and the homotopy types of the paths from \textnormal{(ii)}
	\end{itemize}
is called \emph{generalized monodromy data} associated to $H$ with respect to all the choices \eqref{eq:alphaFixed}, \eqref{eq:gammaFixed}, \textnormal{(i)} and \textnormal{(ii)} fixed above.
\end{defi}
Of course, the generalized monodromy data just defined highly depend on the choices made (cf.\ also Remark~\ref{rem:genMonCan}).

The following statement is easily proved.
\begin{lemma}
	The generalized monodromy data determine $H$ up to isomorphism. More precisely, assume we are given $D\subset X$, an object $H\in\EbRcIk{X}$ of meromorphic normal form at $D$ and generalized monodromy data for $H$ with respect to certain choices as above. Then, if $H'\in\EbRcIk{X}$ is of meromorphic normal form at $D$ and has generalized monodromy data (for certain choices as above) that coincide with that of $H$ (meaning that the local systems associated to $H$ and $H'$ are isomorphic via an isomorphism respecting the given bases at $x$, and all the other data are the same for $H$ and $H'$), then $H\iso H'$.
\end{lemma}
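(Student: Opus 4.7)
The plan is to construct an explicit isomorphism $f \colon H \to H'$ by a Čech-style gluing argument in the stack of enhanced ind-sheaves. Because both $H$ and $H'$ satisfy Definition~\ref{def:HLTtype}(a), it is enough to produce an isomorphism $\pi^{-1}\k_{X^*}\otimes H \iso \pi^{-1}\k_{X^*}\otimes H'$, and such an isomorphism can be built on the open cover
$$\mathfrak{U} \vcentcolon= \{S^p_j : p \in D_\mathrm{irr},\, 1\leq j \leq k_p\} \cup \{X^*_{\beps}\}$$
of $X^*$. The required gluing is legitimised by the stack property of $U \mapsto \EzRcIk{U_\infty}$, already exploited in the proof of Proposition~\ref{prop:descRc0}.

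The local isomorphisms come directly from the fixed trivializations. On $S^p_j$, the shared target $\bigoplus_{\varphi \in \Phi^p}(\E^{\real \varphi}_{S^p_j})^{r_\varphi}$ of $\xi^p_j$ and of its counterpart $(\xi^p_j)'$ for $H'$ is literally the same object by hypothesis (the sectors, the set $\Phi^p$ with its ordering, and the multiplicities $r_\varphi$ are all among the fixed data), so we set $f^p_j \vcentcolon= ((\xi^p_j)')^{-1} \circ \xi^p_j$. On $X^*_{\beps}$, the given isomorphism $\cL \iso \cL'$ compatible with the chosen bases at $x$ induces an isomorphism $\e{\k}{X}(\cL) \iso \e{\k}{X}(\cL')$, and composing with $\gamma$ and $(\gamma')^{-1}$ yields $g$. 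Compatibility of $f^p_j$ with $f^p_{j+1}$ on $S^p_{j,j+1}$ is then immediate from the equality $\sigma^p_j = (\sigma^p_j)'$ of the gluing automorphisms (i.e.\ of the Stokes matrices $\Sigma^p_j$), and compatibility of $f^p_1$ with $g$ on $U^p_{\beps}$ follows from the equality $c_p = (c_p)'$ encoded in $C_p$, since the paths from $x$ to $y_p$ fixed in (ii) are the same for both objects.

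The main obstacle will be checking compatibility on the remaining overlaps $S^p_j \cap X^*_{\beps}$ for $j > 1$. By shrinking $\beps$ if necessary, we may assume that each such intersection is connected; the equality $f^p_j = g$ there can then be deduced by expressing both sides, through the already-established relations on $S^p_{1,2}, \ldots, S^p_{j-1,j}$ and on $U^p_{\beps}$, as the same product of $C_p$ with $\Sigma^p_1, \ldots, \Sigma^p_{j-1}$ (invoking Lemma~\ref{lemma:homSumE} to translate these compositions into matrix products). Once the local isomorphisms have been glued to an isomorphism $\pi^{-1}\k_{X^*} \otimes H \iso \pi^{-1}\k_{X^*} \otimes H'$, condition~(a) of Definition~\ref{def:HLTtype} upgrades it to the desired isomorphism $H \iso H'$ on all of $X$. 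Conceptually, the argument is nothing but the observation that the generalized monodromy data form a complete descent datum on the nerve of $\mathfrak{U}$; the actual work is the combinatorial bookkeeping, most of which is absorbed by our careful fixing of bases, paths, and orderings in (i) and (ii).
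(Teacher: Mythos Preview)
Your argument is correct and is precisely the kind of Čech gluing the paper has in mind; in fact the paper gives no proof at all beyond the remark that the statement ``is easily proved'', so your write-up simply makes explicit what is left implicit there. The only point worth tightening is the compatibility on $S^p_j\cap X^*_{\beps}$ for $j>1$: your inductive reduction is fine, but you should remark that on a connected such overlap the hom-space in question is identified (via the trivializations and Lemma~\ref{lemma:homSumE}) with $\k^{r\times r}$, so agreement on the nonempty triple overlap $S^p_{j-1,j}\cap X^*_{\beps}$ forces agreement on all of $S^p_j\cap X^*_{\beps}$.
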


\begin{rem}\label{rem:genMonCan}
	The matrices $\Sigma^p_j$ are what is usually called ``Stokes matrices'' in the context of solutions of differential equations. They describe the behaviour of the solutions around the singularity. On the other hand, the matrices $C_p$ are similar to what is usually referred to as ``connection matrices''. They give the relation between solutions around a singularity with the generic solutions away from the singularities.
	
	Let us also note that there are ways to make the choice and size of the sectors $S^p_j$ as well as the isomorphisms $\xi^p_j$ more canonical, which makes the definition of generalized monodromy data less ambiguous and more natural, but we will not insist on these choices here and only use the existence of these sectors and isomorphisms (see e.g.\ \cite{BoalchSMID,BoalchTop} and references therein for a thorough study of intrinsic generalized monodromy data for meromorphic connections). Let us just remark that with appropriate choices of sectors and isomorphisms, for example in the case of unramified exponents of one level, one gets Stokes matrices with a certain block-triangular structure and the diagonal blocks of all but one Stokes matrix around each point $p\in D$ are identities.
\end{rem}

\begin{defi}
	Let $L$ be a field and let $K\subset L$ be a subfield. Let $H\in\EbRcIL{X}$ be of meromorphic normal form. We say that generalized monodromy data associated to $H$ (as in Definition~\ref{def:GenMonData}) \emph{have entries in $K$} if all the matrices $\Sigma^p_j\in L^{r\times r}$ and $C_p\in L^{r\times r}$ have entries in $K$ and the local system $\cL\in\Mod{L_{X^*}}$ is defined over $K$, i.e.\ there exists a local system $\cL_K\in\Mod{K_{X^*}}$ and an isomorphism $\cL\iso L_X\otimes_{K_X}\cL_K$. 
	
	We say that $H$ \emph{admits generalized monodromy data with entries in $K$} (or \emph{there exist generalized monodromy data for $H$ with entries in $K$}) if there exist suitable choices with respect to which the generalized monodromy data associated to $H$ have entries in $K$.
\end{defi}

From what we proved above, we can now deduce the following statement.
\begin{cor}\label{prop:HLTmonodromyDescent}
	Let $L/K$ be a field extension, and let $H\in \EbRcIL{X}$ be of meromorphic normal form. Assume that $H$ has a $K$-structure $H_K\in\EbRcIK{X}$. Then there exist generalized monodromy data for $H$ with entries in $K$.
\end{cor}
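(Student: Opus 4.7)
By Theorem~\ref{thm:HLTdescent}, the lattice $H_K\in\EbRcIK{X}$ is itself of meromorphic normal form with respect to $D$. We now make all the choices required in order to define generalized monodromy data for $H_K$ over the field $K$: a finite collection of sectors $S^p_j$ at each $p\in D_\mathrm{irr}$ together with isomorphisms
$$\xi^p_{K,j}\colon \pi^{-1}K_{S^p_j}\otimes H_K\overset{\sim}{\To}\bigoplus_{\varphi\in\Phi^p}(\E^{\real\varphi}_{S^p_j,K})^{r_\varphi},$$
a local system $\cL_K\in\Mod{K_{X^*}}$ with an isomorphism $\gamma_K\colon\pi^{-1}K_{X^*_{\beps}}\otimes H_K\overset{\sim}{\To}\e{K}{X}(\cL_K)$, and the combinatorial choices (i) and (ii) (orderings, a basepoint $x$, a basis of $(\cL_K)_x$, and homotopy classes of paths to the $y_p$). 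This produces matrices $\Sigma^p_{K,j}\in K^{r\times r}$ and $C_{K,p}\in K^{r\times r}$ and a local system $\cL_K$ defined over $K$.

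Next, the plan is to transport these choices to $H$ via extension of scalars. Applying the functor $\piLK{X}(-)$ to $\xi^p_{K,j}$ and $\gamma_K$ and using the obvious identifications
$$\piLK{X}\E^{\real\varphi}_{S^p_j,K}\iso\E^{\real\varphi}_{S^p_j,L},\qquad L_X\otimes_{K_X}\e{K}{X}(\cL_K)\iso\e{L}{X}(L_X\otimes_{K_X}\cL_K),$$
together with the given isomorphism $H\iso\piLK{X}H_K$, yields isomorphisms $\xi^p_j$ and $\gamma$ for $H$ as in \eqref{eq:alphaFixed} and \eqref{eq:gammaFixed}. The orderings, basepoint, basis of $\cL_x=L\otimes_K(\cL_K)_x$ induced from the basis of $(\cL_K)_x$, and homotopy classes of paths then provide the remaining choices (i) and (ii) for $H$.

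Finally, by construction, the transition automorphisms $\sigma^p_j=\xi^p_{j+1}\circ(\xi^p_j)^{-1}$ and $c_p=\gamma\circ(\xi^p_1)^{-1}$ computed for $H$ are exactly obtained by applying $\piLK{(-)}$ to the analogous automorphisms computed for $H_K$. Since Lemma~\ref{lemma:homSumE} is natural with respect to field extensions (the matrix representation is obtained from the basis of $\sHom$-spaces given by the chosen orderings, and this basis is compatible with extension of scalars by Corollary~\ref{cor:morphExt}), the matrices representing these automorphisms for $H$ are precisely $\Sigma^p_{K,j}$ and $C_{K,p}$, viewed as elements of $L^{r\times r}$ via $K\subset L$. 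The associated local system is $\cL\iso L_X\otimes_{K_X}\cL_K$, and hence is defined over $K$. This exhibits a choice of generalized monodromy data for $H$ whose entries lie in $K$, as required.

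The only nontrivial point in the argument is the translation from $H_K$ to $H$; this hinges on the fact that extension of scalars is well-behaved with respect to all the constructions entering the definition of generalized monodromy data (the decompositions into exponentials, the local system, the $\sHom$-spaces between direct sums of exponentials), for which we appeal, respectively, to Lemma~\ref{lemma:latticeE}, Lemma~\ref{lemma:localSystemDescent}, and Corollary~\ref{cor:morphExt}. All the heavy lifting has already been done in Theorem~\ref{thm:HLTdescent}.
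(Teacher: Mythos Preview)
Your proof is correct and follows exactly the same approach as the paper's: invoke Theorem~\ref{thm:HLTdescent} to conclude that $H_K$ is of meromorphic normal form, choose generalized monodromy data for $H_K$ over $K$, and observe that extension of scalars transports these to generalized monodromy data for $H$ with entries in $K$. The paper's proof is a two-sentence sketch of precisely this argument; you have simply spelled out the transport step in more detail. One minor remark: your appeal to Corollary~\ref{cor:morphExt} is not quite on target (it needs $L/K$ finite or $X$ compact, neither of which is assumed here), but the compatibility of the matrix representation with extension of scalars is already immediate from the explicit description in Lemma~\ref{lemma:homSumE}, so no harm is done.
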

\begin{proof}
	By Theorem~\ref{thm:HLTdescent}, if $H=\piLK{X} H_K$, then $H_K$ is also of meromorphic normal form. Therefore, $H$ admits generalized monodromy data that are induced by generalized monodromy data of $H_K$, and hence the matrices $\Sigma^p_j$, $C_p$ have entries in $K$ and also the local system is defined over $K$.
\end{proof}

\subsection{Meromorphic connections and Stokes data}
Let now $\cM$ be a meromorphic connection on $X$ with poles at $D$, i.e.\ a holonomic $\cD_X$-module such that $\cM(*D)\iso \cM$ and $\singsupp \cM=D$.

We briefly recall why $\SolE{X}(\cM)$ is of meromorphic normal form:

First of all, $\cM(*D)\iso \cM$ implies that $\pi^{-1}\C_{X^*}\otimes\SolE{X}(\cM)\iso \SolE{X}(\cM)$.

Since a meromorphic connection is generically an integrable connection, we also have, for any $0<\beps\ll 1$, an isomorphism
$$\pi^{-1}X^*_{\beps}\otimes \SolE{X}(\cM)\iso \e{\C}{X}(\cL),$$
where $\cL$ is the local system of solutions $\cL\vcentcolon= \Sol{X^*_{\beps}}(\cM|_{X^*_{\beps}})$, extended by $0$ to $X$.

Let $p\in D$, then the Levelt--Turrittin theorem gives us a decomposition (up to ramification) of the formalization of the stalk of $\cM$ at $p$
$$(\rho^*\cM)\hat{|}_p \iso \big(\bigoplus_ {i\in I} \cE^{\varphi_i} \tensOD \cR_i\big)\hat{|}_p,$$
where $\rho(t)=t^n$ is a ramification map in a small neighbourhood of $p$, the $\varphi_i(z^{-1})\in z^{-1}\C[z^{-1}]$ are (pairwise distinct) polar parts of Laurent series in a local coordinate $z$ at $p$ and the $\cR_i$ are regular holonomic $\cD_X$-modules.

By the Hukuhara--Turrittin theorem, locally on sufficiently small sectors around $p$, this decomposition lifts to an analytic decomposition of $\cM$, which is usually formulated as a statement on the real blow-up space of $X$ at the points of $D$. We will not go into too much detail here, and rather refer to the existing literature on asymptotic expansions in this context (see e.g.\ \cite{Was65}, \cite{Mal91}).

What we need is the following consequence on the level of the enhanced ind-sheaf associated to $\cM$: Let $p\in D$ and let $z$ be a local coordinate of $X$ at $p$. Then for any direction $\theta\in\R/2\pi\Z$, there exists a sufficiently small sector $S=S_\theta(r,\delta)$ and a finite set $\Phi$ of holomorphic functions on $S$ such that there is an isomorphism
$$\pi^{-1}\C_{S}\otimes \SolE{X}(\cM)\iso \bigoplus_{\varphi\in\Phi}(\E^{\real\varphi}_{S})^{r_\varphi}.$$

\begin{rem}
	The right-hand side can be described more explicitly: Concretely, $\Phi$ is the set of functions $\varphi_i(\zeta_n^j z^{1/n})$ for all $i\in I$ and $j\in \{1,\ldots,n-1\}$, where $\zeta_n\vcentcolon= e^{2\pi i/n}$ is a primitive $n$-th root of unity and $z^{1/n}$ is the choice of an $n$-th root on $S$ of a local coordinate $z$ around $p$. In particular, the functions $\varphi$ have Puiseux series expansions with a pole at $p$. The $r_\varphi$ are the ranks of the corresponding $\cR_i$.
\end{rem}

Indeed, there is the following equivalence (see \cite[Proposition~4.6]{MocCurveTest2}, \cite[Proposition~6.2.4]{DKmicrolocal}).
\begin{prop}\label{prop:MeroConnNormalForm}
	The category of meromorphic connections on $X$ with poles at $D$ is equivalent, via the functor $\SolE{X}$, to the subcategory of $\EbRcIC{X}$ consisting of objects of meromorphic normal form with respect to $D$.
\end{prop}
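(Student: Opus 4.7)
The plan is to establish the equivalence in three steps: (i) full faithfulness of $\SolE{X}$ restricted to meromorphic connections, (ii) that the essential image lies in the subcategory of meromorphic normal form, and (iii) essential surjectivity. Steps (i) and (ii) will be essentially immediate consequences of what precedes the proposition; step (iii) is where the real work lies.

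For step (i), meromorphic connections on $X$ with poles in $D$ form a full subcategory of $\ModholD{X}$, so full faithfulness will follow directly from the fact that $\SolE{X}\colon \DbholD{X}\hookrightarrow \EbRcIC{X}$ is fully faithful, which is the Riemann--Hilbert theorem of \cite{DK16}. For step (ii), the three defining properties of Definition~\ref{def:HLTtype} can be checked one by one: condition~(a) comes from $\cM(*D)\iso \cM$ and the compatibility of $\SolE{X}$ with localization along $D$; condition~(b) uses that $\cM|_{X^*}$ is an integrable connection whose holomorphic solutions form a local system $\cL$, together with the comparison between the classical and enhanced solution functors on regular objects; condition~(c) is the translation via $\SolE{X}(\cE^\varphi)\iso \E^{\real\varphi}_{X\setminus D|X}$ of the Hukuhara--Turrittin decomposition, which was exactly the content of the discussion preceding the proposition.

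For step (iii), given $H\in\EbRcIC{X}$ of meromorphic normal form, I want to construct a meromorphic connection $\cM$ with $\SolE{X}(\cM)\iso H$. The idea will be local-to-global: on $X^*$, the local system $\cL$ of condition~(b) corresponds via the classical Riemann--Hilbert correspondence to an integrable connection $\cN_0$ on $X^*$; near each $p\in D_\mathrm{irr}$, the local decomposition of $H$ into exponentials from condition~(c) specifies a formal model of the desired connection as a direct sum of $\cE^\varphi$'s tensored with regular pieces; and the transition isomorphisms between sectors (encoded by Stokes matrices $\Sigma^p_j$) together with the connection matrices $C_p$ record how these local models must be glued to $\cN_0$.

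The main obstacle will be producing a genuine meromorphic connection $\cM$ from this collection of local data. Rather than attempting the gluing by hand in the enhanced ind-sheaf framework, my plan is to invoke the classical equivalence between germs of meromorphic connections at an irregular singularity on a curve and Stokes-filtered local systems (after Deligne, Malgrange, Sibuya, Babbitt--Varadarajan; and further refined by Mochizuki and Sabbah): the generalized monodromy data associated to $H$ encode precisely such a Stokes-filtered local system, and this equivalence produces the desired $\cM$. Compatibility of this classical equivalence with $\SolE{X}$---verifiable by inspection on exponential building blocks, where both sides are given by explicit formulas---will then yield $\SolE{X}(\cM)\iso H$, completing essential surjectivity and hence the proposition.
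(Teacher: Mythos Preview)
Your proposal is correct in outline, and steps (i) and (ii) match the paper exactly. For step (iii), however, you take a genuinely different route from the paper.

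The paper's essential surjectivity argument stays entirely within the enhanced ind-sheaf framework: it cites a local result of Mochizuki \cite[Lemma~4.8]{MocCurveTest2}, which already produces on each small disc $B_p$ around $p\in D$ a meromorphic connection $\cM_p$ with $\SolE{B_p}(\cM_p)\iso H|_{B_p}$; away from $D$ one uses the regular Riemann--Hilbert correspondence to realize the local system $\cL$ as solutions of an integrable connection; and then one glues these D-modules directly. Your approach instead passes through the classical equivalence with Stokes-filtered local systems: you extract generalized monodromy data from $H$, feed them into the Deligne--Malgrange--Sibuya classification to obtain $\cM$, and then argue that $\SolE{X}(\cM)\iso H$ by checking that both have the same generalized monodromy data. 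This is a legitimate strategy---indeed, the paper itself remarks (in the discussion following Corollary~\ref{cor:StokesMatDescent}) that one can go through Stokes filtrations via the functor of \cite{DK23}---but your final compatibility step is more delicate than ``inspection on exponential building blocks'' suggests: you need to know that the Stokes matrices extracted from $\cM$ by classical asymptotic analysis coincide with those extracted from $\SolE{X}(\cM)$ via the sectorial decompositions of Definition~\ref{def:HLTtype}(c), and then invoke the lemma that generalized monodromy data determine an object of meromorphic normal form up to isomorphism. This is true, but it is exactly the comparison between two Riemann--Hilbert frameworks that Mochizuki's local result lets the paper bypass. The paper's approach is therefore shorter and self-contained within the enhanced setting; yours has the advantage of making the link to the classical Stokes picture explicit.
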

\begin{proof}
	The functor is certainly fully faithful by \cite[Theorem~9.5.3]{DK16}. It therefore remains to show that it is essentially surjective. Let therefore $H$ be of meromorphic normal form.
	The statement is proved on discs in \cite[Lemma~4.8]{MocCurveTest2}, so on suitable small discs $B_p$ around any $p\in D$, we can find $\cM_p$ such that $\SolE{B_p}(\cM_p)\iso H|_{B_p}$. Furthermore, on $X^*_{\beps}$ we can certainly find (by the regular Riemann--Hilbert correspondence) a $\cD_{X^*_{\beps}}$-module $\cM_{\beps}$, locally free over $\cO_{X^*_{\beps}}$, such that $\SolE{X^*_{\beps}}(\cM_{\beps})\iso H|_{X^*_{\beps}}$. All these D-modules glue to a single meromorphic connection on $X$ with poles at $D$ (cf., e.g., the argument in \cite[Proposition~2.17]{HoDiss}).
\end{proof}

We can now draw two direct consequences of our studies above. The first one follows directly from Corollary~\ref{prop:HLTmonodromyDescent}.
\begin{cor}\label{cor:StokesMatDescent}
	Let $\cM$ be a meromorphic connection with poles at $D$. If $\SolE{X}(\cM)$ has a $K$-structure $H_K\in\EbRcIK{X}$, then it admits generalized monodromy data defined over $K$, and in particular $\cM$ admits Stokes and connection matrices with entries in $K$.
\end{cor}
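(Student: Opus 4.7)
The plan is to observe that this corollary is essentially a packaging of what has already been established: Proposition~\ref{prop:MeroConnNormalForm} identifies meromorphic connections (up to isomorphism) with enhanced ind-sheaves of meromorphic normal form via $\SolE{X}$, and Corollary~\ref{prop:HLTmonodromyDescent} says that a $K$-lattice on such an enhanced ind-sheaf descends to generalized monodromy data with entries in $K$. So the strategy is simply to chain these two statements.

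More concretely, I would first note that since $\cM$ is a meromorphic connection along $D$, Proposition~\ref{prop:MeroConnNormalForm} guarantees that $\SolE{X}(\cM)\in\EbRcIC{X}$ is of meromorphic normal form with respect to $D$ (in the sense of Definition~\ref{def:HLTtype}). The hypothesis gives us a $K$-lattice $H_K\in\EbRcIK{X}$, i.e.\ an isomorphism $\SolE{X}(\cM)\iso \pi^{-1}\C_X\otimes_{\pi^{-1}K_X}H_K$. Then Theorem~\ref{thm:HLTdescent} ensures that $H_K$ is itself of meromorphic normal form over $K$, and Corollary~\ref{prop:HLTmonodromyDescent} yields generalized monodromy data for $\SolE{X}(\cM)$ whose transition matrices $\Sigma_j^p$ and connection matrices $C_p$ lie in $K^{r\times r}$, together with a local system $\cL$ on $X^*$ defined over $K$.

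Finally, one should unpack the phrase ``in particular $\cM$ admits Stokes and connection matrices with entries in $K$'': the matrices $\Sigma_j^p$ in Definition~\ref{def:GenMonData} are precisely (by construction and by Remark~\ref{rem:genMonCan}) the topological incarnation of the Stokes matrices of $\cM$ at each $p\in D_\mathrm{irr}$, while the $C_p$ encode the change of basis between the asymptotic sectorial bases and the generic flat basis on $X^*_{\beps}$, i.e.\ the connection matrices. Since these are exactly the data delivered by Corollary~\ref{prop:HLTmonodromyDescent}, the claim follows.

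There is essentially no obstacle left at this stage: all the technical work sits in Lemma~\ref{lemma:latticeE} (descent of a direct sum of exponentials on a sector) and Lemma~\ref{lemma:localSystemDescent} (descent of the local system part), both of which have already been assembled into Theorem~\ref{thm:HLTdescent}. The only point worth stating explicitly in the proof is the identification of the abstract generalized monodromy data of $\SolE{X}(\cM)$ with the classical Stokes and connection matrices of $\cM$, which is already implicit in Remark~\ref{rem:genMonCan} and in the description of $\SolE{X}(\cM)$ on small sectors via the Hukuhara--Turrittin decomposition recalled just before Proposition~\ref{prop:MeroConnNormalForm}.
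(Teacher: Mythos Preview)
Your proposal is correct and takes essentially the same approach as the paper: the paper simply states that the corollary follows directly from Corollary~\ref{prop:HLTmonodromyDescent}, using implicitly (as you spell out) that $\SolE{X}(\cM)$ is of meromorphic normal form by the discussion preceding Proposition~\ref{prop:MeroConnNormalForm}. Your additional unpacking of the identification of the matrices $\Sigma^p_j$ and $C_p$ with the classical Stokes and connection matrices is not in the paper's proof (it is relegated to Remark~\ref{rem:genMonCan}), but it is accurate and does no harm.
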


\begin{rem}
	This gives in particular an alternative proof of \cite[Theorem~5.4]{BHHS22}, and in a certain sense it is more natural since it does not leave the setting of enhanced ind-sheaves. (Another difference is that we do not pull back via the ramification before studying the Stokes matrices here.) In loc.~cit., we studied the special case of a hypergeometric system $\cM$, and the $K$-structure on $\SolE{X}(\cM)$ was used to derive properties of the Stokes filtration associated to $\cM$, from which we could then conclude the desired statement about the Stokes matrices.
	
	Going through the notion of Stokes filtration can indeed also be an approach in general: In \cite{DK23}, the authors give a functorial way of associating to an enhanced perverse ind-sheaf a Stokes-filtered local system (cf.\ \cite[Definition~4.1]{DK23} for a local version of such a functor). One could therefore derive a $K$-structure of the Stokes-filtered local system (in the sense of \cite{MocBetti}, for example) from a $K$-structure of the enhanced ind-sheaf and then conclude that the monodromy data extracted from the Stokes filtration can be defined over $K$. Knowing this relation between enhanced ind-sheaves and Stokes filtrations, one can argue that the statement of Corollary~\ref{cor:StokesMatDescent} is not entirely new. 
	Compared to the above ideas, our approach to the proof is, however, more direct and intrinsic to the theory of $\R$-constructible enhanced ind-sheaves, which is the main interest of this section.
	
	Lastly, let us remark that, more generally, $K$-structures for holonomic D-modules in any dimension have been studied in \cite{MocBetti}, and it seems reasonable to expect that a holonomic D-module has a $K$-structure in the sense of loc.~cit.\ if and only if $\SolE{X}(\cM)$ has a $K$-structure in our language.
\end{rem}

As a kind of upshot of all our considerations in this article, we get the following consequence of Theorem~\ref{prop:DREconj}, Theorem~\ref{thm:GaloisDescentEb} and Corollary~\ref{cor:StokesMatDescent}. Note, however, that our results from Sections~\ref{sec:GaloisDescentEb} and \ref{sec:Monodromy} are valid in greater generality and not restricted to the case of the field extension $\C/\R$. The results of the present section do not even require finite Galois extensions, while the results of Sections~\ref{sec:KashiwaraConj} and \ref{sec:GaloisDescentEb} are valid in any dimension.
\begin{cor}
	Let $X$ be a Riemann surface and let $\cM$ be a meromorphic connection with an isomorphism $\varphi\colon \cM\to c(\cM)$ such that $c(\varphi)\circ \varphi = \id_\cM$, then $\SolE{X}(\cM)$ has an $\R$-structure and $\cM$ admits generalized monodromy data with entries in $\R$.
\end{cor}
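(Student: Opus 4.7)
The strategy is to turn the D-module data $(\cM,\varphi)$ into an enhanced ind-sheaf together with an $\mathrm{Aut}(\C/\R)$-structure (a descent datum), apply the Galois descent equivalence of Section~\ref{sec:GaloisDescentEb}, and then invoke Corollary~\ref{cor:StokesMatDescent} for the second conclusion.

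\smallskip

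\emph{Step 1: Produce the enhanced ind-sheaf with a candidate descent datum.} I would apply $\SolE{X}$ to $\varphi\colon \cM \to c(\cM)$ and then compose with the canonical isomorphism
\[
	\SolE{X}(c(\cM)) \iso \overline{\SolE{X}(\cM)}
\]
supplied by Theorem~\ref{prop:DREconj}. This yields an isomorphism
\[
	\Phi\colon \SolE{X}(\cM) \overset{\sim}{\To} \overline{\SolE{X}(\cM)}
\]
in $\EbRcIC{X}$. Since $\cM$ is a meromorphic connection, Proposition~\ref{prop:MeroConnNormalForm} ensures that $\SolE{X}(\cM)$ is of meromorphic normal form, and in particular lies in $\EzRcIC{X}$.

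\smallskip

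\emph{Step 2: Check the cocycle condition.} For the Galois group $G=\mathrm{Aut}(\C/\R)=\{\id,\gamma\}$, a $G$-structure on $\SolE{X}(\cM)$ (in the sense of Section~\ref{sec:GaloisDescentEb}) amounts to a single isomorphism $\Phi\colon \SolE{X}(\cM)\to\overline{\SolE{X}(\cM)}$ satisfying $\overline{\Phi}\circ\Phi=\id$. By functoriality of $\SolE{X}$ and the naturality of the isomorphism in Theorem~\ref{prop:DREconj}, the morphism $\SolE{X}(c(\varphi))$ corresponds, under the identifications $\SolE{X}(c(\cM))\iso\overline{\SolE{X}(\cM)}$ and $\SolE{X}(c(c(\cM)))\iso \SolE{X}(\cM)$, to $\overline{\Phi}$. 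Hence the hypothesis $c(\varphi)\circ\varphi=\id_\cM$ translates, after applying $\SolE{X}$, to $\overline{\Phi}\circ\Phi=\id$, which is exactly the cocycle condition defining a $G$-structure. The only subtle point — and the main obstacle to write out carefully — is the verification that the isomorphism of functors in Theorem~\ref{prop:DREconj} is natural enough for this translation to be genuinely functorial in $\varphi$; this follows by tracking the construction of that isomorphism (via $C_X$ and $\D_X$) and applying it to the morphism of $\cD_X$-modules $c(\varphi)$.

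\smallskip

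\emph{Step 3: Descent and conclusion.} With $(\SolE{X}(\cM),\Phi)$ an object of $\EzRcIC{X}^G$, Theorem~\ref{thm:GaloisDescentEb} applied to the finite Galois extension $\C/\R$ produces $H_\R\in \EzRcIR{X}$ with $\pi^{-1}\C_X\otimes_{\pi^{-1}\R_X} H_\R \iso \SolE{X}(\cM)$ compatibly with the given $G$-structure. This is the desired $\R$-structure on $\SolE{X}(\cM)$. Finally, Corollary~\ref{cor:StokesMatDescent} — whose hypotheses are exactly this $\R$-structure, since $\SolE{X}(\cM)$ is of meromorphic normal form — yields that $\cM$ admits generalized monodromy data with entries in $\R$.
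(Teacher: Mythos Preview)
Your proposal is correct and follows essentially the same approach as the paper, which merely states the corollary as a direct consequence of Theorem~\ref{prop:DREconj}, Theorem~\ref{thm:GaloisDescentEb}, and Corollary~\ref{cor:StokesMatDescent} without writing out an explicit proof. One minor point to be careful about in writing this up: $\SolE{X}$ is contravariant, so $\SolE{X}(\varphi)$ goes from $\SolE{X}(c(\cM))$ to $\SolE{X}(\cM)$ rather than the other direction, but since all maps involved are isomorphisms this only affects the bookkeeping, not the validity of the argument.
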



\begin{thebibliography}{10}
	\bibitem[BHHS23]{BHHS22} D. Barco, M. Hien, A. Hohl and C. Sevenheck, \emph{Betti Structures of Hypergeometric Equations}, Int. Math. Res. Not. \textbf{2023} (2023), 10641--10701.
	\bibitem[BM88]{BM88} E. Bierstone and P. D. Milman, \emph{Semianalytic and subanalytic sets}, Publ. Math. Inst. Hautes Études Sci. {\textbf{67}} (1988), 5--42.
	\bibitem[Bjö93]{Bjo} J.-E. Björk, \emph{Analytic D-Modules and Applications}, Mathematics and Its Applications, vol. 247, Springer Science+Business Media, Dordrecht 1993.
	\bibitem[Boa01]{BoalchSMID} P. Boalch, \emph{Symplectic Manifolds and Isomonodromic Deformations}, Adv. Math. \textbf{163} (2001), 137--205.
	\bibitem[Boa21]{BoalchTop} P. Boalch, \emph{Topology of the Stokes phenomenon}, Proc. Symp. Pure Math. \textbf{103} (2021), 55--100.
	\bibitem[DK16]{DK16} A. D'Agnolo and M. Kashiwara, \emph{Riemann-Hilbert correspondence for holonomic D-modules}, Publ. Math. Inst. Hautes Études Sci. \textbf{123} (2016), 69--197.
	\bibitem[DK18]{DKmicrolocal} A. D'Agnolo and M. Kashiwara, \emph{A microlocal approach to the enhanced Fourier--Sato transform in dimension one}, Adv. Math. \textbf{339} (2018), 1--59.
	\bibitem[DK19]{DK19} A. D'Agnolo and M. Kashiwara, \emph{Enhanced perversities}, J. Reine Angew. Math. \textbf{751} (2019), 185--241.
	\bibitem[DK21]{DKsh} A. D'Agnolo and M. Kashiwara, \emph{On a topological counterpart of regularization for holonomic D-modules}, J. Éc. polytech. Math. \textbf{8} (2021), 27--55.
	\bibitem[DK23]{DK23} A. D'Agnolo and M. Kashiwara, \emph{Enhanced nearby and vanishing cycles in dimension one and Fourier transform}, Publ. Res. Inst. Math. Sci. \textbf{59} (2023), 543--570.
	\bibitem[Dim04]{Dimca} A. Dimca, \emph{Sheaves in Topology}, Universitext, Springer, Berlin 2004.
	\bibitem[Hoh20]{HoDiss} A. Hohl, \emph{D-Modules of Pure Gaussian Type from the Viewpoint of Enhanced Ind-Sheaves}, Dissertation, Universität Augsburg, 2020.\\
	Available at \mbox{\url{https://opus.bibliothek.uni-augsburg.de/opus4/79690}}
	\bibitem[Hoh23]{Ho23GalShv} A. Hohl, \emph{An introduction to field extensions and Galois descent for sheaves of vector spaces}, Preprint (2023), \texttt{arXiv:2302.14837v2}.
	\bibitem[HS23]{HS23} A. Hohl and P. Schapira, \emph{Unusual functorialities for weakly constructible sheaves}, Preprint (2023), \texttt{arXiv:2303.11189}.
	\bibitem[Hör98]{Hor98} L. Hörmander, \emph{The Analysis of Linear Partial Differential Operators I}, Grundlehren Math. Wiss., vol 256, Springer, Berlin 1998.
	\bibitem[HTT08]{HTT} R. Hotta, K. Takeuchi and T. Tanisaki, \emph{$D$-Modules, Perverse Sheaves, and Representation Theory}, Progr. Math., vol. 236, Birkhäuser, Boston 2008.
	\bibitem[Ito20]{Ito} Y. Ito, \emph{$\mathbb{C}$-constructible enhanced ind-sheaves}, Tsukuba J. Math. \textbf{44} (2020), 155--201.
	\bibitem[IT20]{IT20} Y. Ito and K. Takeuchi, \emph{On irregularities of Fourier transforms of regular holonomic $\cD$-modules}, Adv. Math. \textbf{366}, 107093.
	\bibitem[Kas84]{KasRHreg} M. Kashiwara, \emph{The Riemann-Hilbert Problem for Holonomic Systems}, Publ. Res. Inst. Math. Sci. \textbf{20} (1984), 319--365.
	\bibitem[Kas86]{KasConj} M. Kashiwara, \emph{Regular Holonomic \textit{D}-modules and Distributions on Complex Manifolds}, Adv. Stud. Pure Math. \textbf{8} (1986), 199--206.
	\bibitem[Kas95]{KasMThesis} M. Kashiwara, \emph{Algebraic study of systems of partial differential equations. (Master’s thesis, Tokyo University, December 1970. Translated by Andrea D’Agnolo and Pierre Schneiders. With a foreword by Pierre Schapira)}, Mém. Soc. Math. Fr. (N.S.) \textbf{63} (1995).
	\bibitem[Kas03]{KasDmod} M. Kashiwara, \emph{$D$-Modules and Microlocal Calculus}, Transl. Math. Monogr., vol. 217, Amer. Math. Soc., Providence, RI 2003.
	\bibitem[KS90]{KS90} M. Kashiwara and P. Schapira, \emph{Sheaves on Manifolds}, Grundlehren Math. Wiss., vol. 292, Springer, Berlin 1990.
	\bibitem[KS96]{KS96} M. Kashiwara and P. Schapira, \emph{Moderate and formal cohomology associated with constructible sheaves}, Mém. Soc. Math. Fr. (N.S.) \textbf{64} (1996).
	\bibitem[KS01]{KS01} M. Kashiwara and P. Schapira, \emph{Ind-Sheaves}, Astérisque \textbf{271} (2001).
	\bibitem[KS16]{KS16} M. Kashiwara and P. Schapira,, \emph{Regular and Irregular Holonomic D-modules}, London Math. Soc. Lecture Note Ser., vol. 433, Cambridge University Press, Cambridge 2016.
	\bibitem[Ked10]{Ked1} K. S. Kedlaya, \emph{Good formal structures for flat meromorphic connections, I: surfaces}, Duke Math. J. \textbf{154} (2010), 343--418.
	\bibitem[Ked11]{Ked2} K. S. Kedlaya, \emph{Good formal structures for flat meromorphic connections, II: excellent schemes}, J. Amer. Math. Soc. \textbf{24} (2011), 183--229.
	\bibitem[Maj84]{Maj} H. Majima, \emph{Asymptotic Analysis for Integrable Connections with Irregular Singular Points}, Lecture Notes in Math., vol. 1075, Springer, Berlin 1984.
	\bibitem[Mal91]{Mal91} B. Malgrange, \emph{Équations Différentielles à Coefficients Polynomiaux}, Progr. Math., vol. 96, Birkhäuser, Boston 1991.
	\bibitem[Moc09]{Moc1} T. Mochizuki, \emph{Good formal structure for meromorphic flat connections on smooth projective surfaces}, in: \emph{Algebraic Analysis and Around}, Adv. Stud. Pure Math., vol. 54, pp. 223–253, Math. Soc. Japan, Tokyo 2009.
	\bibitem[Moc11a]{Moc2} T. Mochizuki, \emph{Wild harmonic bundles and wild pure twistor D-modules}, Astérisque \textbf{340} (2011).
	\bibitem[Moc11b]{MocStokesMero} T. Mochizuki, \emph{The Stokes structure of a good meromorphic flat bundle}, J. Inst. Math. Jussieu \textbf{10} (2011), 675--712.
	\bibitem[Moc14]{MocBetti} T. Mochizuki, \emph{Holonomic D-modules with Betti stucture}, Mém. Soc. Math. France (N.S.) \textbf{138--139} (2014).
	\bibitem[Moc15]{MocMT} T. Mochizuki, \emph{Mixed Twistor \textit{D}-Modules}, Lecture Notes in Math., vol. 2125, Springer, Cham 2015.
	\bibitem[Moc22]{MocCurveTest2} T. Mochizuki, \emph{Curve test for enhanced ind-sheaves and holonomic $D$-modules, II}, Ann. Sci. École Norm. Sup. (4) \textbf{55} (2022), 681--738.
	\bibitem[Pre08]{Pre} L. Prelli, \emph{Sheaves on Subanalytic Sites}, Rend. Sem. Mat. Univ. Padova \textbf{120} (2008), 167--216.
	\bibitem[Sab00]{SabAst} C. Sabbah, \emph{Équations différentielles à points singuliers irrégulièrs et phénomène de Stokes en dimension 2}, Astérisque \textbf{263} (2000).
	\bibitem[Sab13]{SabStokes} C. Sabbah, \emph{Introduction to Stokes Structures}, Lecture Notes in Math., vol. 2060, Springer, Berlin 2013.
	\bibitem[Sch23]{Sch23} P. Schapira, \emph{Constructible sheaves and functions up to infinity}, J. Appl. Comput. Topol. \textbf{7} (2023), 707--739.
	\bibitem[Was65]{Was65} W. Wasow, \emph{Asymptotic Expansions for Ordinary Differential Equations}, Interscience Publishers, New York 1965.
\end{thebibliography}
\end{document}